\newsavebox\myboxA
\newsavebox\myboxB
\newlength\mylenA
\newcommand*\xoverline[2][0.75]{%
	\sbox{\myboxA}{$\m@th#2$}%
	\setbox\myboxB\null
	\ht\myboxB=\ht\myboxA%
	\dp\myboxB=\dp\myboxA%
	\wd\myboxB=#1\wd\myboxA
	\sbox\myboxB{$\m@th\overline{\copy\myboxB}$}
	\setlength\mylenA{\the\wd\myboxA}
	\addtolength\mylenA{-\the\wd\myboxB}%
	\ifdim\wd\myboxB<\wd\myboxA%
	\rlap{\hskip 0.5\mylenA\usebox\myboxB}{\usebox\myboxA}%
	\else
	\hskip -0.5\mylenA\rlap{\usebox\myboxA}{\hskip 0.5\mylenA\usebox\myboxB}%
	\fi}
\algnewcommand{\algorithmicgoto}{\textbf{go to}}%
\algnewcommand{\Goto}[1]{\algorithmicgoto~\ref{#1}}%
\definecolor{mycolor1}{RGB}{237,227,135}
\definecolor{mycolor2}{RGB}{237,237,237}
\definecolor{mycolor3}{RGB}{59,32,12}
\definecolor{mycolor4}{RGB}{222,129,0}
\definecolor{mycolor5}{RGB}{204,252,98}
\newtheorem{theorem}{Theorem}[section]
\newtheorem{lemma}[theorem]{Lemma}
\newtheorem{proposition}[theorem]{Proposition}
\theoremstyle{definition}
\newtheorem{definition}[theorem]{Definition}
\newtheorem{corollary}[theorem]{Corollary}
\theoremstyle{remark}
\newtheorem{remark}[theorem]{Remark}
\numberwithin{equation}{section}
\newcommand{\Z}{\mathbb{Z}}
\newcommand{\R}{\mathbb{R}}
\newcommand{\Brackets}[1]{\left( #1 \right)}
\newcommand{\SquareBrackets}[1]{\left[ #1\right]}
\newcommand{\Braces}[1]{\left\{ #1\right\}}
\newcommand{\FuncAction}[2]{\left\langle #1,#2 \right\rangle}
\newcommand{\Norm}[1]{\left\lVert #1 \right\rVert}
\newcommand{\Seminorm}[1]{\left\lvert #1 \right\rvert}
\newcommand{\dx}{\mathrm{d}}
\newcommand{\overbar}[1]{\mkern 1.5mu\overline{\mkern-1.5mu#1\mkern-1.5mu}\mkern 1.5mu} 
\newcommand{\Div}{\mathrm{div}}
\newcommand{\Supp}{\mathrm{supp}}
\newcommand{\Sop}{\mathbf{S}}
\title{A First-order Two-scale Analysis for Contact Problems with Small Periodic Configurations}
\author{Changqing Ye\footnote{LSEC, ICMSEC, Academy of Mathematics and Systems Science, Chinese Academy of Sciences, Beijing 100190, China, and School of Mathematical Sciences, University of Chinese Academy of Sciences, Beijing 100049, China}, Junzhi Cui\footnote{LSEC, ICMSEC, Academy of Mathematics and Systems Science, Chinese Academy of Sciences, Beijing 100190, China}}
\date{\today \footnote{Version 0.6}}
\begin{document}
\maketitle
\begin{abstract}
	This paper is devoted to studying a type of contact problems modeled by hemivariational inequalities with small periodic coefficients appearing in PDEs, and the PDEs we considered are linear, second order and uniformly elliptic. Under the assumptions, it is proved that the original problem can be homogenized, and the solution weakly converges. We derive an $O(\epsilon^{1/2})$ estimation which is pivotal in building the computational framework. We also show that Robin problems--- a special case of contact problems, it leads to an $O(\epsilon)$ estimation in $L^2$ norm. Our computational framework is based on finite element methods, and the numerical analysis is given, together with experiments to convince the estimation.
\end{abstract}

\section{Introduction}
In composite material design and performance optimization, the controlling PDEs within the models frequently involve small periodic coefficients (e.g., \cite{Yang2017,Dong2018,Liu2013,Liu2015}). For those problems, the periodic homogenization theory is the basis, and many PDE experts contributed considerable works to build this theory. For examples: qualitative results such as asymptotic expansion \cite{Bensoussan1978,Oleuinik1992}, H-convergence \cite{Tartar2010}, $\Gamma$-convergence \cite{DalMaso1993}, and two-scale convergence \cite{Allaire1992}; regularity results such as compactness property investigated by M. Avellaneda and Lin in \cite{Avellaneda1987}, and recently a thorough study for Neumann boundary condition by Shen et al. \cite{Kenig2013,Shen2016}. As a model problem for the multiscale phenomenon, it also attracts great attention among scientific computation community. Due to the high oscillation emerging in the solution, classical computational method, such as finite element methods (FEM) can not reveal the fine scale information. Because of strong practical background, several modern multiscale computation methods have been developed since 1990s. We can classify those as three groups in methodology: modify FEM piecewise polynomial basis to enhance the expression ability, such as MsFEM \cite{Hou1999,Efendiev2000,Chen2002} and LOD \cite{Malqvist2014,Henning2014}; utilize scale separation property to decompose original solution into coarsen and fine parts, VMM \cite{Hughes1995,Brezzi1997,Larson2007}; improve the accuracy of homogenized solution by involving smallscale information, HMM \cite{Weinan2005,Abdulle2012}. In those methods Dirichlet problem is chosen when conducting numerical experiments and error analysis, while Robin problem or more general contact problems are scarcely investigated.

The notion of hemivariational inequalities was first introduced by Panagiotopoulos in the early 1980s \cite{Panagiotopoulos1983}. Since then, hemivariational inequalities receive broad applications in nonsmooth mechanics, contact mechanics, physics, and economics \cite{Naniewicz1995,Panagiotopoulos1993,Migorski2013,Goeleven2003}. In this paper, we focus on boundary hemivariational inequality problems, which originate from the mathematical model of elastic contact system. To solve this kind of hemivariational inequalities, a finite element method had been implemented \cite{Haslinger1999} while the thorough numerical analysis has not been established until recent. In \cite{Han2017}, Han et al. derived a C\'{e}a's inequality in an abstract framework, and figure out the influence of solution's regularity to the numerical computation. To our knowledge, proper assumptions will balance the solvability and generality of mathematical models, and this is extremely important in nonlinear problems. Hence, we adopt the assumptions in \cite{Han2017} to prove our main results.

To our knowledge the study on hemivariational inequalities with the coefficients setting in small periodic configurations is few. The homogenization result could be found in \cite{Liu2008}. However, the result or H-convergence property does not provide a priori convergence rate which is pivotal in numerical analysis. It explains why we need to build an $O(\epsilon^{1/2})$ estimation. In the following sections, we set model problem in a scalar form merely for the simplicity of symbols, and the extension to elastic system will be straight.
  
The rest of paper is organized as follows. In \cref{sec:preliminaries} we introduce notations, review some preliminary materials including generalized directional derivative, and state the model problem and assumptions for later proof. In \cref{sec:homogenization}, firstly, we prove a uniform bound for solutions which is missing in \cite{Liu2008}, and we think it is indispensable. Then we apply div-curl lemma to prove the homogenization result, the proof will also be provided for the self-containing. We give an $O(\epsilon^{1/2})$ estimation for first order asymptotic expansion in \cref{sec:estimation}, and the insight most comes from \cite{Shen2016}. We discuss Robin problem in \cref{sec:robin}, and show that with the duality technique from \cite{Shen2016} the difference between original and homogenized solutions in $L^2$ norm is $O(\epsilon)$. A computational framework based on finite element methods will be presented in \cref{sec:computation} together with its numerical analysis.  Experiments are reported in \cref{sec:experiments}, and the results are in good agreement with predicted estimation.

\section{Preliminaries}
\label{sec:preliminaries}
Generally, when $X$ is used, it denotes a real Banach space with its norm as $\Norm{\cdot}_X$, $X^*$ as its topological dual, $\FuncAction{\cdot}{\cdot}_{X^*\times X}$ as duality pairing. Without confusion, we omit the subscript and simply write $\FuncAction{\cdot}{\cdot}$. Weak convergence is indicated by $\rightharpoonup$. Given two normed spaces $X$ and $Y$, $\mathcal{L}(X, Y)$ is the space of all linear continuous operators from $X$ to $Y$. 

Notation $d$ is always used as the space dimension. In the full text, the Einstein summation convention is adopted, means summing repeated indexes from $1$ to $d$. Without specification, $\Omega$ is a domain (open and bounded set in $\R^d$) with Lipschitz boundary $\Gamma$, and denote $\bm{n}$ as the outward unit normal to $\Gamma$. The Sobolev spaces $W^{k,p}$ and $H^{k}$ are defined as usual (see \cite{Brenner2008}) and we abbreviate the norm and seminorm of Sobolev space $H^k(\Omega)$ as $\Norm{ \cdot}_{k,\Omega}$ and $\Seminorm{\cdot}_{k,\Omega}$. 

To specify conditions respectively on the different parts of boundary, we rewrite $\Gamma= \xoverline{\Gamma_D} \cup \xoverline{\Gamma_N} \cup \xoverline{\Gamma_C}$, $\Gamma_D$, $\Gamma_N$ and $\Gamma_C$ are open according to the inheriting topology on $\Gamma$ and disjoint with each other, $\Gamma_C \neq \varnothing$ and $\Gamma_D \neq \varnothing$ without specification. We mainly concern functional space $V$ which its functions $u \in H^1(\Omega)$ and vanishing on $\Gamma_D$ in the sense of trace, and one can easily check that $V$ is Hilbertian, and the norm can be legally set as $\Norm{\cdot}_V=\Seminorm{\cdot}_{1,\Omega}$ when $\Gamma_D \neq \varnothing$. Following the notations in \cite{Han2017}, we denote $V_j=L^2(\Gamma_C)$ as the main space for hemivariational inequality and $\gamma_j \in \mathcal{L}(V,V_j)$ as trace operator from $V$ to $V_j$. We point here that the split $\Gamma= \xoverline{\Gamma_D} \cup \xoverline{\Gamma_N} \cup \xoverline{\Gamma_C}$ must be regular enough to guarantee that $\gamma_j$ is compact, and normally it is true because that fractional Sobolev space $H^{1/2}(\Gamma_C)$ is compactly embedded into $L^2(\Gamma_C)$ (refer \cite{Nezza2012} for more details).

To describe the periodic structure, we denote $Q=(-1/2, 1/2)^d$ as a representative cell, and call a function $f$ 1-periodic, it means:
\[
f(\bm{x}+\bm{z}) = f(\bm{x}) ~~~~ \forall \bm{x} \in \R^d \text{ and } \forall \bm{z} \in \Z^d ,
\] 
and we also use a superscript $\epsilon$ for $f(\bm{x})$ to represent scaling $f^\epsilon(\bm{x})\coloneqq f(\bm{x}/\epsilon)$ if $f$ is 1-periodic. $H^k_\sharp(Q)$ or $W^{k,p}_\sharp(Q)$ with "$\sharp$" means this functional space is the completion of smooth 1-periodic functions with respect to the $H^k(Q)$ or $W^{k,p}(Q)$ norm. We have a fundamental theorem for $f^\epsilon$:
\begin{theorem}[see \cite{Cioranescu1999} Theorem 2.6]\label{thm:oscillating converge}
	Let $1\leq p < \infty$,  $\forall f \in L^p_\sharp(Q)$, then $f^\epsilon \rightharpoonup \mathcal{M}_Q f= 1/\Seminorm{Q}\int_Q f$ in $L^p(\omega)$. Here $\omega$ is an arbitrary bounded open subset in $\R^d$. 
\end{theorem}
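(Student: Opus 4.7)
My strategy is to combine a uniform $L^p$ bound on the family $\{f^\epsilon\}_{\epsilon > 0}$ with convergence against test functions from a dense subspace, using the fact that weak convergence in $L^p(\omega)$ reduces to checking integrals against a dense subset of $L^{p'}(\omega)$ once boundedness is in hand.

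First I would establish the uniform bound. Enclose the bounded set $\omega$ in a cube $(-R, R)^d$ and decompose $\R^d$ into the $\epsilon$-scaled translates $C_{\bm{k}} \coloneqq \epsilon(\bm{k} + Q)$, $\bm{k} \in \Z^d$. The change of variable $\bm{y} = \bm{x}/\epsilon$ together with 1-periodicity of $f$ yields
\[
\int_\omega \Seminorm{f^\epsilon(\bm{x})}^p \, \dx\bm{x} \;\le\; \sum_{\bm{k}: C_{\bm{k}} \cap (-R,R)^d \neq \varnothing} \epsilon^d \int_Q \Seminorm{f(\bm{y})}^p \, \dx\bm{y} \;\le\; C(R) \Norm{f}_{L^p_\sharp(Q)}^p ,
\]
since the number of relevant cells is $O(\epsilon^{-d})$; hence $\{f^\epsilon\}$ is bounded in $L^p(\omega)$ uniformly in $\epsilon$.

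Second, by density of $C_c^\infty(\omega)$ in $L^{p'}(\omega)$ and the uniform bound, it suffices to verify $\int_\omega f^\epsilon g \, \dx\bm{x} \to \mathcal{M}_Q f \int_\omega g \, \dx\bm{x}$ for each fixed $g \in C_c^\infty(\omega)$. Define the piecewise-constant approximation $g_\epsilon(\bm{x}) \coloneqq g(\epsilon \bm{k})$ for $\bm{x} \in C_{\bm{k}}$. Uniform continuity of $g$ gives $\Norm{g - g_\epsilon}_{L^\infty(\omega)} \to 0$, and combined with the uniform $L^1$ bound from the previous step this forces $\int_\omega f^\epsilon (g - g_\epsilon) \, \dx\bm{x} \to 0$. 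For $\epsilon$ small enough that $\Supp g$ is separated from $\partial \omega$, the remaining sum involves only complete cells and is exact:
\[
\int_\omega f^\epsilon g_\epsilon \, \dx\bm{x} \;=\; \sum_{\bm{k}} g(\epsilon \bm{k}) \int_{C_{\bm{k}}} f^\epsilon \, \dx\bm{x} \;=\; \mathcal{M}_Q f \sum_{\bm{k}} g(\epsilon \bm{k}) \, \epsilon^d ,
\]
using the key identity $\int_{C_{\bm{k}}} f^\epsilon \, \dx\bm{x} = \epsilon^d \int_Q f \, \dx\bm{y} = \Seminorm{C_{\bm{k}}} \mathcal{M}_Q f$ supplied by periodicity. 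The Riemann sum on the right converges to $\int_\omega g \, \dx\bm{x}$, completing the identification on the dense class.

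A standard three-epsilon argument then extends convergence from $C_c^\infty(\omega)$ to all of $L^{p'}(\omega)$ and yields $f^\epsilon \rightharpoonup \mathcal{M}_Q f$. The main technical point is the exact cancellation on each complete $\epsilon$-cell stemming from periodicity, which is what makes the Riemann-sum reduction work without any additional regularity on $f$; everything else is a routine change of variables or density swap.
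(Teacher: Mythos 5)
The paper does not include a proof of this statement; it is imported verbatim from Cioranescu--Donato (\cite{Cioranescu1999}, Theorem~2.6), so there is no in-paper argument to compare against. Your approach---uniform $L^p$ bound by counting $\epsilon$-cells, reduction to a dense class of test functions, exact periodic averaging over complete cells, Riemann-sum identification of the limit---is the standard elementary route for this result and is correct for $1 < p < \infty$.

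The statement, however, includes $p = 1$, and there your density step fails: weak convergence in $L^1(\omega)$ requires $\int_\omega f^\epsilon g \to \mathcal{M}_Qf\int_\omega g$ for every $g \in L^\infty(\omega)$, and $C_c^\infty(\omega)$ is \emph{not} dense in $L^\infty(\omega)$, so the three-epsilon transfer does not close this case. One repair: truncate $f = f\mathbbm{1}_{\{|f|\le M\}} + f\mathbbm{1}_{\{|f|>M\}}$. The truncated part lies in $L^\infty_\sharp(Q) \subset L^2_\sharp(Q)$, so your argument with $p=2$ applies to it, and $L^\infty(\omega)\subset L^2(\omega)$ lets you pair with $g$; the tail is controlled uniformly in $\epsilon$ by your own cell-counting bound,
\[
\int_\omega \bigl|(f\mathbbm{1}_{\{|f|>M\}})^\epsilon\bigr| \le C(R)\,\Norm{f\mathbbm{1}_{\{|f|>M\}}}_{L^1(Q)},
\]
which tends to zero as $M\to\infty$, as does the induced error in $\mathcal{M}_Q f$. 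Sending $M\to\infty$ after $\epsilon\to 0$ then finishes. (Equivalently, prove equi-integrability of $\{f^\epsilon\}$ by the same truncation and invoke Dunford--Pettis to upgrade the distributional identification to weak $L^1$ convergence.) As written, your proof establishes the claim only for $p>1$; that is harmless for this paper, which only invokes the theorem with $p=2$, but it does not cover the full range stated.
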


It is customary to write $C$ as a positive constant, and $C(p_1,\cdots,p_n)$ indicates that $C$ depends on $p_1,\cdots,p_n$. 

Then we introduce (Clarke) generalized directional derivative and subdifferential (see \cite{Denkowski2003}).
\begin{definition}
	Let $\varphi: X \rightarrow \R$ be a locally Lipschitz function. For $x, h \in X$, the \emph{generalized directional derivative} of $\varphi$ at $x$ along the direction $h$, denoted by $\varphi^0(x; h)$ is defined by
	\[
	\varphi^0(x; h) \coloneqq \limsup_{y\rightarrow x, \lambda \downarrow 0} \frac{\varphi(y+\lambda h)-\varphi(y)}{\lambda} = \inf_{\epsilon, \delta>0} \sup_{\substack{\Norm{x-y}_X<\epsilon \\ 0<\lambda< \delta}} \frac{\varphi(y+\lambda h)-\varphi(y)}{\lambda} .
	\]
	The \emph{generalized subdifferential} of $\varphi$ at $x \in X$, is the nonempty set $\partial \varphi (x) \subset X^*$ defined by
	\[
	\partial \varphi(x) \coloneqq \{ x^* \in X^*: \FuncAction{x^*}{h} \leq \varphi^0(x; h), \forall h \in X \}.
	\] 
\end{definition}

From now on, the matrix function 
\[
A^\epsilon(\bm{x})=\SquareBrackets{A^\epsilon_{ij}(\bm{x})}_{1\leq i, j \leq d}=A(\bm{x}/\epsilon)=\SquareBrackets{A_{ij}(\frac{\bm{x}}{\epsilon})}_{1\leq i, j \leq d}
\]
serves as the coefficients in our PDE model. The scale parameter $\epsilon \ll 1$. Also provide $f \in V^*$, $g \in L^2(\Gamma_N)$, and $j: V_j \rightarrow \R$ is a locally Lipschitz function. Now we can formulate our contact problem, 
\begin{equation}\label{eq:contact problem}
\left\{
\begin{aligned}
-\Div(A^\epsilon(\bm{x})\nabla u_\epsilon) = f ~~~~ &\text{in} ~~ \Omega \\
u_\epsilon = 0 ~~~~ &\text{on} ~~ \Gamma_D \\
\bm{n} \cdot A^\epsilon \nabla u_\epsilon = g ~~~~ &\text{on} ~~ \Gamma_N \\
-\bm{n} \cdot A^\epsilon \nabla u_\epsilon \in \partial j(\gamma_j u_\epsilon) ~~~~ &\text{on} ~~ \Gamma_C
\end{aligned}	
\right. ,
\end{equation}
and its hemivariational form:
\begin{equation}\label{eq:contact problem hemiform}
\left\{
\begin{aligned}
&\text{Find } u_\epsilon \in V, \text{ s.t. } \forall v \in V \\
& \int_\Omega A^\epsilon \nabla u_\epsilon \cdot \nabla v + j^0(\gamma_j u_\epsilon; \gamma_j v) \geq \FuncAction{f}{v}+\int_{\Gamma_N} gv 
\end{aligned}
\right. .
\end{equation}
We mention that $v \mapsto \int_{\Gamma_N} gv$ is a bounded functional on $V$ since $g \in L^2(\Gamma_N)$, one can rewrite $\FuncAction{\tilde{f}}{v}\coloneqq \FuncAction{f}{v}+\int_{\Gamma_N} gv$. It is clear that $\Norm{\tilde{f}}_{V^*} \leq \Norm{f}_{V^*}+c_k \Norm{g}_{0,\Gamma_N}$, where $c_k$ equals the trace operator norm from $V$ to $L^2(\Gamma_N)$.

To make this hemivariational form solvable, we need the following assumptions: 
\begin{itemize}
	\item[\textbf{A}:] The coefficient matrix $A(y)$ is symmetric and uniformly elliptic: 
	\begin{equation}\label{ass:A}
	\begin{aligned}
	& A_{ij}(\bm{y}) = A_{ji}(\bm{y}) \\
	& \kappa_1 \Seminorm{\xi}^2 \leq A_{ij}(\bm{y})\xi_i \xi_j \leq \kappa_2 \Seminorm{\xi}^2 ~~~~ \text{for a.e. } \bm{y} \in \R^d \text{ and } \xi \in \R^d
	\end{aligned} ~ .
	\end{equation}
	\item[\textbf{B}:] There exist constants $c_0,c_1,\alpha_j$, such that:
	\begin{equation}\label{ass:B1}
	\Norm{x^*}_{V_j^*} \leq c_0+c_1\Norm{x}_{V_j} \forall x \in V_j, \forall x^* \in \partial j(x) ,
	\end{equation}
	\begin{equation}\label{ass:B2}
	j^0(x_1; x_2-x_1)+j^0(x_2; x_1-x_2) \leq \alpha_j \Norm{x_1-x_2}_{V_j}^2 \forall x_1, x_2 \in V_j .
	\end{equation}
	\item[\textbf{C}:] Let $c_j=\Norm{\gamma_j}_{V\rightarrow V_j}$ be the operator norm, there exists that, 
	\begin{equation}\label{ass:C}
	\Delta \coloneqq \kappa_1 - \alpha_j c_j^2 > 0 .
	\end{equation}
\end{itemize}

We remark here that assumptions \textbf{B} and \textbf{C} follow \cite{Han2017}. Then by utilizing the framework constructed in \cite{Han2017} following theorem is obvious:
\begin{theorem}
	The solution of problem \cref{eq:contact problem hemiform} exists and is unique.
\end{theorem}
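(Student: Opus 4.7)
The plan is to reduce \cref{eq:contact problem hemiform} to the abstract hemivariational inequality framework of \cite{Han2017} and then simply verify that assumptions \textbf{A}--\textbf{C} match the hypotheses there. Define the bilinear form $a^\epsilon(u,v)\coloneqq \int_\Omega A^\epsilon \nabla u\cdot \nabla v$ on $V\times V$. By \textbf{A}, $a^\epsilon$ is symmetric, bounded with constant $\kappa_2$, and coercive with constant $\kappa_1$, uniformly in $\epsilon$. Set $J\coloneqq j\circ \gamma_j:V\to\R$; since $\gamma_j\in\mathcal{L}(V,V_j)$ and $j$ is locally Lipschitz on $V_j$, $J$ is locally Lipschitz on $V$ and by the chain rule for Clarke's subdifferential $J^0(u;v)\le j^0(\gamma_j u;\gamma_j v)$ and $\partial J(u)\subset \gamma_j^*\partial j(\gamma_j u)$. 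Thus \cref{eq:contact problem hemiform} is the problem of finding $u_\epsilon\in V$ with $a^\epsilon(u_\epsilon,v)+j^0(\gamma_j u_\epsilon;\gamma_j v)\ge \FuncAction{\tilde f}{v}$ for all $v\in V$.

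For \emph{uniqueness}, which is the cleanest step, I would suppose $u_1,u_2\in V$ are two solutions, take $v=u_2$ in the inequality for $u_1$ and $v=u_1$ in the inequality for $u_2$, and add. The right-hand sides cancel, and after rearranging I obtain
\[
a^\epsilon(u_1-u_2,u_1-u_2) \leq j^0(\gamma_j u_1;\gamma_j u_2-\gamma_j u_1) + j^0(\gamma_j u_2;\gamma_j u_1-\gamma_j u_2).
\]
Applying \textbf{A} to the left-hand side and \cref{ass:B2} to the right-hand side yields
\[
\kappa_1 \Norm{u_1-u_2}_V^2 \leq \alpha_j \Norm{\gamma_j u_1-\gamma_j u_2}_{V_j}^2 \leq \alpha_j c_j^2 \Norm{u_1-u_2}_V^2,
\]
so $\Delta\,\Norm{u_1-u_2}_V^2\le 0$ and $u_1=u_2$ by \textbf{C}.

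For \emph{existence}, I would cast the problem as the multivalued inclusion $\tilde f\in \mathcal{A}^\epsilon u_\epsilon + \gamma_j^*\partial j(\gamma_j u_\epsilon)$ in $V^*$, where $\mathcal{A}^\epsilon\in \mathcal{L}(V,V^*)$ is the Riesz-type operator induced by $a^\epsilon$, and invoke a surjectivity theorem for pseudomonotone, coercive multivalued operators on reflexive Banach spaces (as in \cite{Han2017} or the monograph tradition of Naniewicz--Panagiotopoulos). Boundedness and coercivity of the sum operator follow from \textbf{A} and the growth bound \cref{ass:B1} together with \cref{ass:C}: indeed
\[
\FuncAction{\mathcal{A}^\epsilon u+\gamma_j^*\eta}{u} \geq \kappa_1\Norm{u}_V^2 - c_j\Brackets{c_0+c_1 c_j\Norm{u}_V}\Norm{u}_V
\]
for any $\eta\in\partial j(\gamma_j u)$, which is coercive in $V$. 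Pseudomonotonicity of $u\mapsto \gamma_j^*\partial j(\gamma_j u)$ hinges on compactness of $\gamma_j:V\to V_j$, which has been ensured in the preliminaries via the compact embedding $H^{1/2}(\Gamma_C)\hookrightarrow L^2(\Gamma_C)$; together with the growth bound this gives upper semicontinuity from $V_{\text{weak}}$ into $V^*_{\text{strong}}$, and hence the sum with the continuous linear $\mathcal{A}^\epsilon$ is pseudomonotone.

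The main obstacle I expect is not the algebra of the uniqueness argument but the verification of pseudomonotonicity of $\gamma_j^*\partial j(\gamma_j \cdot)$; this is where one must carefully combine the compactness of $\gamma_j$ with the upper semicontinuity of Clarke's subdifferential and the linear growth \cref{ass:B1}, and where the abstract machinery of \cite{Han2017} actually does the work. In the write-up I would therefore be explicit in identifying $(a^\epsilon,J,\tilde f)$ with the corresponding objects in \cite{Han2017} and simply cite their existence/uniqueness theorem, only spelling out the uniqueness computation above, since it is the part that directly exposes the role of the sharpness condition $\Delta>0$.
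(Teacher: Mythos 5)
Your approach matches the paper's exactly: the paper offers no explicit proof and simply states that the theorem follows from the framework of \cite{Han2017} once assumptions \textbf{B} and \textbf{C} are seen to coincide with the hypotheses there. Spelling out the uniqueness computation is a reasonable addition. Note a small slip in the write-up: you should test with $v=u_2-u_1$ in the inequality for $u_1$ and $v=u_1-u_2$ in the inequality for $u_2$ (not $v=u_2$ and $v=u_1$), so that the right-hand sides cancel as you claim; the displayed inequality you then obtain is the correct one, and the conclusion $u_1=u_2$ is right.

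There is, however, one substantive error in your existence sketch: the coercivity bound. From the growth condition \cref{ass:B1} you derive
\[
\FuncAction{\mathcal{A}^\epsilon u+\gamma_j^*\eta}{u} \geq \kappa_1\Norm{u}_V^2 - c_j\Brackets{c_0+c_1 c_j\Norm{u}_V}\Norm{u}_V ,
\]
whose quadratic coefficient is $\kappa_1 - c_1 c_j^2$. Nothing in \textbf{A}--\textbf{C} makes this positive: assumption \cref{ass:C} controls $\kappa_1-\alpha_j c_j^2$, and the constant $c_1$ from \cref{ass:B1} need not be comparable to $\alpha_j$. The correct route is the same as in the paper's subsequent uniform-bound lemma: use the relaxed-monotonicity condition \cref{ass:B2} with $x_2=0$. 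For $\eta\in\partial j(\gamma_j u)$ one has $\FuncAction{\eta}{\gamma_j u}\geq -j^0(\gamma_j u;-\gamma_j u)\geq -\alpha_j\Norm{\gamma_j u}_{V_j}^2+j^0(0;\gamma_j u)\geq -\alpha_j c_j^2\Norm{u}_V^2-c_0 c_j\Norm{u}_V$, which yields
\[
\FuncAction{\mathcal{A}^\epsilon u+\gamma_j^*\eta}{u} \geq \Delta\Norm{u}_V^2 - c_0 c_j\Norm{u}_V ,
\]
and this is coercive precisely because $\Delta>0$. The growth bound \cref{ass:B1} serves for boundedness of the multivalued term, not for coercivity; as written your sketch would require an extra, unstated hypothesis on $c_1$.
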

\begin{remark}
	The assumptions in former work \cite{Liu2008} are sightly different with \cite{Han2017}. For example, \cite{Liu2008} needs $j^0_N(x, r; -r) \leq d_N(1+\Seminorm{r})$ which is redundant in our case. Hence, we think the assumptions in \cite{Han2017} are more reasonable.
\end{remark}

We close this section by illustrating a specific contact problem. Here \cref{fig:Rubin} describes a domain $\Omega$ with its boundary $\Gamma$ composed by $\Gamma_D, \Gamma_N, \Gamma_{C'}, \Gamma_{C^{\prime\prime}}$. On $\Gamma_D$, we have $u=0$; On $\Gamma_N$, we have $g=\bm{n}\cdot A \nabla u$; On $\Gamma_{C'}$, a complete Robin condition is imposed, means $-\bm{n}\cdot A \nabla u = u$; while on $\Gamma_{C^{\prime\prime}}$, it is instead with a partial Robin condition $-\bm{n}\cdot A \nabla u = 0$ if $u<0$ and $-\bm{n}\cdot A \nabla u = u$ if $u\geq 0$, or write shortly as $-\bm{n}\cdot A \nabla u = u^+$. 

One can check in this problem $j^0(\gamma_ju; \gamma_j v)=\int_{\Gamma_{C'}}uv+\int_{\Gamma_{C^{\prime\prime}}}u^+v $. The original locally Lipschitz function $j(u)=1/2 \int_{\Gamma_{C'}}u^2+1/2\int_{\Gamma_{C^{\prime\prime}}} \Brackets{u^+}^2$, and $\partial j (\gamma_ju)=u$ on $\Gamma_{C'}$ and $\partial_j(\gamma_ju)=u^+$ on $\Gamma_{C^{\prime\prime}}$.  $\partial j, j^0$ coincide respectively with classical G\^{a}teaux and Fr\`{e}chet derivative of functional $j(u)$.

\begin{figure}[htbp]
			\centering
\begin{tikzpicture}
\draw (0, 0) rectangle (4, 4);
\foreach \m in {1,...,13}
{
	\draw[color=gray] (0.3*\m-0.05, 4) -- +(135:0.3);
	\draw[color=gray,->] (0, 0.3*\m-0.05) -- +(-0.3, 0);
	\draw[color=gray,->] (4, 0.3*\m-0.05) -- +(0.3, 0);	
}
\fill[mycolor3] (-0.5, -0.5) rectangle (2, 0);
\fill[mycolor4] (2, -0.5) rectangle (4.5, 0);
\node at (0.4, 2) {$\Gamma_N$};
\node at (3.6, 2) {$\Gamma_N$};
\node at (2, 3.6) {$\Gamma_D$};
\node at (1, 0.4) {$\Gamma_{C'}$};
\node at (3, 0.4) {$\Gamma_{C^{\prime\prime}}$};
\node at (2, 2) {$\Omega$};

\begin{scope}[on background layer]
\foreach \m in {1,...,8}
\foreach \n in {1,...,8}
{
	\fill[mycolor1] (0.5*\m-0.5, 0.5*\n-0.5) rectangle (0.5*\m, 0.5*\n);
	\fill[mycolor2] (0.5*\m-0.25, 0.5*\n-0.25) circle (0.15);
}

\end{scope}
\end{tikzpicture}
	\caption{A domain $\Omega$, its boundary is split into four parts $\Gamma_D, \Gamma_N, \Gamma_{C'}, \Gamma_{C^{\prime\prime}}$}\label{fig:Rubin}
\end{figure}
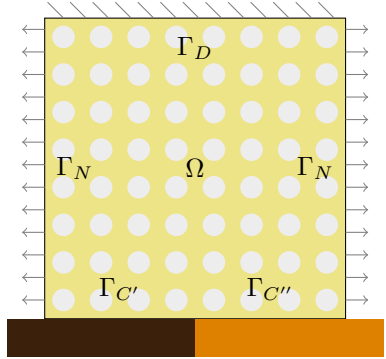

\section{Homogenization}
\label{sec:homogenization}
In this section, we will give the definition of correctors and present that the problem \cref{eq:contact problem hemiform} has a homogenized version, then we use div-curl lemma to prove $u_\epsilon$ can weak converge to the homogenized solution.

We denote $N_l(\bm{y})$ as correctors for $A^\epsilon$, which satisfy a group of PDEs with periodic boundary condition:
\begin{equation}\label{eq:correctors}
\left\{
\begin{aligned}
& -\Div\Brackets{A(\bm{y})\nabla N_l}=\Div (Ae_l)=\partial_i A_{il}(\bm{y}) ~~~~ \text{ in }Q\\
& N_l(\bm{y}) \in H^1_\sharp(Q) \text{ and } \int_Q N_l = 0
\end{aligned}
\right. .
\end{equation}
For correctors, we have $\Norm{N_l}_{1,Q} \leq C(\kappa_1, \kappa_2)$. The homogenized coefficients are defined by $\hat{A}_{il}=\fint_Q A_{il}+A_{ij}\partial_j N_l \dx \bm{y}$. The next lemma characterizes the homogenized coefficients $\hat{A}$, the proof can be found in \cite{Jikov1994} sect. 1.6.
\begin{lemma}
	Let $\kappa_1, \kappa_2$ define as previous, then $\hat{A}$ is symmetric and uniformly elliptic, means the following relation
	\[
	\begin{aligned}
	& \hat{A}_{ij} = \hat{A}_{ji} \\
	& \tilde{\kappa}_1 \Seminorm{\xi}^2 \leq \hat{A}_{ij}(\bm{y})\xi_i \xi_j \leq \tilde{\kappa}_2 \Seminorm{\xi}^2 ~~~~ \forall \xi \in \R^d
	\end{aligned}
	\]
	holds, where $\tilde{\kappa}_1, \tilde{\kappa}_2$ depends on $\kappa_1, \kappa_2$.
\end{lemma}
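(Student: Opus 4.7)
The plan is to reduce everything to the single identity
\[
\hat A_{kl} \;=\; \fint_Q (\nabla\chi_k)^\top A(\bm y)\,\nabla\chi_l \,\dx\bm y,
\]
where $\chi_l(\bm y)\coloneqq y_l+N_l(\bm y)$, so that $\nabla\chi_l=e_l+\nabla N_l$. Both symmetry and ellipticity then drop out almost mechanically from the symmetry and pointwise bounds on $A$ assumed in \textbf{A}.

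First I would rewrite the corrector equation \cref{eq:correctors} as $-\partial_i\bigl(A_{ij}\partial_j\chi_l\bigr)=0$ in $Q$, with the periodic weak formulation
\[
\int_Q A_{ij}(\bm y)\,\partial_j\chi_l\,\partial_i\phi\,\dx\bm y = 0 \qquad \forall\,\phi\in H^1_\sharp(Q).
\]
Testing with $\phi=N_k$ gives $\fint_Q A_{ij}\partial_j\chi_l\,\partial_i N_k=0$, hence
\[
\fint_Q A_{kj}\partial_j\chi_l\,\dx\bm y
= \fint_Q(\delta_{ik}+\partial_iN_k)A_{ij}\partial_j\chi_l\,\dx\bm y
= \fint_Q\partial_i\chi_k\,A_{ij}\partial_j\chi_l\,\dx\bm y.
\]
The left-hand side is exactly $\fint_Q(A_{kl}+A_{kj}\partial_jN_l)=\hat A_{kl}$ by definition, which establishes the displayed identity. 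Symmetry $\hat A_{kl}=\hat A_{lk}$ is then immediate from $A_{ij}=A_{ji}$.

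For ellipticity I would use the min-principle interpretation of the identity. Given $\xi\in\R^d$ and writing $\phi_\xi\coloneqq\xi_kN_k\in H^1_\sharp(Q)$, the identity yields
\[
\hat A_{kl}\xi_k\xi_l \;=\; \fint_Q A(\bm y)(\xi+\nabla\phi_\xi)\cdot(\xi+\nabla\phi_\xi)\,\dx\bm y.
\]
For the lower bound, apply pointwise ellipticity $A\eta\cdot\eta\ge\kappa_1|\eta|^2$ and then expand; since $\phi_\xi$ is $Q$-periodic, $\fint_Q\nabla\phi_\xi=0$, so the cross term vanishes and $\fint_Q|\xi+\nabla\phi_\xi|^2 = |\xi|^2+\fint_Q|\nabla\phi_\xi|^2\ge|\xi|^2$, giving $\hat A\xi\cdot\xi\ge\kappa_1|\xi|^2$. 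For the upper bound, observe that among all $\phi\in H^1_\sharp(Q)$ the functional $\phi\mapsto\fint_Q A(\xi+\nabla\phi)\cdot(\xi+\nabla\phi)$ is minimized at $\phi=\phi_\xi$ (its Euler–Lagrange equation is precisely the corrector equation contracted with $\xi$); taking the admissible competitor $\phi\equiv0$ gives $\hat A\xi\cdot\xi\le\fint_Q A\xi\cdot\xi\le\kappa_2|\xi|^2$. Thus one may take $\tilde\kappa_1=\kappa_1$ and $\tilde\kappa_2=\kappa_2$.

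There is no serious obstacle; the only subtle point is the upper bound, which is what makes the min-principle interpretation worth spelling out (a naive expansion of $|\xi+\nabla\phi_\xi|^2$ produces a term $\fint_Q|\nabla\phi_\xi|^2$ of the wrong sign for an upper estimate). Once the quadratic-form identity is in place, everything else is a one-line invocation of \textbf{A} and the periodicity of $N_k$.
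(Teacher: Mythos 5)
Your proof is correct, and it is essentially the standard argument (and the one in Jikov--Kozlov--Oleinik, which is what the paper cites for this lemma rather than giving its own proof): rewrite $\hat A_{kl}=\fint_Q \nabla\chi_k\cdot A\,\nabla\chi_l$ via the corrector weak form, read off symmetry, get the lower bound by expanding and killing the cross term through periodicity, and get the upper bound from the Dirichlet-principle characterization of $\hat A\xi\cdot\xi$ with competitor $\phi\equiv 0$. The one small thing worth making explicit is that the critical point $\phi_\xi$ of the functional $\phi\mapsto\fint_Q A(\xi+\nabla\phi)\cdot(\xi+\nabla\phi)$ is a global minimizer because $A\ge\kappa_1 I>0$ makes the functional convex on $H^1_\sharp(Q)$.
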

Then we have the homogenized hemivariational form:
\begin{equation}\label{eq:contact problem homohemiform}
\left\{
\begin{aligned}
&\text{Find } u_0 \in V, \text{ s.t. } \forall v \in V \\
& \int_\Omega \hat{A} \nabla u_0 \cdot \nabla v + j^0(\gamma_j u_0; \gamma_j v) \geq \FuncAction{\tilde{f}}{v}
\end{aligned}
\right. .
\end{equation}

Again, \cite{Han2017} tells us $u_0$ exists and is unique. To deduce $u_\epsilon \rightharpoonup u_0$, we first claim that $\{ u_\epsilon\}$ is uniformly bounded, then a subsequence of $\{ u_\epsilon\}$ will weakly converge in $V$. Finally, we prove the convergence point can only be $u_0$.
\begin{lemma}
	There is a constant $C$ independent with $\epsilon$, such that $\Norm{u_\epsilon}_V \leq C$, and 
	\[
	C = 1/\Delta\Brackets{c_0c_j+\Norm{\tilde{f}}_{V^*}} \leq 1/\Delta \Brackets{c_0c_j+\Norm{f}_{V^*}+c_k\Norm{g}_{0,\Gamma_N}}.
	\]
\end{lemma}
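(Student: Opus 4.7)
The plan is to test the hemivariational inequality \cref{eq:contact problem hemiform} against $v=-u_\epsilon$ (note that $-u_\epsilon \in V$), and then extract coercivity from assumption \textbf{A} on the left, while controlling the resulting $j^0$ term by combining \textbf{B1} and \textbf{B2}. Substituting $v=-u_\epsilon$ gives
\[
\int_\Omega A^\epsilon \nabla u_\epsilon \cdot \nabla u_\epsilon \;\leq\; j^0(\gamma_j u_\epsilon; -\gamma_j u_\epsilon) + \FuncAction{\tilde f}{u_\epsilon},
\]
and by uniform ellipticity the left-hand side is bounded below by $\kappa_1 \Norm{u_\epsilon}_V^2$ (using $\Norm{\cdot}_V = \Seminorm{\cdot}_{1,\Omega}$).

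The key step is controlling $j^0(\gamma_j u_\epsilon;-\gamma_j u_\epsilon)$. I would apply \cref{ass:B2} with $x_1 = \gamma_j u_\epsilon$ and $x_2 = 0$ to get
\[
j^0(\gamma_j u_\epsilon; -\gamma_j u_\epsilon) \;\leq\; \alpha_j \Norm{\gamma_j u_\epsilon}_{V_j}^2 - j^0(0; \gamma_j u_\epsilon).
\]
Since $j^0(0;h) = \sup_{x^*\in\partial j(0)} \FuncAction{x^*}{h}$ and any $x^*\in\partial j(0)$ satisfies $\Norm{x^*}_{V_j^*}\leq c_0$ by \cref{ass:B1}, picking any such $x^*$ (it exists by local Lipschitz continuity of $j$) yields $-j^0(0;\gamma_j u_\epsilon)\leq c_0\Norm{\gamma_j u_\epsilon}_{V_j}$. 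Combining with the trace bound $\Norm{\gamma_j u_\epsilon}_{V_j}\leq c_j \Norm{u_\epsilon}_V$ and $\FuncAction{\tilde f}{u_\epsilon}\leq \Norm{\tilde f}_{V^*}\Norm{u_\epsilon}_V$, all pieces become either $\Norm{u_\epsilon}_V^2$ or $\Norm{u_\epsilon}_V$, and collecting yields
\[
(\kappa_1 - \alpha_j c_j^2)\Norm{u_\epsilon}_V^2 \;\leq\; \bigl(c_0 c_j + \Norm{\tilde f}_{V^*}\bigr)\Norm{u_\epsilon}_V.
\]
Dividing by $\Norm{u_\epsilon}_V$ (if it is zero, the bound is trivial) and invoking $\Delta = \kappa_1-\alpha_j c_j^2 > 0$ from \cref{ass:C} produces the stated estimate; the final inequality $\Norm{\tilde f}_{V^*}\leq \Norm{f}_{V^*}+c_k\Norm{g}_{0,\Gamma_N}$ was already observed right after \cref{eq:contact problem hemiform}.

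Crucially, the bound is uniform in $\epsilon$ because all constants $\kappa_1,\alpha_j,c_j,c_0,c_k$ depend only on the structure of $A(\bm{y})$, $j$, and the trace operators, not on the scaling $\bm{x}\mapsto \bm{x}/\epsilon$. The main subtlety I expect is the handling of $j^0(\gamma_j u_\epsilon;-\gamma_j u_\epsilon)$: a naive estimate through \cref{ass:B1} alone would produce an $\alpha$-free term $c_1 \Norm{\gamma_j u_\epsilon}_{V_j}^2$ which is not compatible with the smallness condition \cref{ass:C}; it is essential to route through \cref{ass:B2} so that the quadratic prefactor becomes exactly $\alpha_j$, which is what makes $\Delta > 0$ the correct coercivity threshold.
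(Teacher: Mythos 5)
Your proof is correct and follows essentially the same route as the paper: test against $v=-u_\epsilon$, use uniform ellipticity for coercivity, apply \cref{ass:B2} with $x_1=\gamma_j u_\epsilon$, $x_2=0$, and bound $-j^0(0;\gamma_j u_\epsilon)$ by picking an element of $\partial j(0)$ and invoking \cref{ass:B1}. The only cosmetic difference is that you cite the support-function representation $j^0(0;h)=\max_{x^*\in\partial j(0)}\FuncAction{x^*}{h}$, whereas the paper uses directly the defining inequality $\FuncAction{\xi}{h}\leq j^0(0;h)$ for $\xi\in\partial j(0)$; these are the same estimate.
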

\begin{proof}
	Let $v=-u_\epsilon$ in \cref{eq:contact problem hemiform}, we have 
	\[
	\begin{aligned}
	\kappa_1 \Norm{u_\epsilon}_V^2 &\leq \int_{\Omega} A^\epsilon \nabla u_\epsilon\cdot \nabla u_\epsilon \leq \FuncAction{\tilde{f}}{u_\epsilon}+j^0(\gamma_ju_\epsilon; -\gamma_j u_\epsilon) \\
	&\leq \Norm{\tilde{f}}_{V^*} \Norm{u_\epsilon}_V+j^0(\gamma_ju_\epsilon; -\gamma_j u_\epsilon)
	\end{aligned}.	
	\]
	Recall \cref{ass:B2}, then
	\[
	\begin{aligned}
	j^0(\gamma_ju_\epsilon; -\gamma_j u_\epsilon) &\leq j^0(\gamma_ju_\epsilon; -\gamma_ju_\epsilon)+j^0(0;\gamma_ju_\epsilon)-j^0(0;\gamma_ju_\epsilon) \\ &\leq \alpha_j \Norm{\gamma_ju_\epsilon}_{V_j}^2-j^0(0;\gamma_ju_\epsilon) \\
	& \leq \alpha_j c_j^2 \Norm{u_\epsilon}_V^2-j^0(0;\gamma_ju_\epsilon)
	\end{aligned}.
	\]
	By the definition of $j^0(x;h)$, we choose arbitrarily a $\xi \in \partial j(0)$
	\[
	\begin{aligned}
	-j^0(0;\gamma_ju_\epsilon) &\leq -\FuncAction{\xi}{\gamma_ju_\epsilon} \leq \Norm{\xi}_{X_j^*} \Norm{\gamma_ju_\epsilon}_{X_j}\\
	& \leq c_0 c_j \Norm{u_\epsilon}_V
	\end{aligned}.
	\]
	Then we establish the desired inequality.
\end{proof}
div-curl lemma states as follows:
\begin{lemma}[see \cite{Jikov1994} Lem. 1.1]
	Let $\bm{p}_\epsilon, \bm{q}_\epsilon \in L^2(\Omega)^d$, such that:
	\[
	\bm{p}_\epsilon \rightharpoonup \bm{p}_0 ~~~~ \bm{q}_\epsilon \rightharpoonup \bm{q}_0 ~~~~ \text{in } L^2(\Omega)^d .
	\]
	If, in addition $\bm{q}_\epsilon = \nabla v_\epsilon$ and $\Div \bm{p}_\epsilon \rightarrow f$ in $H^{-1}(\Omega)$, then $\bm{p}_\epsilon \cdot \bm{q}_\epsilon \overset{*}{\rightharpoonup} \bm{p}_0 \cdot \bm{q}_0$. Here "$\overset{*}{\rightharpoonup}$" stands for \emph{*-weak} convergence, it means that $\forall \phi \in C^\infty_0(\Omega), \int_\Omega \bm{p}_\epsilon \cdot \bm{q}_\epsilon \phi \rightarrow \int_{\Omega} \bm{p}_0 \cdot \bm{q}_0 \phi$. 
\end{lemma}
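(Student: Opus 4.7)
The plan is to localize the statement, introduce a potential for $\bm{q}_\epsilon$, and apply an integration by parts that turns the noncompactness of the product $\bm{p}_\epsilon \cdot \bm{q}_\epsilon$ into a pairing in which two separate compactness gains become available.

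First I would fix $\phi \in C_0^\infty(\Omega)$ and choose a smooth open set $\omega$ with $\Supp \phi \subset \omega \subset\subset \Omega$. Since $\nabla v_\epsilon = \bm{q}_\epsilon$ is bounded in $L^2(\omega)^d$, normalizing $v_\epsilon$ to have zero mean on $\omega$ makes $\{v_\epsilon\}$ uniformly bounded in $H^1(\omega)$ by Poincar\'e's inequality. Rellich--Kondrachov then extracts a subsequence with $v_\epsilon \to v_0$ strongly in $L^2(\omega)$, and passing to the limit in $\int_\omega v_\epsilon \partial_i\psi \,\dx\bm{x} = -\int_\omega q_\epsilon^{(i)}\psi \,\dx\bm{x}$ against $\psi\in C_0^\infty(\omega)$ identifies $\nabla v_0 = \bm{q}_0$ on $\omega$. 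Separately, $\bm{p}_\epsilon \rightharpoonup \bm{p}_0$ implies $\Div \bm{p}_\epsilon \rightharpoonup \Div \bm{p}_0$ in $H^{-1}$, so the hypothesis $\Div \bm{p}_\epsilon \to f$ forces $f = \Div\bm{p}_0$ in $\Omega$.

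The core step is the product identity
\[
\int_\Omega \phi\,\bm{p}_\epsilon \cdot \bm{q}_\epsilon \,\dx\bm{x} = -\FuncAction{\Div \bm{p}_\epsilon}{\phi v_\epsilon} - \int_\Omega v_\epsilon\,\bm{p}_\epsilon \cdot \nabla\phi \,\dx\bm{x},
\]
valid because $\phi v_\epsilon \in H^1_0(\Omega)$ (the support of $\phi$ is compact). Both terms pass to the limit: the duality pairing converges to $-\FuncAction{f}{\phi v_0}$ since strong $H^{-1}$ convergence of $\Div\bm{p}_\epsilon$ matches weak $H^1_0$ convergence of $\phi v_\epsilon$, and the volume integral converges to $\int_\Omega v_0\,\bm{p}_0\cdot\nabla\phi \,\dx\bm{x}$ because Rellich supplies strong $L^2(\omega)$ convergence of $v_\epsilon\nabla\phi$ while $\bm{p}_\epsilon\rightharpoonup\bm{p}_0$ weakly in $L^2$.

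Reversing the same integration by parts with $(\bm{p}_0,v_0)$ in place of $(\bm{p}_\epsilon,v_\epsilon)$, and using $f=\Div\bm{p}_0$, packages the two limits into $\int_\Omega \phi\,\bm{p}_0\cdot\bm{q}_0\,\dx\bm{x}$, which is the desired $*$-weak limit. Since this limit is independent of the extracted subsequence, the whole sequence converges. The step I expect to be most delicate is the bookkeeping of compactness gains: on the divergence side one must exchange weak and strong convergences through the $H^{-1}$--$H^1_0$ duality, while on the gradient side one exploits Rellich compactness for the potential $v_\epsilon$; each gain comes from a different hypothesis, and losing either one breaks the argument.
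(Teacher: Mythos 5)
Your argument is correct. The paper does not supply a proof of this lemma—it is stated only as a citation to Jikov, Kozlov and Oleinik—but your localize-and-integrate-by-parts argument, with the potential $v_\epsilon$ normalized to mean zero so that Rellich--Kondrachov supplies strong $L^2(\omega)$ compactness and the strong-$H^{-1}$/weak-$H^1_0$ duality handles $\Div\bm{p}_\epsilon$, is the standard compensated-compactness proof of the div--curl lemma and is complete, including the observation that the identity is invariant under the additive normalization of $v_\epsilon$ and that uniqueness of the limit upgrades the subsequential convergence to convergence of the whole family.
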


Now we can prove the main homogenization result:
\begin{theorem}
Under the assumptions \cref{ass:A}-\cref{ass:C}, let $u_\epsilon$ and $u_0$ be the unique solution of \cref{eq:contact problem hemiform} and \cref{eq:contact problem homohemiform} respectively. Then $u_\epsilon \rightharpoonup u_0$ in $V$.
\end{theorem}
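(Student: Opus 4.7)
My plan is to follow the three-step strategy telegraphed by the authors: extract a weakly convergent subsequence from the uniform bound, identify the weak limit of $A^\epsilon \nabla u_\epsilon$ via the div-curl lemma with corrector-adapted test fields, and pass to the limit in the hemivariational inequality; uniqueness of the homogenized solution then upgrades subsequential convergence to convergence of the whole sequence. Concretely, starting from $\Norm{u_\epsilon}_V \leq C$ just proved, reflexivity of $V$ gives a subsequence with $u_\epsilon \rightharpoonup u^*$ in $V$ for some $u^* \in V$, and compactness of $\gamma_j$ strengthens this to $\gamma_j u_\epsilon \to \gamma_j u^*$ strongly in $V_j$. Uniform ellipticity gives $\Norm{A^\epsilon \nabla u_\epsilon}_{L^2(\Omega)^d} \leq \kappa_2 \Norm{u_\epsilon}_V$, so along a further subsequence $A^\epsilon \nabla u_\epsilon \rightharpoonup \bm{\sigma}$ weakly in $L^2(\Omega)^d$ for some $\bm{\sigma}$ to be identified.

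To identify $\bm{\sigma}$, I would introduce the corrector-modified test fields $w_l^\epsilon(\bm{x}) := x_l + \epsilon N_l(\bm{x}/\epsilon)$ for $1 \leq l \leq d$. A direct chain-rule computation together with the corrector equation \cref{eq:correctors} gives $\Div(A^\epsilon \nabla w_l^\epsilon) = 0$ in $\mathcal{D}'(\R^d)$. Applying \cref{thm:oscillating converge} to the $Q$-periodic functions $\partial_j N_l$ and $A_{il} + A_{ij}\partial_j N_l$ yields $\nabla w_l^\epsilon \rightharpoonup e_l$ and $A^\epsilon \nabla w_l^\epsilon \rightharpoonup \hat{A} e_l$ in $L^2(\Omega)^d$. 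I would then apply the div-curl lemma twice: once to $(A^\epsilon \nabla u_\epsilon, \nabla w_l^\epsilon)$, noting $\Div(A^\epsilon \nabla u_\epsilon) = -f$ is $\epsilon$-independent in $H^{-1}(\Omega)$; and once to $(A^\epsilon \nabla w_l^\epsilon, \nabla u_\epsilon)$, whose first component is divergence-free. Symmetry of $A^\epsilon$ makes the two scalar products equal pointwise, so comparing the $*$-weak limits yields $\sigma_l = (\hat{A}\nabla u^*)_l$ in $\mathcal{D}'(\Omega)$, and therefore $\bm{\sigma} = \hat{A}\nabla u^*$ almost everywhere.

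With the limit identified, I pass to the limit in \cref{eq:contact problem hemiform}. For fixed $v \in V$, weak convergence of $A^\epsilon \nabla u_\epsilon$ tested against $\nabla v \in L^2(\Omega)^d$ gives $\int_\Omega A^\epsilon \nabla u_\epsilon \cdot \nabla v \to \int_\Omega \hat{A}\nabla u^* \cdot \nabla v$. The nonsmooth term is controlled by the upper semicontinuity of $x \mapsto j^0(x; \gamma_j v)$ on $V_j$ together with the strong trace convergence, producing $\limsup_\epsilon j^0(\gamma_j u_\epsilon; \gamma_j v) \leq j^0(\gamma_j u^*; \gamma_j v)$. Taking $\limsup$ on both sides preserves the inequality and shows that $u^*$ solves \cref{eq:contact problem homohemiform}; uniqueness forces $u^* = u_0$, and the standard subsequence principle delivers $u_\epsilon \rightharpoonup u_0$ for the whole sequence.

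The main obstacle is the passage to the limit in the oscillating bilinear form $\int_\Omega A^\epsilon \nabla u_\epsilon \cdot \nabla v$, in which both $A^\epsilon$ and $\nabla u_\epsilon$ oscillate at scale $\epsilon$ and converge only weakly; the product of two merely weakly convergent sequences need not converge to the product of the limits. The div-curl lemma, supplied with the corrector-based test fields $w_l^\epsilon$ that import the microstructure into the test function itself, is precisely the compensated-compactness device that resolves this. A secondary technical point, worth flagging but not difficult, is that the $H^{-1}(\Omega)$-convergence hypothesis of the div-curl lemma in the first application rests only on the interior equation $-\Div(A^\epsilon \nabla u_\epsilon) = f$ and is insensitive to the contact-type boundary condition on $\Gamma_C$, which is what allows the same argument to absorb the nonsmooth coupling $\partial j$ into the limit.
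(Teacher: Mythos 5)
Your argument reproduces the paper's proof essentially verbatim: the corrector-modified test fields $w_l^\epsilon = x_l + \epsilon N_l(\bm{x}/\epsilon)$ are precisely the paper's $\bm{\lambda}\cdot\bm{x} + \epsilon N_l^\epsilon \lambda_l$ with $\bm{\lambda} = e_l$, the double application of div-curl with symmetry of $A^\epsilon$ and $\hat{A}$ is identical, and the passage to the limit via compactness of $\gamma_j$ and upper semicontinuity of $j^0(\cdot\,;h)$ is the same. Correct and matching the paper's approach, with the one small improvement that you state the subsequence-plus-uniqueness step explicitly, which the paper leaves implicit.
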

\begin{proof}
	First, we make some justifications. Take $v \in C^\infty_0(\Omega)$ and $-v$ into \cref{eq:contact problem homohemiform}, it is obvious that $j^0(\gamma_ju_\epsilon; \gamma_j v)=j^0(\gamma_j u_\epsilon; -\gamma_j v)=0$. We get $\int_\Omega A^\epsilon \nabla u_\epsilon \cdot \nabla v = \FuncAction{f}{v}$, which means $-\Div(A^\epsilon \nabla u_\epsilon) = f$ in the sense of weak derivative, and $f$ can naturally embed into $H^{-1}(\Omega)$. 
	
	We have already obtained that $\{ u_\epsilon\}$ is uniformly bounded, then up to a subsequence, we have $\nabla u_\epsilon \rightharpoonup \nabla u_0$, and $A^\epsilon \nabla u_\epsilon \rightharpoonup \bm{p}_0$ in $L^2(\Omega)^d$. And then we show $\bm{p}_0=\hat{A}\nabla u_0$ and $u_0$ satisfies \cref{eq:contact problem homohemiform}. Combine div-curl lemma and \cref{thm:oscillating converge} we get
	\[
	A^\epsilon_{ij}\partial_j u_\epsilon \partial_i (\epsilon N^\epsilon_l \lambda_l+\bm{\lambda}\cdot \bm{x}) \overset{*}{\rightharpoonup} \bm{p}_0 \cdot \bm{\lambda} .
	\]
	On the other side, notice $A^\epsilon$ and $\hat{A}$ are both symmetric, from the definition of correctors we have $\partial_j\Brackets{A^\epsilon_{ij} \partial_i (\epsilon N^\epsilon_l \lambda_l + \bm{\lambda}\cdot \bm{x})}=0$, and 
	\[
	A^\epsilon_{ij}\partial_j u_\epsilon \partial_i (\epsilon N^\epsilon_l \lambda_l+\bm{\lambda}\cdot \bm{x}) = A^\epsilon_{ij} \partial_j(\epsilon N^\epsilon_l \lambda_l + \bm{\lambda}\cdot \bm{x})\partial_i u_\epsilon \overset{*}{\rightharpoonup} \hat{A}\bm{\lambda} \cdot \nabla u_0 = \hat{A} \nabla u_0 \cdot \bm{\lambda} .
	\]
	It asserts that $\bm{p}_0=\hat{A}\nabla u_0$. 
	
	Recall that $\gamma_j$ is a compact operator from $V$ to $V_j$, then up to a subsequence $\{\gamma_ju_\epsilon \}$ converges strongly in $V_j$. Because of $u_\epsilon \rightharpoonup u_0$, $\gamma_ju_\epsilon \rightarrow \gamma_ju_0$ in $L^2(\Gamma_C)$ holds. Utilize the fact that $j^0(x; h)$ is upper semicontinuous with $x$ (see \cite{Denkowski2003} Prop. 5.6.6), we arrive at $j^0(\gamma_ju_0; \gamma_j v) \geq \limsup_\epsilon j^0(\gamma_ju_\epsilon; \gamma_j v)$, take the sup limit in the left hand of \cref{eq:contact problem hemiform}, we have $\forall v \in V$:
	\[
	\int_{\Omega} \hat{A} \nabla u_0 \cdot \nabla v +j^0(\gamma_ju_0; \gamma_j v) \geq \FuncAction{\tilde{f}}{v},
	\]
	and this finishes the proof.
\end{proof}

\section{$\epsilon^{1/2}$ estimation}
\label{sec:estimation}
Following \cref{lem:3.1}, \cref{lem:3.2} and \cref{lem:3.4} are quoted from \cite{Shen2016}. In those the standard smoothing operator $\Sop_\epsilon u = \phi_\epsilon \ast u$ is defined as usual (see \cite{Grisvard2011} sect. 7.2), while we need the convolution kernel to be contained in $B_{1/2}(\bm{0}) \subset Q$, $B_r(\bm{x})$ means an open ball centers in $\bm{x}$ with radius $r$.
\begin{lemma}\label{lem:3.1}
	Let $u \in H^1(\R^d)$. Then $\partial_i \Sop_\epsilon(u)=\Sop_\epsilon(\partial_i u)$,
	\[
	\Norm{\Sop_\epsilon u}_{0, \R^d} \leq \Norm{u}_{0, \R^d} ,
	\]
	and there exists a constant $C$ only depends on $d$, such that:
	\[
	\Norm{\Sop_\epsilon u - u}_{0, \R^d} \leq \epsilon C  \Norm{\nabla u}_{0, \R^d} .
	\]
\end{lemma}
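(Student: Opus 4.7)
The plan is to treat the three claims of the lemma in the order they are listed, noting that all three are standard consequences of the properties of the convolution $\Sop_\epsilon u = \phi_\epsilon \ast u$ once one fixes the mollifier $\phi_\epsilon(\bm{x})=\epsilon^{-d}\phi(\bm{x}/\epsilon)$ to be nonnegative, smooth, supported in $B_{1/2}(\bm{0})$ and with $\int_{\R^d}\phi=1$. First I would dispose of the commutation identity $\partial_i\Sop_\epsilon u=\Sop_\epsilon(\partial_i u)$: for $u\in C^\infty_0(\R^d)$ this is immediate by differentiating under the integral sign, and for arbitrary $u\in H^1(\R^d)$ it follows by density together with the continuity of $\Sop_\epsilon$ on $L^2$, which is precisely the content of the second claim.

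The $L^2$-contractivity $\Norm{\Sop_\epsilon u}_{0,\R^d}\leq\Norm{u}_{0,\R^d}$ I would obtain by invoking Young's convolution inequality with exponents $(1,2,2)$; since $\Norm{\phi_\epsilon}_{L^1(\R^d)}=1$ by the normalization above, this gives the result with sharp constant. Equivalently one may write $\Sop_\epsilon u(\bm{x})=\int \phi_\epsilon(\bm{y})u(\bm{x}-\bm{y})\dx\bm{y}$ and apply the Cauchy–Schwarz inequality pointwise using $\phi_\epsilon=\phi_\epsilon^{1/2}\cdot\phi_\epsilon^{1/2}$ followed by Fubini, which is essentially the same argument.

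For the rate estimate I would first establish it on smooth compactly supported functions and then pass to the limit by density. For $u\in C^\infty_0(\R^d)$, use the fundamental theorem of calculus along the segment from $\bm{x}$ to $\bm{x}-\bm{y}$:
\[
\Sop_\epsilon u(\bm{x})-u(\bm{x})
=\int_{\R^d}\phi_\epsilon(\bm{y})\bigl(u(\bm{x}-\bm{y})-u(\bm{x})\bigr)\dx\bm{y}
=-\int_{\R^d}\phi_\epsilon(\bm{y})\int_0^1\nabla u(\bm{x}-t\bm{y})\cdot\bm{y}\,\dx t\,\dx\bm{y}.
\]
Since $\Supp\phi_\epsilon\subset B_{\epsilon/2}(\bm{0})$, the factor $|\bm{y}|$ is bounded by $\epsilon/2$. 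Applying Minkowski's integral inequality in $L^2(\R^d,\dx\bm{x})$ to exchange the double integral with the $L^2$-norm, and using the translation invariance of $\Norm{\nabla u}_{0,\R^d}$ together with $\int\phi_\epsilon=1$, yields
\[
\Norm{\Sop_\epsilon u-u}_{0,\R^d}\leq \tfrac{\epsilon}{2}\Norm{\nabla u}_{0,\R^d},
\]
so one may take $C=1/2$. The extension to $u\in H^1(\R^d)$ is by approximation in $H^1$-norm combined with the first two parts of the lemma, which ensure that both sides of the inequality pass to the limit continuously.

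The main subtlety, rather than a deep obstacle, is making sure the mollifier scaling convention is consistent with the containment $\Supp\phi_\epsilon\subset B_{\epsilon/2}(\bm{0})\subset\epsilon Q$ that the paper uses (so that later applications, where one cuts off near the boundary on a strip of width $\epsilon$, work out). Once this convention is fixed, every step above is elementary and the constant $C$ depends only on $d$ through the chosen $\phi$.
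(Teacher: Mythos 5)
Your proof is correct and uses the standard mollifier arguments (Young's inequality, the fundamental-theorem-of-calculus representation of $\Sop_\epsilon u-u$, Minkowski's integral inequality, and density of $C^\infty_0$ in $H^1$). The paper does not actually supply a proof of Lemma~\ref{lem:3.1}---it is quoted from Shen et al.\ \cite{Shen2016}---and the argument there is essentially the same as what you have written, so there is nothing to compare beyond noting that your constant $C=1/2$, which comes from $\Supp\phi_\epsilon\subset B_{\epsilon/2}(\bm{0})$, is consistent with (indeed slightly sharper than) the ``$C$ depends only on $d$'' stated in the lemma.
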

\begin{lemma}\label{lem:3.2}
	Let $f \in L^2_{\text{loc}}(\R^d)$ be a 1-periodic function. Then there exists a constant $C$ only depends on $d$, such that for any $u \in L^2(\R^d)$,
	\[
	\Norm{f^\epsilon \Sop_\epsilon u}_{0, \R^d} \leq C\Norm{f}_{0, Q}\Norm{u}_{0,\R^d} .
	\]
\end{lemma}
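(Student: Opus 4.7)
The plan is to reduce the bound to a pointwise argument for $\Sop_\epsilon u$ followed by a periodicity/localization argument for $f^\epsilon$.

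First, I would use the convolution structure of $\Sop_\epsilon$ together with Jensen's inequality (or equivalently Cauchy-Schwarz applied to the nonnegative density $\phi_\epsilon$ with $\int \phi_\epsilon = 1$) to obtain the pointwise estimate
\[
\Seminorm{\Sop_\epsilon u(\bm{x})}^2 \leq \int_{\R^d} \phi_\epsilon(\bm{x}-\bm{y}) \Seminorm{u(\bm{y})}^2 \dx \bm{y}.
\]
Multiplying by $\Seminorm{f^\epsilon(\bm{x})}^2$, integrating over $\R^d$, and applying Fubini gives
\[
\Norm{f^\epsilon \Sop_\epsilon u}_{0,\R^d}^2 \leq \int_{\R^d} \Seminorm{u(\bm{y})}^2 \Brackets{\int_{\R^d} \Seminorm{f^\epsilon(\bm{x})}^2 \phi_\epsilon(\bm{x}-\bm{y})\dx \bm{x}} \dx \bm{y}.
\]
So the task reduces to bounding the inner integral uniformly in $\bm{y}$ by $C(d)\Norm{f}_{0,Q}^2$.

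Next I would handle the inner integral by exploiting the support condition on $\phi_\epsilon$ (contained in $B_{\epsilon/2}(\bm{0})$) and the fact that $\Norm{\phi_\epsilon}_{L^\infty} \lesssim \epsilon^{-d}$. The change of variables $\bm{z}=\bm{x}/\epsilon$ turns the integral into one over $B_{1/2}(\bm{y}/\epsilon)$ and cancels the $\epsilon^{-d}$ factor against the Jacobian $\epsilon^d$, leaving
\[
\int_{\R^d} \Seminorm{f^\epsilon(\bm{x})}^2 \phi_\epsilon(\bm{x}-\bm{y})\dx \bm{x} \leq C(d) \int_{B_{1/2}(\bm{y}/\epsilon)} \Seminorm{f(\bm{z})}^2 \dx \bm{z}.
\]
Since a ball of radius $1/2$ has diameter $1$, it is contained in the union of at most $2^d$ unit cells $Q+\bm{k}$ ($\bm{k}\in\Z^d$). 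By the 1-periodicity of $f$, each such piece contributes at most $\Norm{f}_{0,Q}^2$, so the right-hand side is bounded by $C(d)\Norm{f}_{0,Q}^2$ uniformly in $\bm{y}$. Combining with the Fubini estimate above yields the claim.

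There is no real obstacle in this argument; the only subtle point is the bookkeeping in the change of variables step where the $\epsilon^{-d}$ blowup of $\Norm{\phi_\epsilon}_{L^\infty}$ must be absorbed exactly by the rescaling Jacobian so that the final constant depends only on dimension and on $\Norm{f}_{0,Q}$ (and not on $\epsilon$ or on the profile $\phi$ beyond a dimensional constant).
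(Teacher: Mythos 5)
Your proof is correct, and it is essentially the standard argument for this estimate. Note that the paper does not prove \cref{lem:3.2} itself: it is quoted from \cite{Shen2016} (Shen's periodic homogenization notes/book), and the proof there proceeds in exactly the way you describe — Jensen/Cauchy–Schwarz on the convolution using $\phi_\epsilon \ge 0$ and $\int \phi_\epsilon = 1$, Fubini, then a uniform bound on the inner integral $\int |f^\epsilon(\bm{x})|^2 \phi_\epsilon(\bm{x}-\bm{y})\,\dx\bm{x}$ obtained by rescaling and covering the support (a set of diameter $\le 1$ after rescaling) by at most $2^d$ periodicity cells. One small remark on your bookkeeping: the constant you obtain depends on $d$ through $\|\phi\|_{L^\infty}$ and the covering number $2^d$, which is consistent with the paper's claim since $\phi$ is a fixed mollifier profile; it is worth stating explicitly that the only facts about $\phi_\epsilon$ you use are $\phi_\epsilon \ge 0$, $\int\phi_\epsilon = 1$, $\Supp\phi_\epsilon \subset B_{\epsilon/2}(\bm{0})$, and $\|\phi_\epsilon\|_{L^\infty} \le C(d)\epsilon^{-d}$, so that the $\epsilon^{-d}$ cancels exactly against the Jacobian $\epsilon^d$ as you say.
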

Denote $\tilde{\Omega}_{\epsilon}=\{\bm{x} \in \R^d: \text{dist}(\bm{x}, \partial \Omega) < \epsilon \}$ as a boundary strip to $\Omega$ with width $2\epsilon$, and $\Omega_{\epsilon}=\{\bm{x} \in \Omega: \text{dist}(\bm{x}, \partial \Omega) < \epsilon \}$. We have an estimation:
\begin{lemma} \label{lem:3.4}
	Let $\Omega$ be a bounded Lipschitz domain in $\R^d$ and $f \in L^2_{\text{loc}}(\R^d)$ a 1-periodic function. Then there exists a constant $C$ only depends on $\Omega$, such that for any $u \in H^1(\R^d)$
	\[
	\int_{\tilde{\Omega}_{\epsilon}} \Seminorm{f^\epsilon}^2 \Seminorm{\Sop_\epsilon(u)}^2 \leq C \epsilon \Norm{f}_{0,Q}^2\Norm{u}^2_{1,\R^d} .
	\]
\end{lemma}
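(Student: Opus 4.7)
The plan is to reduce the claim to a standard boundary-strip estimate for $H^1$ functions, with the periodic factor $f^\epsilon$ handled by Fubini and the compact support of the mollifier.

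First I would apply Jensen's inequality to $|\Sop_\epsilon u|^2$. Since $\phi_\epsilon$ is a probability density supported in $B_{\epsilon/2}(\bm{0})$, we have $|\Sop_\epsilon u(\bm{x})|^2 \leq (\phi_\epsilon \ast |u|^2)(\bm{x})$. Inserting this into the integral and invoking Fubini yields
\[
\int_{\tilde{\Omega}_{\epsilon}} |f^\epsilon(\bm{x})|^2 |\Sop_\epsilon u(\bm{x})|^2 \dx \bm{x} \leq \int_{\R^d} |u(\bm{y})|^2 \left( \int_{\tilde{\Omega}_{\epsilon}} |f^\epsilon(\bm{x})|^2 \phi_\epsilon(\bm{x}-\bm{y}) \dx \bm{x}\right) \dx \bm{y}.
\]
For the inner integral I would use $\|\phi_\epsilon\|_\infty \leq C\epsilon^{-d}$ together with the $\epsilon$-periodicity of $f^\epsilon$: a ball of radius $\epsilon/2$ is covered by $O(1)$ translates of an $\epsilon Q$-cell, so after the change of variable $\bm{z} = \bm{x}/\epsilon$ one obtains $\int_{B_{\epsilon/2}(\bm{y})} |f^\epsilon|^2 \leq C \epsilon^d \|f\|_{0,Q}^2$. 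Because $\phi_\epsilon$ has support of radius $\epsilon/2$, the inner integral vanishes unless $\bm{y}$ lies within distance at most $3\epsilon/2$ of $\partial\Omega$, so I absorb the indicator into $\chi_{\tilde{\Omega}_{2\epsilon}}$ (the exact dilation factor is harmless). This gives the estimate
\[
\int_{\tilde{\Omega}_{\epsilon}} |f^\epsilon|^2 |\Sop_\epsilon u|^2 \dx\bm{x} \leq C\|f\|_{0,Q}^2 \int_{\tilde{\Omega}_{2\epsilon}} |u(\bm{y})|^2 \dx\bm{y}.
\]

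It remains to prove the boundary-strip inequality $\int_{\tilde{\Omega}_{2\epsilon}} |u|^2 \leq C(\Omega)\,\epsilon \,\|u\|_{1,\R^d}^2$. This is standard for Lipschitz domains: using a finite atlas that flattens $\partial\Omega$ locally, one parametrises the strip by arclength in a normal-type direction, writes $u(\bm{x}) = u(\bm{x}_0) + \int_0^t \partial_\nu u \dx s$ along lines transverse to $\partial\Omega$, and applies Cauchy--Schwarz followed by the trace theorem to pick up the factor $\epsilon$ against $\|u\|_{0,\partial\Omega}^2 + \|\nabla u\|_{0,\R^d}^2 \leq C\|u\|_{1,\R^d}^2$. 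Combining this with the previous display finishes the proof.

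The only step requiring care is the strip estimate, which is where the Lipschitz regularity of $\partial\Omega$ enters and hence where the constant $C=C(\Omega)$ arises; the periodic part of the argument is purely algebraic once Jensen and the compact support of $\phi_\epsilon$ are exploited.
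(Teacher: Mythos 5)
Your proof is correct and follows essentially the same two-step structure as the argument in the cited source \cite{Shen2016}; the present paper only quotes the lemma and does not reprove it. The one small difference is how you obtain the intermediate bound
\[
\int_{\tilde{\Omega}_{\epsilon}} \Seminorm{f^\epsilon}^2 \Seminorm{\Sop_\epsilon u}^2 \leq C\Norm{f}_{0,Q}^2\int_{\tilde{\Omega}_{2\epsilon}}\Seminorm{u}^2 .
\]
You derive it from scratch via Jensen's inequality (valid since the mollifier is a nonnegative kernel of integral one), Fubini, and the support radius $\epsilon/2$ of $\phi_\epsilon$, whereas the shortest route --- and the reason \cref{lem:3.2} is stated first --- is to pick a cutoff $\chi$ with $\chi\equiv 1$ on $\tilde{\Omega}_{3\epsilon/2}$ and $\Supp\chi\subset\tilde{\Omega}_{2\epsilon}$, note that $\Sop_\epsilon u=\Sop_\epsilon(u\chi)$ on $\tilde{\Omega}_\epsilon$ because the mollifier averages over balls of radius $\epsilon/2$, and then apply \cref{lem:3.2} to $u\chi$. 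Both deliver the same display, and both then appeal to the standard Lipschitz boundary-strip estimate $\int_{\tilde{\Omega}_{2\epsilon}}\Seminorm{u}^2 \leq C(\Omega)\,\epsilon\,\Norm{u}_{1,\R^d}^2$, which you sketch correctly via local flattening, a line integral in the transverse direction, Cauchy--Schwarz, and the trace theorem. Your version is slightly more self-contained; the cutoff version is a bit quicker given that \cref{lem:3.2} is already available. Either way the constant depends only on $\Omega$, as claimed.
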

\begin{remark}
	We can obtain a similar result comparing to \cref{lem:3.2} by assuming $\Norm{f}_{L^\infty(\Omega)} < \infty$, but in here the periodic property plays key role and leads to relax on regularity assumption for $f$, which provides us more generality in the estimation.
\end{remark}

Recall the domain we consider has Lipschitz boundary, then the extension operator $\mathbf{E}: H^1(\Omega)\mapsto H^1(\R^d)$ exists and is bounded. Let $w_\epsilon = u_\epsilon - u_0-\epsilon N^\epsilon_l \Sop_\epsilon(\partial_l \overbar{u}_0)$, $\overbar{u}_0 = \mathbf{E}u_0 \in H^1(\R^d)$ is the extension of $u_0$, our goal in this section is to prove following key lemma:
\begin{lemma}\label{lem:key}
	Let \cref{ass:A} be satisfied, and $u_\epsilon, u_0$ the solution of \cref{eq:contact problem hemiform} \cref{eq:contact problem homohemiform} respectively, and suppose that $u_0 \in H^2(\Omega)$, then $\forall v \in V$:
	\[
	\int_{\Omega} A^\epsilon \nabla w_\epsilon \cdot \nabla v \leq j^0(\gamma_ju_0; \gamma_j v)+j^0(\gamma_j u_\epsilon; -\gamma_j v) +  C \Norm{u_0}_{2,\Omega}
	\Brackets{\epsilon^{1/2}\Norm{\nabla v}_{0,\Omega_{2\epsilon}}+\epsilon\Norm{\nabla v}_{0,\Omega}},
	\]
	here constant $C$ dependents on $\Omega, \kappa_1, \kappa_2$ and $w_\epsilon = u_\epsilon - u_0-\epsilon N^\epsilon_l \Sop_\epsilon(\partial_l \overbar{u}_0)$.
\end{lemma}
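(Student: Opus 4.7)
The plan is to convert $\int_\Omega A^\epsilon\nabla w_\epsilon\cdot\nabla v$ into the two Clarke-derivative terms on the right-hand side of the claim plus three explicit remainders, each of which can be estimated using \cref{lem:3.1,lem:3.2,lem:3.4}. First I would test \cref{eq:contact problem hemiform} with $-v$ and \cref{eq:contact problem homohemiform} with $v$; subtracting eliminates the duality pairing with $\tilde f$ and gives
\[
\int_\Omega A^\epsilon\nabla u_\epsilon\cdot\nabla v\leq \int_\Omega\hat A\nabla u_0\cdot\nabla v+j^0(\gamma_ju_0;\gamma_jv)+j^0(\gamma_ju_\epsilon;-\gamma_jv).
\]

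Next, writing $g^\epsilon_l:=\Sop_\epsilon(\partial_l\overbar u_0)$ and introducing the periodic tensor $b_{il}(\bm{y}):=\hat A_{il}-A_{il}(\bm{y})-A_{ij}(\bm{y})\partial_jN_l(\bm{y})$, a short index computation based on the identity $A^\epsilon_{il}+A^\epsilon_{ij}\partial_jN^\epsilon_l=\hat A_{il}-b^\epsilon_{il}$ rewrites the inequality above as
\[
\int_\Omega A^\epsilon\nabla w_\epsilon\cdot\nabla v\leq j^0(\gamma_ju_0;\gamma_jv)+j^0(\gamma_ju_\epsilon;-\gamma_jv)+J_1+J_2+J_3,
\]
where
\[
J_1=\int_\Omega(\hat A_{ij}-A^\epsilon_{ij})(\partial_ju_0-g^\epsilon_j)\partial_iv,\qquad J_2=\int_\Omega b^\epsilon_{il}g^\epsilon_l\partial_iv,
\]
\[
J_3=-\epsilon\int_\Omega A^\epsilon_{ij}N^\epsilon_l\Sop_\epsilon(\partial_j\partial_l\overbar u_0)\partial_iv.
\]
The uniform bound on $A^\epsilon,\hat A$ together with \cref{lem:3.1} applied to $\partial_j\overbar u_0$ gives $\Seminorm{J_1}\leq C\epsilon\Norm{u_0}_{2,\Omega}\Norm{\nabla v}_{0,\Omega}$, and \cref{lem:3.2} applied with $f=N_l$ (bounded in $L^2(Q)$ by $C(\kappa_1,\kappa_2)$) gives $\Seminorm{J_3}\leq C\epsilon\Norm{u_0}_{2,\Omega}\Norm{\nabla v}_{0,\Omega}$.

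The main obstacle is $J_2$. The corrector equation \cref{eq:correctors} and the definition of $\hat A$ force $b_{il}$ to be periodic, mean-zero, and divergence-free in $i$, so the classical flux-corrector construction (solve $-\Delta\phi_{il}=b_{il}$ on $Q$ periodically with zero mean and set $\psi_{kil}:=\partial_i\phi_{kl}-\partial_k\phi_{il}$) produces $\psi_{kil}\in H^1_\sharp(Q)$ antisymmetric in $(k,i)$ with $\partial_k\psi_{kil}=b_{il}$ and $\Norm{\psi_{kil}}_{0,Q}\leq C(\kappa_1,\kappa_2)$; consequently $b^\epsilon_{il}=\epsilon\,\partial_k\psi^\epsilon_{kil}$. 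I would then introduce a cutoff $\theta_\epsilon\in C^\infty_c(\Omega)$ with $\theta_\epsilon\equiv 1$ on $\Omega\setminus\Omega_{2\epsilon}$ and $\Seminorm{\nabla\theta_\epsilon}\leq C/\epsilon$, and decompose $J_2=\int\theta_\epsilon b^\epsilon_{il}g^\epsilon_l\partial_iv+\int(1-\theta_\epsilon)b^\epsilon_{il}g^\epsilon_l\partial_iv$. The boundary piece, supported in $\Omega_{2\epsilon}$, is handled by Cauchy--Schwarz together with \cref{lem:3.4} (with $f=b_{il}$), giving $C\epsilon^{1/2}\Norm{u_0}_{2,\Omega}\Norm{\nabla v}_{0,\Omega_{2\epsilon}}$. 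For the interior piece, substituting $b^\epsilon_{il}=\epsilon\partial_k\psi^\epsilon_{kil}$ and integrating by parts in $\partial_k$ (no boundary term thanks to $\theta_\epsilon\in C^\infty_c(\Omega)$) produces three sub-terms: the one containing $\partial_k\partial_iv$ vanishes by antisymmetry of $\psi_{kil}$ in $(k,i)$ (rigorously handled by a further integration by parts and a density argument on $v$), the one carrying $\nabla\theta_\epsilon$ is localised in $\Omega_{2\epsilon}$ and bounded again by \cref{lem:3.4} as $C\epsilon^{1/2}\Norm{u_0}_{2,\Omega}\Norm{\nabla v}_{0,\Omega_{2\epsilon}}$, and the remaining one carrying $\partial_k g^\epsilon_l=\Sop_\epsilon(\partial_k\partial_l\overbar u_0)$ is bounded via \cref{lem:3.2} by $C\epsilon\Norm{u_0}_{2,\Omega}\Norm{\nabla v}_{0,\Omega}$. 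Collecting the estimates of $J_1,J_2,J_3$ reproduces exactly the inequality in the claim.

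The principal difficulty is precisely this $J_2$ step: since $v$ lies only in $H^1$ and need not vanish on $\Gamma_N\cup\Gamma_C$, a naive global integration by parts against $\psi^\epsilon$ would leave an uncontrolled trace on $\partial\Omega$. The cutoff-and-antisymmetry maneuver is forced by this, and the boundary strip leaks into the final estimate through \cref{lem:3.4} as the $\epsilon^{1/2}\Norm{\nabla v}_{0,\Omega_{2\epsilon}}$ term on the right-hand side, the other remainders being harmless $O(\epsilon)$ contributions.
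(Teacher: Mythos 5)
Your proposal is correct and follows essentially the same route as the paper: subtract the two hemivariational inequalities to produce the two Clarke-derivative terms, isolate the oscillating remainder $\int_\Omega B^\epsilon_{il}\Sop_\epsilon(\partial_l\overbar{u}_0)\partial_iv$ with $B_{il}=\hat A_{il}-A_{il}-A_{ij}\partial_jN_l$, and handle it via the divergence-free, mean-zero structure of $B$ (the skew-symmetric flux corrector $\psi_{kil}$, called $E_{ijl}$ in the paper) together with a cutoff adapted to the boundary strip, with \cref{lem:3.4} supplying the $\epsilon^{1/2}\Norm{\nabla v}_{0,\Omega_{2\epsilon}}$ term and \cref{lem:3.1,lem:3.2} the $O(\epsilon)$ pieces. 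Your three-way split $J_1,J_2,J_3$ is a slightly cleaner bookkeeping of what the paper groups as $J_1+J_2$ (and incidentally avoids a sign slip in the paper's displayed $T_\epsilon$), but the underlying argument is identical.
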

\begin{proof}
	It is harmless to assume $v \in C^\infty (\Omega) \cap V$. Since $u_0 \in H^2(\Omega)$, we have $\Norm{\overbar{u}_0}_{2, \R^d} \leq C \Norm{u}_{2,\Omega}$. In following proof, constant $C$ only depends on $\Omega, \kappa_1, \kappa_2$. By Green formula,  we have $\forall v \in V$
	\[
	\begin{aligned}
	\int_{\Omega} A^\epsilon \nabla u_\epsilon \cdot \nabla v - \int_{\Gamma_C} \bm{n}\cdot A^\epsilon\nabla u_\epsilon v &= \FuncAction{f}{v}+\int_{\Gamma_N} g v \\
	&= \int_{\Omega} \hat{A} \nabla u_0 \cdot \nabla v - \int_{\Gamma_C}  \bm{n} \cdot \hat{A} \nabla u_0 v 
	\end{aligned}.
	\]
	The definition of $j^0$ gives:
	\[
	\begin{aligned}
	\int_{\Omega} A^\epsilon \nabla u_\epsilon \cdot \nabla v &= \int_{\Omega} \hat{A} \nabla u_0 \cdot \nabla v + \int_{\Gamma_C}  \Brackets{-\bm{n} \cdot \hat{A} \nabla u_0} v + \int_{\Gamma_C} \Brackets{-\bm{n}\cdot A^\epsilon\nabla u_\epsilon}\Brackets{-v} \\
	& \leq \int_{\Omega} \hat{A} \nabla u_0 \cdot \nabla v+j^0(\gamma_j u_0; \gamma_j v)+ j^0(\gamma_j u_\epsilon; -\gamma_j v)
	\end{aligned}.
	\]
	Take a direct calculation,
	\[
	\begin{aligned}
	&\int_\Omega A^\epsilon \nabla w_\epsilon \cdot \nabla v \\
	=& \int_{\Omega} A^\epsilon \nabla u_\epsilon \cdot \nabla v-\int_{\Omega} A^\epsilon \nabla u_0 \cdot \nabla v - \int_{\Omega}A^\epsilon_{ij}\Brackets{\partial_j N_l}^\epsilon \Sop_\epsilon(\partial_l \overbar{u}_0) \partial_i v -\int_{\Omega}\epsilon A^\epsilon_{ij}N^\epsilon_l \Sop_\epsilon(\partial^2_{lj}\overbar{u}_0)\partial_i v \\
	\leq& j^0(\gamma_j u_0; \gamma_j v)+ j^0(\gamma_j u_\epsilon; -\gamma_j v) \\
	&+\int_{\Omega} \Braces{\hat{A}_{ij} \partial_j u_0-A^\epsilon_{ij}\partial_j u_0 - A^\epsilon_{ij}\Brackets{\partial_j N_l}^\epsilon \Sop_\epsilon(\partial_l \overbar{u}_0)}\partial_i v\\
	&-\int_{\Omega}\epsilon A^\epsilon_{ij}N^\epsilon_l \Sop_\epsilon(\partial^2_{lj}\overbar{u}_0)\partial_i v
	\end{aligned}.
	\]
	The techniques for last two parts of above inequality are exactly same as (\cite{Shen2016} Lem. 3.5). However, we will still elaborate its details for the coherence and self-containing of the proof.
	\[
	\begin{aligned}
	&\int_{\Omega} \Braces{\hat{A}_{ij} \partial_j u_0-A^\epsilon_{ij}\partial_j u_0 - A^\epsilon_{ij}\Brackets{\partial_j N_l}^\epsilon \Sop_\epsilon(\partial_l \overbar{u}_0)}\partial_i v-\int_{\Omega}\epsilon A^\epsilon_{ij}N^\epsilon_l \Sop_\epsilon(\partial^2_{lj}\overbar{u}_0)\partial_i v \\
	\coloneqq & \int_\Omega B^\epsilon_{il} \Sop_\epsilon(\partial_l \overbar{u}_0) \partial_i v+\int_\Omega T_\epsilon \cdot \nabla v \\
	\coloneqq & J_1+J_2
	\end{aligned}.
	\]
	Here $B^\epsilon=\SquareBrackets{B_{il}(\bm{x}/\epsilon)}_{1\leq i,l \leq n} \coloneqq \hat{A}-A^\epsilon-\Brackets{A_{ij}\partial_j N_l}^\epsilon$, and 
	\[
	T_\epsilon \coloneqq \Brackets{\hat{A}\nabla u_0 - \hat{A}\Sop_\epsilon(\nabla \overbar{u}_0)}+\Brackets{A^\epsilon\nabla u_0-A^\epsilon \Sop_\epsilon(\nabla \overbar{u}_0)}-\epsilon A^\epsilon_{ij} N^\epsilon_l \Sop_\epsilon(\partial^2_{lj}\overbar{u}_0),
	\]
	\cref{lem:3.1} together with \cref{lem:3.2} implies $\Norm{T_\epsilon}_{0,\Omega} \leq \epsilon \Norm{u_0}_{2,\Omega}$, then:
	\[
	\Seminorm{J_2} \leq C \epsilon \Norm{u_0}_{2,\Omega} \Norm{\nabla v}_{0,\Omega} .
	\]
	Because $\partial_i B_{il}=\bm{0}$ and $\fint_Q B_{il} = \bm{0}$, we can construct $E_{ijl}(\bm{y}) \in H^1_\sharp(Q)$, such that $B_{il}=\partial_j E_{ijl}$, $E_{ijl}=-E_{jil}$, and $\Norm{E_{ijl}}_{1,Q} \leq C \Norm{B_{il}}_{0,Q} \leq C(\kappa_1, \kappa_2)$, see (\cite{Jikov1994} p. 6-7). Take $\theta_\epsilon \in C^\infty(\R^d)$, let $\theta_\epsilon(\bm{x}) \equiv 1$ when $\text{dist}(\bm{x}, \partial \Omega) \leq \epsilon$; $\theta_\epsilon(\bm{x}) \equiv 0$ when $\text{dist}(\bm{x}, \partial \Omega) \geq 2\epsilon$; $0 \leq \theta_\epsilon(\bm{x}) \leq 1$ when $\epsilon \leq \text{dist}(\bm{x}, \partial \Omega) \leq 2\epsilon$. We have $\Norm{\nabla \theta_\epsilon}_{L^\infty(\R^d)} \leq C(\Omega)/\epsilon$. Then split $J_1$ as
	\[
	J_1 = \int_\Omega (\partial_j E_{ijl})^\epsilon \theta_\epsilon \Sop_\epsilon(\partial_l \overbar{u}_0) \partial_i v +\int_\Omega (\partial_j E_{ijl})^\epsilon (1-\theta_\epsilon) \Sop_\epsilon(\partial_l \overbar{u}_0) \partial_i v  \coloneqq J_{11}+J_{12} .
	\]
	Use \cref{lem:3.4}, we have:
	\[
	\begin{aligned}
	\Seminorm{J_{11}} \leq &\Norm{\nabla v}_{0,\Omega_{2\epsilon}} \SquareBrackets{\sum_i\int_{\Omega_{2\epsilon}} \Seminorm{(\partial_j E_{ijl})^\epsilon}^2 \Seminorm{\Sop_\epsilon(\partial_l \overbar{u}_0)}^2 \theta_\epsilon^2}^{1/2} \\
	\leq & \Norm{\nabla v}_{0,\Omega_{2\epsilon}} \SquareBrackets{\sum_i\int_{\tilde{\Omega}_{2\epsilon}} \Seminorm{(\partial_j E_{ijl})^\epsilon}^2 \Seminorm{\Sop_\epsilon(\partial_l \overbar{u}_0)}^2}^{1/2} \\
	\leq  & C \epsilon^{1/2} \Norm{u_0}_{2,\Omega}\Norm{\nabla v}_{0,\Omega_{2\epsilon}}
	\end{aligned}.
	\]
	Due to $1-\theta_\epsilon \equiv 0$ on $\partial \Omega$, $(1-\theta_\epsilon) \Sop_\epsilon(\partial_l \overbar{u}_0) \partial_i v \in C^\infty_0(\Omega)$, and $(\partial_j E_{ijl})^\epsilon=\epsilon \partial_j E_{ijl}^\epsilon$. According to integration by parts:
	\[
	\begin{aligned}
	J_{12}=&\int_{\Omega} \epsilon \partial_j E_{ijl}^\epsilon (1-\theta_\epsilon) \Sop_\epsilon(\partial_l \overbar{u}_0) \partial_i v 
	= -\int_{\Omega} \epsilon E_{ijl}^\epsilon \partial_j\SquareBrackets{(1-\theta_\epsilon)\Sop_\epsilon(\partial_l \overbar{u}_0)\partial_i v} \\
	= & \int_{\Omega} E_{ijl}^\epsilon \epsilon \partial_j \theta_\epsilon \Sop_\epsilon(\partial_l \overbar{u}_0) \partial_i v \\
	& - \int_{\Omega} \epsilon E_{ijl}^\epsilon (1-\theta_\epsilon) \Sop_\epsilon(\partial^2_{jl}\overbar{u}_0) \partial_i v \\
	& - \int_{\Omega} \epsilon E_{ijl}^\epsilon (1-\theta_\epsilon) \Sop_\epsilon(\partial_l \overbar{u}_0) \partial^2_{ij} v	
	\end{aligned},
	\]
	$E_{ijl}=-E_{jil}$ implies $\int_{\Omega} \epsilon E_{ijl}^\epsilon (1-\theta_\epsilon) \Sop_\epsilon(\partial_l \overbar{u}_0) \partial^2_{ij} v =0$, and
	\[
	\begin{aligned}
	\int_{\Omega} \Seminorm{E_{ijl}^\epsilon \epsilon \partial_j \theta_\epsilon \Sop_\epsilon(\partial_l \overbar{u}_0) \partial_i v} \leq& C \Norm{\nabla v}_{0,\Omega_{2\epsilon}} \SquareBrackets{\sum_{i,j}\int_{\tilde{\Omega}_{2\epsilon}} \Seminorm{E^\epsilon_{ijl}}^2 \Seminorm{\Sop_\epsilon(\partial_l \overbar{u}_0)}^2}^{1/2} \\\leq& C \epsilon^{1/2} \Norm{\nabla v}_{0,\Omega_{2\epsilon}} \Norm{u_0}_{2,\Omega}
	\end{aligned},
	\]
	\[
	\begin{aligned}
	\int_{\Omega} \Seminorm{\epsilon E_{ijl}^\epsilon (1-\theta_\epsilon)\Sop_\epsilon(\partial_{jl}\overbar{u}_0)\partial_i v} \leq& \epsilon \Norm{\nabla v}_{0,\Omega} \SquareBrackets{\sum_i \int_{\Omega}  \Seminorm{E_{ijl}^\epsilon}^2 \Seminorm{\Sop_\epsilon(\partial_{jl}\overbar{u}_0)}^2}^{1/2} \\ \leq& C \epsilon \Norm{\nabla v}_{0,\Omega} \Norm{u_0}_{2,\Omega}
	\end{aligned}.	
	\]
	Finally, we derive $\Seminorm{J_1}+\Seminorm{J_2}\leq C \Norm{u_0}_{2,\Omega}
	\Brackets{\epsilon^{1/2}\Norm{\nabla v}_{0,\Omega_{2\epsilon}}+\epsilon\Norm{\nabla v}_{0,\Omega}}$. 
\end{proof}
A corollary of this lemma is following theorem:
\begin{theorem}
	Let the assumptions \cref{ass:A}-\cref{ass:C} be satisfied, and $u_0 \in H^2(\Omega)$. Then
	\[
	\Norm{\nabla w_\epsilon}_{0,\Omega} \leq C\epsilon^{1/2}\Norm{u_0}_{2,\Omega},
	\]
	here constant $C$ depends on $\Omega, \kappa_1, \kappa_2, \Delta$.
\end{theorem}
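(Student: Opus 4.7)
The plan is to substitute $v = w_\epsilon$ into \cref{lem:key} and turn the resulting inequality into a coercive estimate on $\Norm{\nabla w_\epsilon}_{0,\Omega}$. By the uniform ellipticity in \cref{ass:A} the left-hand side dominates $\kappa_1 \Norm{\nabla w_\epsilon}^2_{0,\Omega}$, while for $\epsilon \le 1$ the boundary-strip and bulk residuals on the right collapse into $C\epsilon^{1/2}\Norm{u_0}_{2,\Omega}\Norm{\nabla w_\epsilon}_{0,\Omega}$. It then remains to dominate the hemivariational contribution
\[
j^0(\gamma_j u_0;\gamma_j w_\epsilon)+j^0(\gamma_j u_\epsilon;-\gamma_j w_\epsilon).
\]

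Setting $\eta\coloneqq\gamma_j\bigl(\epsilon N^\epsilon_l\Sop_\epsilon(\partial_l\overbar{u}_0)\bigr)$ so that $\gamma_j w_\epsilon=\gamma_j(u_\epsilon-u_0)-\eta$, I would use the subadditivity of $j^0(x;\cdot)$ in direction to split each piece, for instance
\[
j^0(\gamma_j u_0;\gamma_j w_\epsilon)\leq j^0(\gamma_j u_0;\gamma_j u_\epsilon-\gamma_j u_0)+j^0(\gamma_j u_0;-\eta),
\]
and analogously for $j^0(\gamma_j u_\epsilon;-\gamma_j w_\epsilon)$. Summing the two leading pieces invokes \cref{ass:B2} to produce $\alpha_j\Norm{\gamma_j(u_\epsilon-u_0)}^2_{V_j}$, while the corrections are dominated by $C\Norm{\eta}_{V_j}$ through \cref{ass:B1}, the uniform a priori bounds on $\Norm{u_\epsilon}_V$ and $\Norm{u_0}_V$, and the continuity of $\gamma_j$.

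The crucial quantitative input is then the sharp boundary estimate $\Norm{\eta}_{V_j}\leq C\epsilon\Norm{u_0}_{2,\Omega}$. I would obtain it from the thin-strip trace inequality
\[
\Norm{\xi}^2_{L^2(\partial\Omega)}\leq C\epsilon^{-1}\Norm{\xi}^2_{L^2(\Omega_\epsilon)}+C\epsilon\Norm{\nabla\xi}^2_{L^2(\Omega_\epsilon)}
\]
applied to $\xi=\epsilon N^\epsilon_l\Sop_\epsilon(\partial_l\overbar{u}_0)$: both $\Norm{\xi}_{L^2(\Omega_\epsilon)}$ and the leading component $\Norm{(\partial_iN_l)^\epsilon\Sop_\epsilon(\partial_l\overbar{u}_0)}_{L^2(\Omega_\epsilon)}$ of $\Norm{\nabla\xi}_{L^2(\Omega_\epsilon)}$ pick up an additional $\epsilon^{1/2}$ from the periodic boundary-layer bound \cref{lem:3.4}, yielding the claimed $O(\epsilon)$.

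Combining these ingredients with $\Norm{\gamma_j(u_\epsilon-u_0)}_{V_j}\leq c_j\Norm{\nabla w_\epsilon}_{0,\Omega}+\Norm{\eta}_{V_j}$ and $(a+b)^2\leq(1+\mu)a^2+(1+\mu^{-1})b^2$ for a suitably small $\mu>0$, I would absorb one factor of $\Norm{\nabla w_\epsilon}^2_{0,\Omega}$ into the coercivity constant using \cref{ass:C} (choosing $\mu$ so that $\Delta-\alpha_j c_j^2\mu>0$), and close the proof with a final Young's inequality on the remaining cross term $C\epsilon^{1/2}\Norm{u_0}_{2,\Omega}\Norm{\nabla w_\epsilon}_{0,\Omega}$. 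The main obstacle I anticipate is precisely the sharp boundary estimate on $\eta$: a routine interpolation trace would deliver only $\Norm{\eta}_{V_j}=O(\epsilon^{1/2})$, which would degrade the theorem to $\Norm{\nabla w_\epsilon}_{0,\Omega}=O(\epsilon^{1/4})$, so the periodic boundary-layer improvement encoded in \cref{lem:3.4} is indispensable.
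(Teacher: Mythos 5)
Your proposal is essentially sound and the sharp boundary estimate $\Norm{\eta}_{V_j}\le C\epsilon\Norm{u_0}_{2,\Omega}$ that you flag as the crucial ingredient does indeed hold: writing $\nabla\xi=(\partial_i N_l)^\epsilon\Sop_\epsilon(\partial_l\overbar{u}_0)+\epsilon N^\epsilon_l\Sop_\epsilon(\partial^2_{il}\overbar{u}_0)$ and feeding the first term into \cref{lem:3.4} and the second into \cref{lem:3.2} gives $\epsilon\Norm{\nabla\xi}^2_{0,\Omega_\epsilon}\le C\epsilon^2\Norm{u_0}^2_{2,\Omega}$, and \cref{lem:3.4} handles the $\epsilon^{-1}\Norm{\xi}^2_{0,\Omega_\epsilon}$ term similarly, so the thin-strip trace inequality closes as you intend. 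However, your route is genuinely different from the paper's. The paper never estimates a boundary trace of the corrector term at all: it replaces $u_0$ by $\tilde{u}_\epsilon=u_0+\epsilon N^\epsilon_l\Sop_\epsilon(\partial_l\overbar{u}_0)(1-\theta_\epsilon)$, where $\theta_\epsilon$ is the same cutoff already used in the proof of \cref{lem:key}; because $1-\theta_\epsilon$ vanishes in a neighborhood of $\partial\Omega$, one has $\gamma_j\tilde{u}_\epsilon=\gamma_j u_0$ exactly, and one tests \cref{lem:key} with $v=u_\epsilon-\tilde{u}_\epsilon$ instead of $v=w_\epsilon$. The two $j^0$ terms then read $j^0(\gamma_j\tilde{u}_\epsilon;\gamma_j(u_\epsilon-\tilde{u}_\epsilon))+j^0(\gamma_j u_\epsilon;\gamma_j(\tilde{u}_\epsilon-u_\epsilon))$, which is \emph{already} in the form required by \cref{ass:B2} with $x_1=\gamma_j\tilde{u}_\epsilon$, $x_2=\gamma_j u_\epsilon$, so no subadditivity, no splitting, and no appeal to \cref{ass:B1} is needed. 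The residual $r_\epsilon=\epsilon N^\epsilon_l\Sop_\epsilon(\partial_l\overbar{u}_0)\theta_\epsilon$ is estimated once in $\Norm{\nabla\cdot}_{0,\Omega}$ via \cref{lem:3.2} and \cref{lem:3.4} and added at the end.

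The trade-off is more than aesthetic. Your splitting produces, via \cref{ass:B1} and the uniform a priori bounds, correction terms of the form $(c_0+c_1\Norm{\gamma_j u_0}_{V_j}+c_1\Norm{\gamma_j u_\epsilon}_{V_j})\Norm{\eta}_{V_j}$. These are \emph{linear}, not quadratic, in the data, so after coercivity and Young's inequality you are left with an additive term proportional to $\epsilon\Norm{u_0}_{2,\Omega}$ rather than $\epsilon\Norm{u_0}_{2,\Omega}^2$; the final bound then has a $\sqrt{\Norm{u_0}_{2,\Omega}}$ component and a constant that also depends on $c_0,c_1$ and $\Norm{\tilde{f}}_{V^*}$, which is strictly weaker than the theorem's stated $C(\Omega,\kappa_1,\kappa_2,\Delta)\,\epsilon^{1/2}\Norm{u_0}_{2,\Omega}$. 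The paper's cutoff modification of the first-order term is precisely what makes these parasitic terms disappear, yielding the cleanly homogeneous estimate.
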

\begin{proof}
	Take $\theta_\epsilon$ defined as previous, and let $\tilde{u}_\epsilon=u_0+\epsilon N_l^\epsilon \Sop_\epsilon(\partial_l \overbar{u}_0)(1-\theta_\epsilon)$. Then $w_\epsilon = u_\epsilon-\tilde{u}_\epsilon-\epsilon N_l^\epsilon \Sop_\epsilon(\partial_l \overbar{u}_0)\theta_\epsilon \coloneqq u_\epsilon-\tilde{u}_\epsilon-r_\epsilon$. It is obvious to see $\tilde{u}_\epsilon \in V$ and $\gamma_j u_0 = \gamma_j \tilde{u_\epsilon}$. We use \cref{lem:3.1}-\cref{lem:3.4} to handle $r_\epsilon = \epsilon N^\epsilon_l \Sop_\epsilon(\partial_l \overbar{u}_0) \theta_\epsilon$:
	\[
	\begin{aligned}
	\Norm{\partial_i r_\epsilon}_{0,\Omega} \leq& \Norm{(\partial_i N_l)^\epsilon \Sop_\epsilon(\partial_l \overbar{u}_0)\theta_\epsilon}_{0,\Omega} + \Norm{N_l^\epsilon \epsilon \partial_i \theta_\epsilon \Sop_\epsilon(\partial_l \overbar{u}_0)}_{0,\Omega}\\
	&+\Norm{\epsilon N_l \theta_\epsilon \Sop_\epsilon(\partial_{il}^2 \overbar{u}_0)}_{0,\Omega} \\
	\leq& C(\Omega, \kappa_1, \kappa_2)\Brackets{\epsilon^{1/2}\Norm{u_0}_{2,\Omega}+\epsilon \Norm{u_0}_{2,\Omega}} \leq C(\Omega, \kappa_1, \kappa_2) \epsilon^{1/2} \Norm{u_0}_{2,\Omega}
	\end{aligned}.
	\]
	Substitute $v=u_\epsilon-\tilde{u}_\epsilon \in V$ into \cref{lem:key}, and recall $\gamma_j u_0 = \gamma_j \tilde{u_\epsilon}$, we have:
	\[
	\begin{aligned}
	&\int_{\Omega} A^\epsilon \nabla(u_\epsilon - \tilde{u}_\epsilon) \cdot \nabla (u_\epsilon -\tilde{u}_\epsilon) \\
	=& \int_\Omega A^\epsilon \nabla w_\epsilon \cdot \nabla (u_\epsilon -\tilde{u}_\epsilon) + \int_{\Omega} A^\epsilon \nabla r_\epsilon \cdot \nabla (u_\epsilon -\tilde{u}_\epsilon) \\
	\leq& j^0(\gamma_j u_0; \gamma_j (u_\epsilon-\tilde{u}_\epsilon))+j^0(\gamma_j u_\epsilon; \gamma_j(\tilde{u}_\epsilon-u_\epsilon))\\
	&+C\epsilon^{1/2}\Norm{u_0}_{2,\Omega} \Norm{\nabla(u_\epsilon -\tilde{u}_\epsilon)}_{0, \Omega}+\kappa_2 \Norm{\nabla r_\epsilon}_{0,\Omega} \Norm{\nabla (u_\epsilon-\tilde{u}_\epsilon)}_{0,\Omega} \\		
	\leq& j^0(\gamma_j \tilde{u}_\epsilon; \gamma_j (u_\epsilon-\tilde{u}_\epsilon))+j^0(\gamma_j u_\epsilon; \gamma_j(\tilde{u}_\epsilon-u_\epsilon))+C\epsilon^{1/2}\Norm{u_0}_{2,\Omega} \Norm{\nabla(u_\epsilon -\tilde{u}_\epsilon)}_{0,\Omega}\\
	\leq& \alpha_j c_j^2 \Norm{u_\epsilon-\tilde{u}_\epsilon}_V^2 + C\epsilon^{1/2} \Norm{u_0}_{2,\Omega} \Norm{u_\epsilon-\tilde{u}_\epsilon}_V
	\end{aligned}.
	\]
	By a direct calculation, we have $\Norm{u_\epsilon-\tilde{u}_{\epsilon}}_V \leq 1/\Delta C\epsilon^{1/2} \Norm{u_0}_{2,\Omega}$. Then $\Norm{\nabla w_\epsilon}_{0,\Omega} \leq C\epsilon^{1/2} \Norm{u_0}_{2,\Omega}$, $C$ depends on $\Omega, \kappa_1, \kappa_2, \Delta$, since $\Norm{\nabla w_\epsilon}_{0,\Omega} \leq \Norm{u_\epsilon-\tilde{u}_\epsilon}_V+ \Norm{\nabla r_\epsilon}_{0,\Omega}$.
\end{proof}
We have following corollary to quantify the $L^2$ convergence rate:
\begin{corollary}
	Let the assumptions \cref{ass:A}-\cref{ass:C} be satisfied, and $u_0 \in H^2(\Omega)$. Then:
	\[
	\Norm{u_\epsilon - u_0}_{0,\Omega} \leq C\epsilon^{1/2} \Norm{u_0}_{2,\Omega}.
	\]
	Here the constant $C$ depends on $\Omega,\kappa_1,\kappa_2,\Delta$
\end{corollary}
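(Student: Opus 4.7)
The plan is to piggy-back on the auxiliary function $\tilde{u}_\epsilon = u_0 + \epsilon N_l^\epsilon \Sop_\epsilon(\partial_l \overbar{u}_0)(1-\theta_\epsilon)$ introduced in the proof of the preceding theorem, rather than on $w_\epsilon$ itself. The reason is that $w_\epsilon$ does not vanish on $\Gamma_D$ (because of the corrector term), so it does not lie in $V$ and Poincar\'e's inequality is not directly available; by contrast, since $1-\theta_\epsilon \equiv 0$ in a neighborhood of $\partial\Omega$, the function $\tilde{u}_\epsilon$ inherits the vanishing trace of $u_0$ on $\Gamma_D$, which makes $u_\epsilon - \tilde{u}_\epsilon \in V$.

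First I would write the elementary splitting
\[
u_\epsilon - u_0 = (u_\epsilon - \tilde{u}_\epsilon) + (\tilde{u}_\epsilon - u_0) = (u_\epsilon - \tilde{u}_\epsilon) + \epsilon N_l^\epsilon \Sop_\epsilon(\partial_l \overbar{u}_0)(1-\theta_\epsilon),
\]
and estimate the two pieces in $L^2(\Omega)$ separately. For the first piece, since $u_\epsilon - \tilde{u}_\epsilon \in V$ and $\Gamma_D \neq \varnothing$, Poincar\'e's inequality gives $\Norm{u_\epsilon - \tilde{u}_\epsilon}_{0,\Omega} \leq C(\Omega)\Norm{u_\epsilon - \tilde{u}_\epsilon}_V$, and the preceding theorem's proof has already yielded $\Norm{u_\epsilon - \tilde{u}_\epsilon}_V \leq C\epsilon^{1/2}\Norm{u_0}_{2,\Omega}$, so this piece is $O(\epsilon^{1/2})$.

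For the second piece, I would apply \cref{lem:3.2} with the $1$-periodic function $f = N_l$ (whose norm $\Norm{N_l}_{0,Q}$ is controlled by $C(\kappa_1,\kappa_2)$ thanks to the corrector estimate) to the function $\partial_l \overbar{u}_0 \in L^2(\R^d)$, obtaining
\[
\Norm{N_l^\epsilon \Sop_\epsilon(\partial_l \overbar{u}_0)}_{0,\Omega} \leq C\Norm{N_l}_{0,Q}\Norm{\partial_l \overbar{u}_0}_{0,\R^d} \leq C(\Omega,\kappa_1,\kappa_2)\Norm{u_0}_{1,\Omega}.
\]
The pointwise bound $0 \leq 1-\theta_\epsilon \leq 1$ together with $\Norm{u_0}_{1,\Omega} \leq \Norm{u_0}_{2,\Omega}$ then gives that this piece is of size $O(\epsilon)\Norm{u_0}_{2,\Omega}$, which is of lower order than $O(\epsilon^{1/2})$.

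Adding the two contributions yields the stated bound. I do not expect a genuine obstacle here; the only subtle point is recognizing that one should work with $\tilde{u}_\epsilon$ rather than with the raw first-order expansion $u_0 + \epsilon N_l^\epsilon \Sop_\epsilon(\partial_l \overbar{u}_0)$, in order to stay inside $V$ and thus invoke Poincar\'e. Everything else reduces to the $L^2$ bounds on products of oscillatory periodic functions and smoothed gradients already packaged in \cref{lem:3.1} and \cref{lem:3.2}.
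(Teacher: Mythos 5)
Your proof is correct and follows essentially the same route as the paper: decompose $u_\epsilon - u_0 = (u_\epsilon - \tilde{u}_\epsilon) + (\tilde{u}_\epsilon - u_0)$, bound the first piece by Poincar\'e together with the $V$-norm estimate already established in the preceding theorem, and observe that the second piece is $O(\epsilon)$. The paper compresses the second estimate into ``it is easy to show''; your invocation of \cref{lem:3.2} with $f = N_l$ and the extension bound $\Norm{\overbar{u}_0}_{1,\R^d} \le C\Norm{u_0}_{1,\Omega}$ just makes that step explicit, and your opening remark about why one must work with $\tilde{u}_\epsilon$ rather than $w_\epsilon$ (to stay in $V$) correctly identifies the one subtlety.
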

\begin{proof}
	Since $\Gamma_D \neq \varnothing$, by Poincar\'{e} inequality, $\Norm{u_\epsilon-\tilde{u}_\epsilon}_{0,\Omega} \leq C(\Omega) \Norm{u_\epsilon-\tilde{u}_\epsilon}_V \leq C\epsilon^{1/2} \Norm{u_0}_{2,\Omega}$. It is easy to show $\Norm{\tilde{u}_\epsilon - u_0}_{0,\Omega}\leq C(\Omega) \epsilon \Norm{u_0}_{1,\Omega}$. Then the triangle relation tells us $\Norm{u_\epsilon - u_0}_{0,\Omega} \leq C\epsilon^{1/2}\Norm{u_0}_{2,\Omega}$.
\end{proof}
\begin{remark}
	This $L^2$ convergence rate presented here is not optimal while We conjecture that best results is $\Norm{u_\epsilon-u_0}_{0,\Omega} \leq C\epsilon\Norm{u_0}_{2,\Omega}$ (see \cite{Shen2016}), and \cref{sec:experiments} supports this claim. However, gradient information is far more valuable in application, this is why we mainly consider norm $\Seminorm{\cdot}_{1,\Omega}$ or $\Norm{\nabla \cdot}_{0,\Omega}$.
\end{remark}

\section{A Special Case---Robin Problem}
\label{sec:robin}

The assumptions \cref{ass:B1}-\cref{ass:C} are relatively general to cover considerable situations. For example, let us consider a simplified version of \cref{eq:contact problem hemiform}:
\begin{proposition} \label{pp:simplified contact problem}
Let $\mathrm{B}(x)$ be a function on $\R$ satisfying uniform Lipschitz condition, that is $\Seminorm{\mathrm{B}(x)-\mathrm{B}(y)} \leq L_\mathrm{B} \Seminorm{x-y} \forall x, y \in \R$, and the other notations are defined as previous. Then the following hemivariational inequality (variational equality) is solvable:
\begin{equation}\label{eq:simplified contact problem}
\left\{
	\begin{aligned}
		&\text{Find } u \in V, \text{ s.t. } \forall v \in V \\
		&\int_\Omega A \nabla u_\epsilon \cdot \nabla v + \int_{\Gamma_C} \mathrm{B}\Brackets{u} v = \FuncAction{f}{v}+\int_{\Gamma_N} gv 
	\end{aligned} ~~,	
\right.
\end{equation}
if $\kappa_1-L_\mathrm{B} c_j^2 > 0$.
\end{proposition}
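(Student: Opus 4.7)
The plan is to cast \cref{eq:simplified contact problem} as a special instance of the hemivariational form \cref{eq:contact problem hemiform} by exhibiting a smooth potential $j$ whose Clarke subdifferential reproduces $\mathrm{B}$, and then to verify that the Lipschitz hypothesis on $\mathrm{B}$ forces assumptions \textbf{B} and \textbf{C}. Once this translation is in place, existence and uniqueness are immediate from the same theorem of \cite{Han2017} that was already invoked to solve \cref{eq:contact problem hemiform}, so no new analytical machinery is needed.

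The first step is to set $J(s)\coloneqq \int_0^s \mathrm{B}(t)\,\dx t$ and define $j(u)\coloneqq \int_{\Gamma_C} J(u)$ for $u \in V_j = L^2(\Gamma_C)$. Because $\mathrm{B}$ is Lipschitz, $J$ is of class $C^{1,1}(\R)$ with $J'=\mathrm{B}$, so $j$ is continuously Gateaux-differentiable on $V_j$, hence locally Lipschitz, with gradient $u \mapsto \mathrm{B}(u)$; the (local) Lipschitz property on $V_j$ follows from the elementary estimate $\Seminorm{J(s_1)-J(s_2)}\leq \bigl(\Seminorm{\mathrm{B}(0)}+L_\mathrm{B}\max\{\Seminorm{s_1},\Seminorm{s_2}\}\bigr)\Seminorm{s_1-s_2}$ combined with Cauchy--Schwarz on $\Gamma_C$. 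Consequently, the Clarke subdifferential collapses to $\partial j(u)=\{\mathrm{B}(u)\}$ and $j^0(u;v)=\int_{\Gamma_C}\mathrm{B}(u)v=-j^0(u;-v)$. Testing the hemivariational form with both $v$ and $-v$ then collapses the two opposite inequalities into the equality \cref{eq:simplified contact problem}, so the two formulations are equivalent.

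The second step is to check the standing assumptions for this specific $j$. The bound $\Seminorm{\mathrm{B}(s)}\leq \Seminorm{\mathrm{B}(0)}+L_\mathrm{B}\Seminorm{s}$ integrated over $\Gamma_C$ yields $\Norm{\mathrm{B}(u)}_{0,\Gamma_C}\leq \Seminorm{\Gamma_C}^{1/2}\Seminorm{\mathrm{B}(0)}+L_\mathrm{B}\Norm{u}_{V_j}$, which verifies \cref{ass:B1} with $c_0=\Seminorm{\Gamma_C}^{1/2}\Seminorm{\mathrm{B}(0)}$ and $c_1=L_\mathrm{B}$. For \cref{ass:B2}, the smoothness of $j$ gives the clean identity
\[
j^0(x_1;x_2-x_1)+j^0(x_2;x_1-x_2)=-\int_{\Gamma_C}(\mathrm{B}(x_1)-\mathrm{B}(x_2))(x_1-x_2)\leq L_\mathrm{B}\Norm{x_1-x_2}_{V_j}^2,
\]
so one may take $\alpha_j = L_\mathrm{B}$. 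With this choice, \cref{ass:C} reads $\kappa_1 - L_\mathrm{B} c_j^2 > 0$, which is precisely the hypothesis of the proposition.

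With \textbf{A}, \textbf{B}, and \textbf{C} all in force, the abstract existence/uniqueness result quoted in \cref{sec:preliminaries} applies directly and delivers a unique $u \in V$ solving the associated hemivariational inequality, which by the equivalence established above is the desired solution of \cref{eq:simplified contact problem}. No real obstacle is anticipated: the argument is essentially a dictionary between the smooth Lipschitz datum $\mathrm{B}$ and the nonsmooth potential framework. The one place where care is required is the passage from pointwise Lipschitz continuity of $\mathrm{B}$ to local Lipschitz continuity of $j$ as a map $V_j \to \R$, but this is handled by the mean-value estimate on $J$ indicated above.
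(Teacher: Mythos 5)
Your proof is correct and takes the route the paper intends but does not spell out: the paper treats \cref{pp:simplified contact problem} as an immediate specialization of the hemivariational framework of \cite{Han2017}, only noting in passing that ``strongly monotone operator theory'' gives an alternative proof. Your construction of the smooth potential $j(u)=\int_{\Gamma_C}\int_0^{u}\mathrm{B}$, the verification that $\partial j(u)=\{\mathrm{B}(u)\}$ so that $j^0(u;v)=\int_{\Gamma_C}\mathrm{B}(u)v=-j^0(u;-v)$ collapses the inequality to an equality, and the check that assumptions \textbf{B}, \textbf{C} hold with $\alpha_j=L_\mathrm{B}$ supply exactly the details the paper omits.
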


We can reprove this proposition by directly utilizing strongly monotone operator theory (see \cite{Zeidler1995} sect. 2.14). Moreover, in a special case--- Robin problems, we can obtain $L^2(\Omega)$ estimation by utilizing the bilinearity of its variational form.

Let $A^\epsilon$ define as previous and satisfy the assumption \cref{ass:A}, for simplicity, we state Robin problem on the whole boundary $\Gamma$:
\begin{equation}
\left\{
\begin{aligned}
-\Div(A^\epsilon \nabla u_\epsilon) = f ~~~~ &\text{ in } L^2(\Omega) \\
\bm{n}\cdot A^\epsilon \nabla u_\epsilon + \alpha(\bm{x})u_\epsilon = g ~~~~ &\text{ in } L^2(\Gamma)
\end{aligned} .
\right.
\end{equation}
And the corresponding variational form is:
\begin{equation} \label{eq:robin variation}
\left\{
\begin{aligned}
&\text{Find } u_\epsilon \in H^1(\Omega), \text{ s.t. } \forall v \in H^1(\Omega) \\
&\int_{\Omega} A^\epsilon \nabla u_\epsilon \cdot \nabla v + \int_{\Gamma} \alpha u_\epsilon v = \int_{\Omega} fv + \int_{\Gamma} gv
\end{aligned} .
\right.
\end{equation}
We need assumption 
\[
0 < \alpha_1 \leq \alpha(\bm{x}) \leq \alpha_2 < \infty ~~ \forall \bm{x} \in \Gamma 
\] 
to prove this bilinear form is coercive, then Lax-Milgram theorem asserts the solvability of  \cref{eq:robin variation}. We have following lemma, the proof is postponed in appendix.
\begin{lemma}\label{lem:coercive for Robin}
	Let $\Omega$ be a Lipschitz domain and $\Gamma$ its boundary. Then there exists a constant $C(\Omega)$, such that $\forall \psi \in H^1(\Omega)$:
	\[
	C(\Omega) \Norm{\psi}_{1,\Omega}^2 \leq \int_{\Omega} \Seminorm{\nabla \psi}^2 + \int_{\Gamma} \psi^2 .
	\]
\end{lemma}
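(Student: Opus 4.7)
The plan is to reduce the desired coercivity to a Poincar\'e-type inequality of the form $\Norm{\psi}_{0,\Omega}^2 \leq C(\Omega)\Brackets{\Norm{\nabla \psi}_{0,\Omega}^2 + \Norm{\psi}_{0,\Gamma}^2}$. Once this is established, adding $\Norm{\nabla\psi}_{0,\Omega}^2$ to both sides and recalling that $\Norm{\psi}_{1,\Omega}^2 = \Norm{\psi}_{0,\Omega}^2 + \Norm{\nabla \psi}_{0,\Omega}^2$ yields the lemma, up to a constant depending only on $\Omega$. So the entire task boils down to proving this Poincar\'e-type bound.

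The natural route is a compactness/contradiction argument in the spirit of the classical Poincar\'e inequality for $H^1/\R$. Suppose the inequality fails; then there exists a sequence $\{\psi_n\} \subset H^1(\Omega)$ with $\Norm{\psi_n}_{0,\Omega} = 1$ but $\Norm{\nabla \psi_n}_{0,\Omega}^2 + \Norm{\psi_n}_{0,\Gamma}^2 \to 0$. In particular $\{\psi_n\}$ is bounded in $H^1(\Omega)$, so by the Rellich--Kondrachov theorem I can extract a subsequence with $\psi_n \rightharpoonup \psi$ in $H^1(\Omega)$ and $\psi_n \to \psi$ strongly in $L^2(\Omega)$. Weak lower semicontinuity of the norm gives $\Norm{\nabla \psi}_{0,\Omega} = 0$, so $\psi$ is constant on each connected component of $\Omega$. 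Compactness of the trace operator $H^1(\Omega) \to L^2(\Gamma)$ (the same compactness fact already used in the paper for $\gamma_j$, guaranteed by Lipschitz regularity of $\Gamma$) forces $\psi_n|_\Gamma \to \psi|_\Gamma$ in $L^2(\Gamma)$; combined with $\Norm{\psi_n}_{0,\Gamma} \to 0$ this gives $\psi|_\Gamma = 0$. A piecewise-constant function with zero trace on the nontrivial boundary of every component must vanish, hence $\psi \equiv 0$, contradicting $\Norm{\psi}_{0,\Omega} = \lim_n \Norm{\psi_n}_{0,\Omega} = 1$.

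The main subtlety, and really the only non-mechanical point, is the last step: when $\Omega$ has several connected components, I need each component to meet $\Gamma$ on a set of positive measure so that a locally constant $\psi$ with $\psi|_\Gamma = 0$ in $L^2(\Gamma)$ must indeed be zero. For an open bounded Lipschitz $\Omega$ every component has nonempty (in fact positive-measure) boundary, so this step goes through; I would note this briefly rather than belabor it. An alternative, more hands-on path would be to invoke the standard trace inequality $\Norm{\psi}_{0,\Gamma}^2 \leq C(\Omega)\Norm{\psi}_{0,\Omega}\Norm{\psi}_{1,\Omega}$ together with the Poincar\'e--Wirtinger inequality applied to $\psi - \fint_\Omega \psi$ to control the mean of $\psi$ by its boundary $L^2$ norm, but the compactness argument is cleaner and does not hide constants inside other inequalities, so I would present that one.
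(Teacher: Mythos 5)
Your argument is correct and follows essentially the same compactness-and-contradiction route as the paper's proof: normalize a failing sequence, extract a subsequence converging strongly in $L^2(\Omega)$ and weakly in $H^1(\Omega)$, deduce $\nabla\psi_0 = 0$ so $\psi_0$ is constant, and derive a contradiction from the vanishing of the boundary term. The only technical variation is at that last step: the paper passes to the limit in $\int_\Gamma \psi_0\psi_n$ using just boundedness of the trace pairing on $H^1(\Omega)$ together with weak $H^1$-convergence, whereas you invoke compactness of the trace $H^1(\Omega)\to L^2(\Gamma)$ to obtain $\psi_n|_\Gamma \to 0$ strongly; both are valid, and you sensibly flag the connectedness issue that the paper leaves implicit in writing $\psi_0 \equiv C$.
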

Similarly, we can prove 
\begin{theorem}
	The solutions $u_\epsilon$ of Robin problems \cref{eq:robin variation} converge weakly to $u^0$, while $u^0$ is the solution of homogenized Robin problem:
	\begin{equation}
	\left\{
	\begin{aligned}
	-\Div(\hat{A}\nabla u_0) = f ~~~~ &\text{ in } L^2(\Omega) \\
	\bm{n}\cdot \hat{A}\nabla u_0+\alpha(\bm{x})u_0=g ~~~~ &\text{ in } L^2(\Gamma)
	\end{aligned} .
	\right.
	\end{equation}
	And the associated variation form is:
	\begin{equation}\label{eq:homogenized robin problem variation form}
	\left\{
	\begin{aligned}
	&\text{Find} ~~ u_0 \in H^1(\Omega) \text{ s.t. } \forall v \in H^1(\Omega)\\
	&\int_{\Omega}\hat{A} \nabla u_0 \cdot \nabla v + \int_{\Gamma} \alpha u_0 v = \int_{\Omega}fv + \int_{\Gamma} gv
	\end{aligned} .
	\right.
	\end{equation}
\end{theorem}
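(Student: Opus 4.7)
The plan is to mirror the homogenization argument already given for the contact problem, with the hemivariational term replaced by the linear boundary form $\int_\Gamma \alpha u_\epsilon v$. The new ingredients specific to the Robin case are the compactness of the trace $H^1(\Omega) \to L^2(\Gamma)$ and the coercivity statement of \cref{lem:coercive for Robin}; everything else essentially reuses the machinery of Section \ref{sec:homogenization}.

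First I would establish a uniform bound $\Norm{u_\epsilon}_{1,\Omega} \leq C$ independent of $\epsilon$. Testing \cref{eq:robin variation} with $v=u_\epsilon$ gives
\[
\kappa_1 \Norm{\nabla u_\epsilon}_{0,\Omega}^2 + \alpha_1 \Norm{u_\epsilon}_{0,\Gamma}^2 \leq \Norm{f}_{0,\Omega}\Norm{u_\epsilon}_{0,\Omega} + \Norm{g}_{0,\Gamma}\Norm{u_\epsilon}_{0,\Gamma},
\]
and combining the left side via \cref{lem:coercive for Robin} with a standard Young inequality on the right yields the bound. By reflexivity, extract a subsequence (not relabeled) with $u_\epsilon \rightharpoonup u^* \in H^1(\Omega)$; then $\nabla u_\epsilon \rightharpoonup \nabla u^*$ in $L^2(\Omega)^d$, and since $\Norm{A^\epsilon \nabla u_\epsilon}_{0,\Omega} \leq \kappa_2 \Norm{u_\epsilon}_{1,\Omega}$ we may also assume $A^\epsilon \nabla u_\epsilon \rightharpoonup \bm{p}_0$ in $L^2(\Omega)^d$. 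Compactness of the trace gives $u_\epsilon \to u^*$ strongly in $L^2(\Gamma)$.

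Next I would identify $\bm{p}_0 = \hat{A}\nabla u^*$ via the div-curl lemma exactly as in the contact theorem. Testing \cref{eq:robin variation} against $v \in C^\infty_0(\Omega)$ gives $-\Div(A^\epsilon \nabla u_\epsilon) = f$ in the distributional sense, so $\Div(A^\epsilon \nabla u_\epsilon)$ is constant in $\epsilon$ and trivially converges in $H^{-1}(\Omega)$. Pairing with the oscillating gradient field $\nabla(\epsilon N^\epsilon_l \lambda_l + \bm{\lambda}\cdot\bm{x})$, whose divergence (against $A^\epsilon$) vanishes by the corrector equation \cref{eq:correctors} and which weakly converges to $\bm{\lambda}$ by \cref{thm:oscillating converge}, the symmetry of $A^\epsilon$ and $\hat{A}$ lets one identify $\bm{p}_0 \cdot \bm{\lambda} = \hat{A}\nabla u^* \cdot \bm{\lambda}$ for every $\bm{\lambda} \in \R^d$.

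Finally, passing to the limit in \cref{eq:robin variation} with a fixed $v \in H^1(\Omega)$: the volume term tends to $\int_\Omega \hat{A}\nabla u^* \cdot \nabla v$ by the flux convergence, the boundary term tends to $\int_\Gamma \alpha u^* v$ by strong trace convergence, and the right-hand side is independent of $\epsilon$. Hence $u^*$ solves \cref{eq:homogenized robin problem variation form}. Since that problem has a unique solution (Lax--Milgram applied with \cref{lem:coercive for Robin}), $u^*=u_0$, and because the limit is independent of the subsequence the whole family converges weakly. The only nontrivial step is the div-curl identification of $\bm{p}_0$, which is imported verbatim from the proof in \cref{sec:homogenization}; the Robin-specific boundary term is handled essentially for free by trace compactness.
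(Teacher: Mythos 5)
Your proof is correct and takes essentially the same route the paper intends: the paper itself omits the argument, flagging it only with ``Similarly, we can prove,'' and your reconstruction is precisely what that gestures at. You correctly replace the upper-semicontinuity argument for $j^0$ used in the hemivariational case by the simpler observation that the boundary form $\int_\Gamma \alpha u_\epsilon v$ is linear, so strong trace convergence (from compactness of $H^1(\Omega) \hookrightarrow L^2(\Gamma)$) passes it to the limit directly, while the uniform a priori bound, the div-curl identification of the flux limit $\bm{p}_0 = \hat{A}\nabla u^*$, and the uniqueness/subsequence argument via Lax--Milgram and \cref{lem:coercive for Robin} are imported unchanged from \cref{sec:homogenization}.
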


Estimation \cref{lem:boundary estimation} is cited from Thm 1.5.1.10 in \cite{Grisvard2011}, we use this lemma to prove $O(\epsilon^{1/2})$ convergence rate in $H^1(\Omega)$ norm.
\begin{lemma}\label{lem:boundary estimation}
	Let $\Omega$ be a Lipschitz domain in $\R^d$, then for $u \in H^1(\Omega)$,
	\[
	\int_{\Gamma} u^2 \leq C(\Omega) \Brackets{ t\int_{\Omega} \Seminorm{\nabla u}^2 + t^{-1} \int_{\Omega} u^2} .
	\]
	Here $t$ can choose arbitrarily in $(0, 1)$.
\end{lemma}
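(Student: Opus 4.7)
The plan is to prove the trace inequality via a divergence-theorem argument built around a well-chosen auxiliary vector field. For a Lipschitz domain $\Omega$, a standard construction (via boundary charts and a partition of unity) produces a field $\bm{\beta} \in L^\infty(\Omega)^d$ with $\Div \bm{\beta} \in L^\infty(\Omega)$ satisfying $\bm{\beta}\cdot \bm{n} \geq c_0 > 0$ almost everywhere on $\Gamma$, where $c_0$, $\Norm{\bm{\beta}}_{L^\infty}$, and $\Norm{\Div \bm{\beta}}_{L^\infty}$ are all controlled solely by $\Omega$. Such $\bm{\beta}$ exists because in each boundary chart one can straighten the boundary and take the transverse coordinate direction, then glue via a partition of unity subordinate to the boundary cover.

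First I would reduce to $u \in C^\infty(\overbar{\Omega})$ by density of smooth functions in $H^1(\Omega)$ together with continuity of the trace operator, so that every integral makes classical sense. Applying the divergence theorem on the Lipschitz domain to the field $u^2 \bm{\beta}$ then yields
\[
c_0 \int_{\Gamma} u^2 \leq \int_{\Gamma} u^2 (\bm{\beta}\cdot \bm{n}) = \int_{\Omega} \Div(u^2 \bm{\beta}) = 2\int_{\Omega} u\, \bm{\beta}\cdot \nabla u + \int_{\Omega} u^2 \Div \bm{\beta}.
\]

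For the first volume integral I would invoke Young's inequality $2|ab| \leq t a^2 + t^{-1} b^2$ with $a = \Seminorm{\nabla u}$ and $b = \Seminorm{\bm{\beta}}\Seminorm{u}$, absorbing $\Norm{\bm{\beta}}_{L^\infty}^2$ into the constant, to obtain a bound of the form $t \int_\Omega \Seminorm{\nabla u}^2 + C(\Omega) t^{-1} \int_\Omega u^2$. The second volume integral is immediately controlled by $\Norm{\Div \bm{\beta}}_{L^\infty} \int_\Omega u^2 \leq C(\Omega) t^{-1} \int_\Omega u^2$, using $t \in (0,1)$ so that $t^{-1} > 1$. Combining the two and dividing through by $c_0$ yields exactly the stated estimate with a constant depending only on $\Omega$.

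The main obstacle is the construction of the transverse field $\bm{\beta}$; this is the step that genuinely uses the Lipschitz hypothesis, and while it is routinely handled by the chart-and-partition-of-unity argument indicated above, it is the only place where the structure of $\partial\Omega$ enters in a nontrivial way. This is precisely the delicate piece already carried out in Grisvard, which is why the present paper quotes the inequality rather than reproving it.
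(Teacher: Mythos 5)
The paper does not prove this lemma at all; it simply cites it as Theorem 1.5.1.10 of Grisvard, so there is no in-paper argument to compare against. Your proof is, however, correct and is the standard route to this trace interpolation inequality: construct a Lipschitz vector field $\bm{\beta}$ transversal to $\Gamma$ (i.e.\ $\bm{\beta}\cdot\bm{n}\geq c_0>0$ a.e.\ on $\Gamma$, with $c_0$ and the size of $\bm{\beta}$ depending only on $\Omega$), apply the divergence theorem to $u^2\bm{\beta}$ for smooth $u$, split $\Div(u^2\bm{\beta})=2u\,\bm{\beta}\cdot\nabla u+u^2\Div\bm{\beta}$, run Young's inequality with parameter $t$ on the first term, absorb the second term using $t^{-1}>1$ for $t\in(0,1)$, and finish by density. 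One small precision worth tightening: you should ask that $\bm{\beta}$ be globally Lipschitz (say $\bm{\beta}\in W^{1,\infty}(\Omega)^d$ or $C^{0,1}(\overbar{\Omega})^d$), not merely $\bm{\beta}\in L^\infty$ with $\Div\bm{\beta}\in L^\infty$, so that the pointwise statement $\bm{\beta}\cdot\bm{n}\geq c_0$ on $\Gamma$ makes literal sense (a merely bounded field with distributional divergence has only an $H^{-1/2}(\Gamma)$ normal trace) and so that the Gauss--Green identity for $u^2\bm{\beta}$ on the Lipschitz domain is immediate; the chart-and-partition-of-unity construction you sketch automatically produces such a Lipschitz field, so this is a wording fix rather than a gap.
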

Let $w_\epsilon$ be defined as previous, we have a parallel version of \cref{lem:key}:
\begin{lemma} \label{lem:Robin problem estimation 1/2}
	Assume $u_0 \in H^2(\Omega)$, then $\forall v \in H^1(\Omega)$
	\[
	\int_\Omega A^\epsilon \nabla w_\epsilon \cdot \nabla v + \int_{\Gamma} \alpha w_\epsilon v \leq C(\Omega, \kappa_1, \kappa_2, \alpha_1, \alpha_2) \Norm{u_0}_{2,\Omega} \Brackets{\epsilon^{1/2}\Norm{\nabla v}_{0,\Omega_{2\epsilon}}+\epsilon\Norm{\nabla v}_{0,\Omega}+\epsilon \Norm{v}_{0, \Gamma}} .
	\]
\end{lemma}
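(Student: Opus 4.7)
The plan is to follow the structure of the proof of \cref{lem:key}, replacing the hemivariational inequality machinery by the bilinear identity that defines the Robin problem. First I would subtract the variational form \cref{eq:robin variation} and its homogenized version \cref{eq:homogenized robin problem variation form} tested against the same $v \in H^1(\Omega)$, then decompose $u_\epsilon - u_0 = w_\epsilon + \epsilon N_l^\epsilon \Sop_\epsilon(\partial_l \overbar{u}_0)$ and rearrange. Expanding the volume gradient exactly as in \cref{lem:key} produces
\[
	\int_\Omega A^\epsilon \nabla w_\epsilon \cdot \nabla v + \int_\Gamma \alpha w_\epsilon v = J_1 + J_2 - \int_\Gamma \alpha \epsilon N_l^\epsilon \Sop_\epsilon(\partial_l \overbar{u}_0) v,
\]
where $J_1,J_2$ are precisely the two volume remainders analysed in the proof of \cref{lem:key}. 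That analysis can be quoted verbatim and supplies the $C\Norm{u_0}_{2,\Omega}\Brackets{\epsilon^{1/2}\Norm{\nabla v}_{0,\Omega_{2\epsilon}}+\epsilon\Norm{\nabla v}_{0,\Omega}}$ contribution, so the only genuinely new task is to bound the boundary correction by $C\epsilon\Norm{u_0}_{2,\Omega}\Norm{v}_{0,\Gamma}$.

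Writing $\phi_\epsilon = N_l^\epsilon \Sop_\epsilon(\partial_l \overbar{u}_0)$ and using $\alpha\leq\alpha_2$, the boundary correction is controlled as soon as one proves the uniform trace bound $\Norm{\phi_\epsilon}_{0,\Gamma} \leq C\Norm{u_0}_{2,\Omega}$. To obtain this bound I would reuse the cutoff $\theta_\epsilon$ constructed in the proof of \cref{lem:key}: it equals $1$ on a boundary strip of width $\epsilon$ (in particular on $\Gamma$), vanishes outside $\Omega_{2\epsilon}$, and satisfies $\Norm{\nabla\theta_\epsilon}_{L^\infty(\R^d)}\leq C/\epsilon$. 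Because $\theta_\epsilon\phi_\epsilon$ has trace $\phi_\epsilon$ on $\Gamma$, applying \cref{lem:boundary estimation} to $\theta_\epsilon\phi_\epsilon$ reduces the problem to controlling the $L^2$ norm and the gradient of $\phi_\epsilon$ on the strip $\Omega_{2\epsilon}$.

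Both of these are within reach of the tools already developed. \cref{lem:3.4} applied on $\tilde\Omega_{2\epsilon}$ gives $\Norm{\phi_\epsilon}^2_{0,\Omega_{2\epsilon}} \leq C\epsilon\Norm{u_0}^2_{2,\Omega}$ at once. For the gradient I would differentiate to get $\partial_i \phi_\epsilon = \epsilon^{-1}(\partial_i N_l)^\epsilon \Sop_\epsilon(\partial_l \overbar{u}_0) + N_l^\epsilon \Sop_\epsilon(\partial^2_{il}\overbar{u}_0)$, control the first piece by another use of \cref{lem:3.4} and the second by \cref{lem:3.2}, yielding $\Norm{\nabla\phi_\epsilon}^2_{0,\Omega_{2\epsilon}} \leq C\epsilon^{-1}\Norm{u_0}^2_{2,\Omega}$. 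Inserting these two estimates together with $\Norm{\nabla\theta_\epsilon}_{L^\infty(\R^d)}\leq C/\epsilon$ into \cref{lem:boundary estimation} applied to $\theta_\epsilon\phi_\epsilon$ with the balancing choice $t=\epsilon$ gives $\Norm{\phi_\epsilon}_{0,\Gamma} \leq C\Norm{u_0}_{2,\Omega}$, and multiplying through by $\alpha_2\epsilon$ closes the estimate.

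The main obstacle will be precisely this last step: a naive application of \cref{lem:boundary estimation} directly to $\phi_\epsilon$ using only the global bounds $\Norm{\phi_\epsilon}_{0,\Omega}\leq C\Norm{u_0}_{1,\Omega}$ and $\Norm{\nabla\phi_\epsilon}_{0,\Omega}\leq C\epsilon^{-1}\Norm{u_0}_{2,\Omega}$ delivers only $\Norm{\phi_\epsilon}_{0,\Gamma} \leq C\epsilon^{-1/2}\Norm{u_0}_{2,\Omega}$ and therefore a degraded $\epsilon^{1/2}\Norm{v}_{0,\Gamma}$ factor in the conclusion. Localising to $\Omega_{2\epsilon}$ via $\theta_\epsilon$ is what unlocks the periodic strip estimate \cref{lem:3.4} on $\phi_\epsilon$, and the extra $\epsilon^{1/2}$ gained there is precisely what upgrades the boundary contribution from $\epsilon^{1/2}\Norm{v}_{0,\Gamma}$ to the sharp $\epsilon\Norm{v}_{0,\Gamma}$ required by the statement.
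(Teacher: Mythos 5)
Your proposal is correct and follows essentially the same route as the paper: both reduce the problem to bounding the new boundary term $\int_\Gamma \epsilon\alpha N_l^\epsilon \Sop_\epsilon(\partial_l \overbar{u}_0)v$, both insert the cutoff $\theta_\epsilon$ (which equals $1$ on $\Gamma$) so that the trace inequality \cref{lem:boundary estimation} with the balanced choice $t=\epsilon$ can be combined with the strip estimate \cref{lem:3.4} and \cref{lem:3.2}. The only cosmetic difference is that the paper applies the trace inequality directly to $\epsilon\theta_\epsilon N_l^\epsilon \Sop_\epsilon(\partial_l\overbar{u}_0)$ while you factor out the $\epsilon$ and prove the uniform bound $\Norm{\phi_\epsilon}_{0,\Gamma}\leq C\Norm{u_0}_{2,\Omega}$ first, which is the same computation modulo normalization.
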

\begin{proof}
	Compare with the proof of \cref{lem:key}, we are left to show:
	\[
	J_3 \coloneqq \int_{\Gamma} \epsilon \alpha N^\epsilon_l \Sop_\epsilon(\partial_l \overbar{u}_0) v \leq C \epsilon \Norm{u_0}_{2,\Omega} \Norm{v}_{0,\Gamma}  .
	\]
	By calculation:
	\[
	\Seminorm{J_3} \leq C\int_{\Gamma} \Seminorm{\epsilon N^\epsilon_l \Sop_\epsilon(\partial_l \overbar{u}_0)\theta_\epsilon v} \leq C\Norm{\epsilon N^\epsilon_l \Sop_\epsilon(\partial_l \overbar{u}_0)}_{0,\Gamma} \Norm{v}_{0,\Gamma} .
	\]
	Then
	\[
	\begin{aligned}
	& \int_{\Gamma} \Seminorm{\epsilon N^\epsilon_l \Sop_\epsilon(\partial_l \overbar{u}_0)\theta_\epsilon}^2\\  
	\underset{\text{use }  \eqref{lem:boundary estimation}}{\leq}&   Ct \sum_i \int_\Omega \Seminorm{(\partial_i N_l)^\epsilon \Sop_\epsilon(\partial_l \overbar{u}_0)\theta_\epsilon}^2+\Seminorm{N^\epsilon_l \Sop_\epsilon(\partial_l \overbar{u}_0)\epsilon \partial_i \theta_\epsilon}^2+\epsilon^2 \Seminorm{N^\epsilon_l \Sop_\epsilon(\partial^2_{il}\overbar{u}_0)\theta_\epsilon}^2 \\
	&+ t^{-1}\epsilon^2 \int_{\Omega} \Seminorm{N^\epsilon_l \Sop_\epsilon(\partial_l \overbar{u}_0)\theta_\epsilon}^2\\
	\underset{\text{take $t=\epsilon$}}{\leq} &C\epsilon \sum_i \int_{\tilde{\Omega}_{2\epsilon}}\Seminorm{(\partial_i N_l)^\epsilon \Sop_\epsilon(\partial_l \overbar{u}_0)}^2+\Seminorm{N^\epsilon_l \Sop_\epsilon(\partial_l \overbar{u}_0)}^2+\epsilon^2\int_{\Omega}\Seminorm{N^\epsilon_l \Sop_\epsilon(\partial^2_{il}\overbar{u}_0)}^2 \\
	&+ \epsilon \int_{\tilde{\Omega}_{2\epsilon}} \Seminorm{N^\epsilon_l \Sop_\epsilon(\partial_l \overbar{u}_0)}^2 \\
	\underset{\text{use}\eqref{lem:3.4}\eqref{lem:3.2}}{\leq} &C\epsilon^2 \Norm{u}_{2,\Omega}^2+\epsilon^3 \Norm{u_0}_{2,\Omega}^2 \leq C\epsilon^2 \Norm{u_0}^2_{2,\Omega}
	\end{aligned} .
	\]
\end{proof}
Then, we have:
\begin{theorem} \label{thm:Robin weakly conv}
	Let $w_\epsilon = u_\epsilon - u_0-\epsilon N^\epsilon_l \Sop_\epsilon(\partial_l \overbar{u}_0)$, $u_\epsilon$ and $u_0$ be the solution of \cref{eq:robin variation} and \cref{eq:homogenized robin problem variation form} respectively. Assume $u_0(\Omega) \in H^2(\Omega)$, then
	\[
	\int_{\Omega} \Seminorm{\nabla w_\epsilon}^2 + \int_{\Gamma} \alpha w_\epsilon^2 \leq C(\Omega, \kappa_1, \kappa_2, \alpha_1, \alpha_2) \epsilon \Norm{u_0}_{2,\Omega}^2 .
	\]
	
	As a corollary, we have
	\[
	\begin{aligned}
	\Norm{\nabla w_\epsilon}_{0,\Omega} &\leq C \epsilon^{1/2} \Norm{u_0}_{2,\Omega} , \\
	\Norm{u_\epsilon - u_0}_{0,\Omega} &\leq C \epsilon^{1/2} \Norm{u_0}_{2,\Omega} .
	\end{aligned}	
	\]
\end{theorem}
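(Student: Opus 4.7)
My plan is to feed the inequality from \cref{lem:Robin problem estimation 1/2} with the natural test function $v = w_\epsilon$ itself and close the estimate by absorbing the $w_\epsilon$-dependent pieces on the right-hand side. Combining the ellipticity bound $\int_\Omega A^\epsilon \nabla w_\epsilon \cdot \nabla w_\epsilon \geq \kappa_1 \Norm{\nabla w_\epsilon}_{0,\Omega}^2$ with the assumed lower bound $\alpha \geq \alpha_1$ on the left-hand side, and applying \cref{lem:Robin problem estimation 1/2} with $v=w_\epsilon$ on the right, produces an inequality of the shape
\[
\kappa_1 \Norm{\nabla w_\epsilon}_{0,\Omega}^2 + \alpha_1 \Norm{w_\epsilon}_{0,\Gamma}^2 \leq C \Norm{u_0}_{2,\Omega}\Brackets{\epsilon^{1/2}\Norm{\nabla w_\epsilon}_{0,\Omega_{2\epsilon}}+\epsilon\Norm{\nabla w_\epsilon}_{0,\Omega}+\epsilon\Norm{w_\epsilon}_{0,\Gamma}}.
\]

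Next I would apply Young's inequality $ab \leq \eta a^2 + (4\eta)^{-1} b^2$ to each of the three terms on the right, using the trivial inclusion $\Norm{\nabla w_\epsilon}_{0,\Omega_{2\epsilon}} \leq \Norm{\nabla w_\epsilon}_{0,\Omega}$ for the first term so that its quadratic remainder can be absorbed into $\kappa_1 \Norm{\nabla w_\epsilon}_{0,\Omega}^2$ on the left; likewise the second term is absorbed into $\kappa_1 \Norm{\nabla w_\epsilon}_{0,\Omega}^2$ and the third into $\alpha_1 \Norm{w_\epsilon}_{0,\Gamma}^2$. The leftover source contributions scale like $\epsilon \Norm{u_0}_{2,\Omega}^2$ (from the boundary-strip term, which is the slowest-decaying one) and $\epsilon^2 \Norm{u_0}_{2,\Omega}^2$ (from the other two), so the dominant $\epsilon$ rate survives, giving the target inequality $\int_\Omega \Seminorm{\nabla w_\epsilon}^2 + \int_\Gamma \alpha w_\epsilon^2 \leq C \epsilon \Norm{u_0}_{2,\Omega}^2$ with $C=C(\Omega,\kappa_1,\kappa_2,\alpha_1,\alpha_2)$.

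The two corollaries follow immediately. Discarding the nonnegative boundary term on the left yields $\Norm{\nabla w_\epsilon}_{0,\Omega} \leq C \epsilon^{1/2}\Norm{u_0}_{2,\Omega}$. For the $L^2$ bound on $u_\epsilon-u_0$, I would invoke \cref{lem:coercive for Robin} together with $\alpha \geq \alpha_1$ to obtain $\Norm{w_\epsilon}_{1,\Omega}^2 \leq C\,(\int_\Omega\Seminorm{\nabla w_\epsilon}^2+\int_\Gamma\alpha w_\epsilon^2) \leq C \epsilon \Norm{u_0}_{2,\Omega}^2$, and in particular $\Norm{w_\epsilon}_{0,\Omega} \leq C \epsilon^{1/2}\Norm{u_0}_{2,\Omega}$. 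A triangle inequality then gives $\Norm{u_\epsilon-u_0}_{0,\Omega} \leq \Norm{w_\epsilon}_{0,\Omega} + \Norm{\epsilon N^\epsilon_l \Sop_\epsilon(\partial_l \overbar{u}_0)}_{0,\Omega}$, and the final term is controlled by $C\epsilon\Norm{u_0}_{1,\Omega}$ using \cref{lem:3.2} applied to $f=N_l$.

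No serious obstacle is anticipated: the technical work has already been discharged inside \cref{lem:Robin problem estimation 1/2}, and what remains is a routine Lax--Milgram-style absorption. The one point where care is needed is to choose the Young parameters $\eta \leq \kappa_1/6$ for the two gradient terms and $\eta \leq \alpha_1/2$ for the boundary term so that the constants on the left are genuinely absorbed, and to notice that among the three remainders the $\epsilon^{1/2}\Norm{\nabla w_\epsilon}_{0,\Omega_{2\epsilon}}$ contribution is what fixes the final $O(\epsilon)$ rate.
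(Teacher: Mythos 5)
Your proposal is correct and is exactly the routine absorption argument the paper relies on (no explicit proof is given after \cref{lem:Robin problem estimation 1/2}, precisely because this step is standard): testing the lemma with $v=w_\epsilon$, bounding below by coercivity and $\alpha\geq\alpha_1$, absorbing via Young's inequality, then deducing the two corollaries via \cref{lem:coercive for Robin}, \cref{lem:3.2}, and the triangle inequality. Notably, the direct choice $v=w_\epsilon$ works here (unlike in the general contact case, where the nonlinear $j^0$ terms forced the cut-off construction of $\tilde u_\epsilon$ in the proof of the analogous theorem in \cref{sec:estimation}) because the Robin boundary term is a positive-definite bilinear form that simply lands on the favorable side of the inequality.
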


Next, we will show that $\Norm{u_\epsilon - u_0}_{0,\Omega}$ can reach $O(\epsilon)$. We need a regularity result for Robin boundary problem:

\begin{proposition}
	Suppose that $\Omega$ has $C^{1,1}$ boundary. In addition to uniformly ellipticity, coefficients $A(x)=\SquareBrackets{A_{ij}}_{1\leq i,j \leq d}$ are in $C^{0,1}(\Omega)$, and $\alpha(\bm{x})$ is $C^{0,1}(\Gamma)$ (in the sense of local coordinate). Then $\forall f \in L^2(\Omega)$, $u$ is the solution of Robin problem:
	\begin{equation}
	\left\{
	\begin{aligned}
	-\Div(A \nabla u) = f  \\
	\bm{n}\cdot A \nabla u + \alpha(\bm{x})u = 0
	\end{aligned} .
	\right.
	\end{equation}
	Then $u \in H^2(\Omega)$ with estimation $\Norm{u}_{2,\Omega} \leq C \Norm{f}_{0,\Omega}$, here $C$ depends on $\Omega, \Norm{A_{ij}}_{C^{0,1}(\Omega)}, \Norm{\alpha}_{C^{0,1}(\Gamma)}$ and $\kappa_1, \kappa_2$.	
\end{proposition}

A proof is provided in the appendix. The $L^2$ estimation states as following:
\begin{theorem}
	Suppose that $\Omega$ has $C^{1,1}$ boundary. In addition to the hypotheses in \cref{thm:Robin weakly conv}, $\alpha(\bm{x})$ is uniformly Lipschitz continuous on $\Gamma$. Then we have:
	\[
	\Norm{u_\epsilon - u_0}_{0,\Omega} \leq C \epsilon \Norm{u_0}_{2,\Omega}. 
	\]
	Where $C=C(\Omega, \kappa_1, \kappa_2, \Norm{\alpha}_{C^{0,1}(\Gamma)})$.
\end{theorem}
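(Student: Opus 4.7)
The plan is a duality (Aubin--Nitsche) argument in which the $\epsilon^{1/2}$-type estimate \cref{lem:Robin problem estimation 1/2} is applied twice---once on a primal remainder and once on a dual one---so that two factors of $\epsilon^{1/2}$ combine to produce the sharp $O(\epsilon)$ rate.

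Set $z=u_\epsilon-u_0$ and introduce the Robin dual solutions $\phi_\epsilon,\phi_0\in H^1(\Omega)$ defined by $B_\epsilon(\phi_\epsilon,v)=\int_\Omega z v$ and $B_0(\phi_0,v)=\int_\Omega z v$ for every $v\in H^1(\Omega)$, where $B_\epsilon,B_0$ denote the bilinear forms of \cref{eq:robin variation} and \cref{eq:homogenized robin problem variation form}. The preceding $H^2$-regularity proposition, applied to the homogenized operator (whose coefficients are constants and hence in $C^{0,1}$), yields $\Norm{\phi_0}_{2,\Omega}\leq C\Norm{z}_{0,\Omega}$. Write $\psi^u_\epsilon=\epsilon N^\epsilon_l\Sop_\epsilon(\partial_l\overbar{u}_0)$, $w^u_\epsilon=z-\psi^u_\epsilon$, and analogously $\psi^\phi_\epsilon,w^\phi_\epsilon$, together with $\tilde\phi_\epsilon:=\phi_0+\psi^\phi_\epsilon=\phi_\epsilon-w^\phi_\epsilon$. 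Testing the equation for $\phi_\epsilon$ with $v=z$ and invoking symmetry of $B_\epsilon$ yields the master identity
\[
\Norm{z}_{0,\Omega}^2=B_\epsilon(z,\phi_\epsilon)=B_\epsilon(w^u_\epsilon,\tilde\phi_\epsilon)+B_\epsilon(w^u_\epsilon,w^\phi_\epsilon)+B_\epsilon(\psi^u_\epsilon,w^\phi_\epsilon)+B_\epsilon(\psi^u_\epsilon,\tilde\phi_\epsilon).
\]

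Three of the four pieces are bounded by $O(\epsilon)\Norm{u_0}_{2,\Omega}\Norm{\phi_0}_{2,\Omega}$ via \cref{lem:Robin problem estimation 1/2}. For $B_\epsilon(w^u_\epsilon,\tilde\phi_\epsilon)$ one needs $\Norm{\nabla\tilde\phi_\epsilon}_{0,\Omega_{2\epsilon}}\leq C\epsilon^{1/2}\Norm{\phi_0}_{2,\Omega}$, which follows from a standard boundary-strip estimate for $\nabla\phi_0\in H^1(\Omega)$ combined with \cref{lem:3.4} for the corrector contribution. For $B_\epsilon(w^u_\epsilon,w^\phi_\epsilon)$ the dual version of \cref{thm:Robin weakly conv} supplies $\Norm{\nabla w^\phi_\epsilon}_{0,\Omega}$ and $\Norm{w^\phi_\epsilon}_{0,\Gamma}$ both of size $C\epsilon^{1/2}\Norm{\phi_0}_{2,\Omega}$ (the boundary norm using $\alpha\geq\alpha_1>0$). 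For $B_\epsilon(\psi^u_\epsilon,w^\phi_\epsilon)$ we swap by symmetry and apply \cref{lem:Robin problem estimation 1/2} to the $\phi$-pair with test $\psi^u_\epsilon$, relying on \cref{lem:3.4} for $\Norm{\nabla\psi^u_\epsilon}_{0,\Omega_{2\epsilon}}$ and \cref{lem:boundary estimation} with $t=\epsilon$ for $\Norm{\psi^u_\epsilon}_{0,\Gamma}$, each of size $C\epsilon^{1/2}\Norm{u_0}_{2,\Omega}$.

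The main obstacle is the fourth term, since both $\psi^u_\epsilon$ and $\tilde\phi_\epsilon$ carry gradients of unit order and a direct Cauchy--Schwarz only yields $O(1)$. The trick is to reinsert $\phi_\epsilon=\tilde\phi_\epsilon+w^\phi_\epsilon$ and invoke the defining equation of $\phi_\epsilon$:
\[
B_\epsilon(\psi^u_\epsilon,\tilde\phi_\epsilon)=B_\epsilon(\phi_\epsilon,\psi^u_\epsilon)-B_\epsilon(\psi^u_\epsilon,w^\phi_\epsilon)=\int_\Omega z\,\psi^u_\epsilon-B_\epsilon(\psi^u_\epsilon,w^\phi_\epsilon),
\]
where $\big|\int_\Omega z\,\psi^u_\epsilon\big|\leq\Norm{z}_{0,\Omega}\Norm{\psi^u_\epsilon}_{0,\Omega}\leq C\epsilon\Norm{z}_{0,\Omega}\Norm{u_0}_{1,\Omega}$ thanks to the explicit $\epsilon$ in $\psi^u_\epsilon$, and the second piece has already been estimated. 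Summing the four contributions, using $\Norm{\phi_0}_{2,\Omega}\leq C\Norm{z}_{0,\Omega}$, and dividing by $\Norm{z}_{0,\Omega}$ then yields the theorem.
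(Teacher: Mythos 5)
Your argument is correct and, at its core, is the same duality (Aubin--Nitsche) argument the paper employs: one introduces a dual Robin solution with $H^2$ regularity (via the preceding proposition), applies \cref{lem:Robin problem estimation 1/2} once to the primal remainder and once (after exchanging arguments by symmetry) to the dual remainder, and relies on the boundary-strip bound $\Norm{\nabla \rho}_{0,\Omega_{2\epsilon}}\leq C\epsilon^{1/2}\Norm{\rho}_{2,\Omega}$ together with \cref{lem:3.4} and \cref{lem:boundary estimation} to upgrade each $\epsilon^{1/2}$ to an $\epsilon$.

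The one organizational difference is where the corrector term $\psi^u_\epsilon=\epsilon N^\epsilon_l\Sop_\epsilon(\partial_l\overbar{u}_0)$ is peeled off. The paper first observes that $\Norm{\psi^u_\epsilon}_{0,\Omega}\leq C\epsilon\Norm{u_0}_{2,\Omega}$ trivially, so it suffices to bound $\Norm{w_\epsilon}_{0,\Omega}$; it then pairs $w_\epsilon$ against $\rho_\epsilon=\rho+\psi^\rho_\epsilon+\eta_\epsilon$, getting exactly three terms $J_1,J_2,J_3$, each of which has $w_\epsilon$ as one argument and therefore fits \cref{lem:Robin problem estimation 1/2} directly. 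You instead pair the full difference $z=u_\epsilon-u_0=w^u_\epsilon+\psi^u_\epsilon$ against $\phi_\epsilon=\tilde\phi_\epsilon+w^\phi_\epsilon$, producing four cross terms; the extra one, $B_\epsilon(\psi^u_\epsilon,\tilde\phi_\epsilon)$, is not $O(\epsilon)$ by direct estimate, and you must reinsert $\phi_\epsilon$ and invoke its defining equation to trade it for $\int_\Omega z\,\psi^u_\epsilon$. That reinsertion step is exactly what the paper's preliminary reduction to $\Norm{w_\epsilon}_{0,\Omega}$ renders unnecessary (after the cancellation it reduces to the paper's three-term split plus $\int_\Omega z\,\psi^u_\epsilon$, which is precisely the triangle-inequality step the paper performs first). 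Also, your choice of $z$ itself as the dual right-hand side versus the paper's arbitrary $G\in L^2(\Omega)$ is inessential---the latter is the duality characterization of $\Norm{w_\epsilon}_{0,\Omega}$ and the former is the Aubin--Nitsche specialization; both divide out cleanly. So the proof is sound, but the paper's ordering is leaner.
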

\begin{proof}
	It is sufficient to show $\Norm{w_\epsilon}_{0,\Omega} \leq C\epsilon \Norm{u_0}_{2,\Omega}$, because of the fact that $\Norm{\epsilon N_l \Sop_\epsilon (\partial_l \overbar{u}_0)}_{0,\Omega} \leq C\epsilon \Norm{u_0}_{2,\Omega}$.
	
	We will take duality technique from \cite{Shen2016} for the rest proof. First $\forall G \in L^2(\Omega)$, we have $\rho \in H^2(\Omega)$ which satisfies homogenized Robin problem:
	\[
	\left\{
	\begin{aligned}
	-\Div(\hat{A} \nabla \rho_0) = G  \\
	\bm{n}\cdot \hat{A} \nabla \rho_0 + \alpha(\bm{x})\rho_0 = 0
	\end{aligned} .
	\right.
	\]
	We also let $\rho_\epsilon$ be the solution of original Robin problem:
	\[
	\left\{
	\begin{aligned}
	-\Div(A^\epsilon \nabla \rho_\epsilon) = G  \\
	\bm{n}\cdot A^\epsilon \nabla \rho_\epsilon + \alpha(\bm{x})\rho_\epsilon = 0
	\end{aligned} .
	\right.
	\]
	According to the regularity result, we have $\Norm{\rho}_{2,\Omega} \leq C \Norm{G}_{0,\Omega}$. Take $w_\epsilon$ as test function into previous equation,
	\[
	\int_{\Omega} G w_\epsilon = \int_{\Omega} A^\epsilon \nabla \rho_\epsilon \cdot \nabla w_\epsilon + \int_\Gamma \alpha \rho_\epsilon w_\epsilon .
	\]
	Split $\rho_\epsilon$ into three parts $\rho_\epsilon = \rho + \epsilon N_l \Sop_\epsilon(\partial_l \overbar{\rho}) + \Brackets{\rho_\epsilon - \rho - \epsilon N_l \Sop_\epsilon(\partial_l \overbar{\rho})} \eqqcolon \rho + \psi_\epsilon + \eta_\epsilon$. From \cref{thm:Robin weakly conv}, we will have $\Norm{\nabla \eta_\epsilon}_{0,\Omega} \leq \epsilon^{1/2} \Norm{\rho}_{2,\Omega}$ and $\Norm{\eta_\epsilon}_{0,\Gamma} \leq \epsilon^{1/2} \Norm{\rho}_{2,\Omega}$. Then,
	\[
	\begin{aligned}
	\int_{\Omega} G w_\epsilon =& \int_\Omega A^\epsilon \nabla \rho \cdot \nabla w_\epsilon + \int_\Gamma \alpha \rho w_\epsilon \\
	&+ \int_\Omega A^\epsilon \nabla \psi_\epsilon \cdot \nabla w_\epsilon + \int_\Gamma \alpha \psi_\epsilon w_\epsilon \\
	&+ \int_\Omega A^\epsilon \nabla \eta_\epsilon \cdot \nabla w_\epsilon + \int_\Gamma \alpha \eta_\epsilon w_\epsilon \\
	\coloneqq & J_1+J_2+J_3 .
	\end{aligned}
	\]
	For $J_3$, we have $\Seminorm{J_3} \leq C\epsilon \Norm{\rho}_{2,\Omega}\Norm{u_0}_{2,\Omega}$. Use \cref{lem:Robin problem estimation 1/2}, we obtain:
	\[
	\begin{aligned}
		\Seminorm{J_1} &\leq C\Norm{u_0}_{2,\Omega} \Brackets{\epsilon^{1/2}\Norm{\nabla \rho}_{0, \Omega_{2\epsilon}} + \epsilon \Norm{\nabla \rho}_{0,\Omega} + \epsilon \Norm{\rho}_{0,\Gamma}} \\
		&\leq C\Norm{u_0}_{2,\Omega} \Brackets{\epsilon\Norm{\rho}_{2, \Omega} + \epsilon \Norm{\nabla \rho}_{0,\Omega} + \epsilon \Norm{\rho}_{0,\Gamma}} \leq C\epsilon \Norm{\rho}_{2,\Omega} \Norm{u_0}_{2,\Omega}
	\end{aligned}.
	\]
	Similarly, $\Seminorm{J_2} \leq C\epsilon \Norm{\rho}_{2,\Omega} \Norm{u_0}_{2,\Omega}$. Together, we have 
	\[
	\int_\Omega G w_\epsilon \leq C \epsilon \Norm{\rho}_{2,\Omega} \Norm{u_0}_{2,\Omega} \leq C \epsilon \Norm{G}_{0,\Omega} \Norm{u_0}_{2,\Omega}
	\]. 	 
\end{proof}

\section{Computational Method}
\label{sec:computation}
After the completing of $O(\epsilon^{1/2})$ estimation, \cref{eq:contact problem hemiform} will be computable because  $u_0+\epsilon N^\epsilon_l\Sop_\epsilon(\partial_l \overbar{u}_0)$ can approximate well to original high oscillating $u_\epsilon$. However, obtain $\overbar{u}_0$ and perform smoothing action $\Sop_\epsilon$ is impractical in real computation. Instead, we should calculate $\partial_i u_0 + (\partial_i N_l)^\epsilon \partial_l u_0$ as an approximation for $\partial_i u_\epsilon$. Here is a lemma for the error analysis.
\begin{lemma} \label{lem:gradient error analysis}
	Let $u_\epsilon$ and $u_0$ be the solution of \cref{eq:contact problem hemiform} and \cref{eq:contact problem homohemiform} respectively, and assumptions \cref{ass:A}-\cref{ass:C} be satisfied, and assume $u_0 \in H^2(\Omega), N_l \in W^{1,\infty}_\sharp(Q)$. Then:
	\[
	\sum_i \int_{\Omega}\Seminorm{\partial_i u_\epsilon-\partial_i u_0 -(\partial_i N_l)^\epsilon \partial_l u_0}^2 \leq \epsilon C(\Omega, \kappa_1, \kappa_2, \Delta, \Norm{N_l}_{W^{1,\infty}(Q)})  \Norm{u_0}_{2,\Omega}^2 .
	\] 
\end{lemma}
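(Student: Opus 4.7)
The plan is to reduce this lemma to the already-proved $H^1$ estimate for $w_\epsilon = u_\epsilon - u_0 - \epsilon N^\epsilon_l \Sop_\epsilon(\partial_l \overbar{u}_0)$, using the regularity $N_l \in W^{1,\infty}_\sharp(Q)$ to absorb the discrepancies introduced by the smoothing operator and by the $\epsilon N^\epsilon_l$ correction.

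First, I would compute $\partial_i w_\epsilon$ directly by the product rule:
\[
\partial_i w_\epsilon = \partial_i u_\epsilon - \partial_i u_0 - (\partial_i N_l)^\epsilon \Sop_\epsilon(\partial_l \overbar{u}_0) - \epsilon N^\epsilon_l \Sop_\epsilon(\partial^2_{il} \overbar{u}_0),
\]
and then rearrange to obtain the algebraic identity
\[
\partial_i u_\epsilon - \partial_i u_0 - (\partial_i N_l)^\epsilon \partial_l u_0 = \partial_i w_\epsilon + (\partial_i N_l)^\epsilon\bigl[\Sop_\epsilon(\partial_l \overbar{u}_0) - \partial_l u_0\bigr] + \epsilon N^\epsilon_l \Sop_\epsilon(\partial^2_{il} \overbar{u}_0).
\]
Since $\partial_l u_0 = \partial_l \overbar{u}_0$ on $\Omega$, the middle bracket can be rewritten as the smoothing defect $\Sop_\epsilon(\partial_l \overbar{u}_0) - \partial_l \overbar{u}_0$ on $\Omega$, which is the form where \cref{lem:3.1} applies.

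Next, I would estimate the three pieces separately in $L^2(\Omega)$. The $\partial_i w_\epsilon$ term is already $O(\epsilon^{1/2})\Norm{u_0}_{2,\Omega}$ by the theorem immediately preceding this lemma. For the second piece, the hypothesis $N_l \in W^{1,\infty}_\sharp(Q)$ gives $\Norm{(\partial_i N_l)^\epsilon}_{L^\infty(\R^d)} \leq \Norm{N_l}_{W^{1,\infty}(Q)}$ by periodicity, so I can pull this factor out in $L^\infty$ and invoke the smoothing defect bound from \cref{lem:3.1},
\[
\Norm{\Sop_\epsilon(\partial_l \overbar{u}_0) - \partial_l \overbar{u}_0}_{0,\R^d} \leq C\epsilon \Norm{\nabla \partial_l \overbar{u}_0}_{0,\R^d} \leq C\epsilon \Norm{u_0}_{2,\Omega},
\]
using boundedness of the extension operator $\mathbf{E}$. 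For the third piece, the $L^\infty$ bound on $N^\epsilon_l$ together with $\Norm{\Sop_\epsilon \cdot}_{0,\R^d} \leq \Norm{\cdot}_{0,\R^d}$ from \cref{lem:3.1} gives $\epsilon \Norm{N^\epsilon_l \Sop_\epsilon(\partial^2_{il}\overbar{u}_0)}_{0,\Omega} \leq C\epsilon \Norm{u_0}_{2,\Omega}$, which is actually higher order than needed.

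Combining the three bounds by the triangle inequality yields
\[
\Norm{\partial_i u_\epsilon - \partial_i u_0 - (\partial_i N_l)^\epsilon \partial_l u_0}_{0,\Omega} \leq C \epsilon^{1/2} \Norm{u_0}_{2,\Omega},
\]
and squaring and summing over $i$ gives the stated conclusion. There is no real obstacle here beyond bookkeeping: the delicate $\epsilon^{1/2}$ gradient estimate was the hard work of Section \ref{sec:estimation}, and the additional $W^{1,\infty}$ assumption on $N_l$ (which is stronger than the $H^1_\sharp(Q)$ regularity from \cref{eq:correctors}) is precisely what is needed to replace the smoothed gradient $\Sop_\epsilon(\partial_l \overbar{u}_0)$ by the raw $\partial_l u_0$ without losing a factor of $\epsilon^{1/2}$; the only point worth a brief remark is that this stronger regularity is a Lipschitz-type hypothesis on the corrector that holds, e.g., under mild smoothness of the coefficients $A(\bm{y})$.
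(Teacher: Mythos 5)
Your proposal is correct and follows essentially the same path as the paper: decompose the error into $\partial_i w_\epsilon$ plus the smoothing defect term $(\partial_i N_l)^\epsilon[\Sop_\epsilon(\partial_l \overbar{u}_0)-\partial_l u_0]$ plus the $\epsilon N^\epsilon_l \Sop_\epsilon(\partial^2_{il}\overbar{u}_0)$ remainder, bound the first by the preceding $O(\epsilon^{1/2})$ theorem, and bound the other two at order $\epsilon$ using the $W^{1,\infty}$ bound on $N_l$ together with Lemma \ref{lem:3.1}. The only cosmetic difference is that the paper invokes Lemma \ref{lem:3.2} (for periodic $L^2_{\text{loc}}$ functions) where you use $\Norm{N_l}_{L^\infty}$ directly for the third piece; both yield the same bound.
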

\begin{proof}
	Directly calculate the error $\partial_i w_\epsilon = \partial_i u_\epsilon - \partial_i u_0 - (\partial_i N_l)^\epsilon \Sop_\epsilon(\partial_l \overbar{u}_0) - \epsilon N_l^\epsilon \Sop_\epsilon(\partial^2_{il}\overbar{u}_0)$, \cref{lem:3.2} tells us $\Norm{N_l^\epsilon \Sop_\epsilon(\partial^2_{il}\overbar{u}_0)}_{0,\Omega} \leq C \Norm{u_0}_{2, \Omega}$. By H\"{o}lder inequality, we have
	\[
	\begin{aligned}
	\int_{\Omega} \Seminorm{(\partial_i N_l)^\epsilon}^2 \Seminorm{\Sop_\epsilon (\partial_l \overbar{u}_0)-\partial_l u_0}^2 \leq& C \sum_l \int_{\R^d} \Seminorm{\Sop_\epsilon (\partial_l \overbar{u}_0)-\partial_l \overbar{u}_0}^2 \leq C\epsilon^2 \int_{\R^d} \Seminorm{\nabla^2 \overbar{u}_0}^2 \\
	\leq& C \epsilon^2 \Norm{u_0}_{2,\Omega}^2
	\end{aligned}.
	\]
	Then the conclusion holds because $\Norm{\nabla w_\epsilon}_{0,\Omega}$ dominates the error:
	\[
	\begin{aligned}
	&\sum_i \int_{\Omega} \Seminorm{\partial_i u_\epsilon-\partial_i u_0 -(\partial_i N_l)^\epsilon \partial_l u_0}^2 \\
	\leq & \int_{\Omega} \Seminorm{\nabla w_\epsilon}^2\\
	&+\sum_i\epsilon^2 \int_{\Omega} \Seminorm{N_l^\epsilon \Sop_\epsilon(\partial^2_{il}\overbar{u}_0)}^2 + \sum_i \int_{\Omega} \Seminorm{(\partial_i N_l)^\epsilon}^2 \Seminorm{\Sop_\epsilon (\partial_l \overbar{u}_0)-\partial_l u_0}^2
	\end{aligned}.	
	\]	
\end{proof}
\begin{remark}
	It seems that we can not weaken the regularity assumption for $N_l(\bm{y})$ because we can not prove a strengthened version of  \cref{lem:3.2}, that is:
	\begin{quotation}
		Let $f \in L_{\text{loc}}^2(\R^d)$ be a 1-periodic function, Then for any $u \in H^1(\R^d)$, 
		\[
		\Norm{f^\epsilon \Brackets{\Sop_\epsilon u - u}}_{0,\R^d} \leq C \epsilon \Norm{f}_{0, Q} \Norm{\nabla u}_{0,\R^d}.
		\]
	\end{quotation}
	We also mention that when the coefficients $A(\bm{y})$ is piecewise smooth, which is a suitable assumption in application, and the $W^{1,\infty}$ proposition can be verified by the works in \cite{Li2003}.
\end{remark}

We can implement finite element method (FEM) to obtain the numerical solution of $u_0$. Let $V_h$ be the finite element space (see \cite{Brenner2008}), then
\begin{equation}\label{eq:contact problem homofem}
\left\{
\begin{aligned}
&\text{Find } u_{0,h} \in V_h, \text{ s.t. } \forall v_h \in V \\
& \int_\Omega \hat{A} \nabla u_{0,h} \cdot \nabla v_h + j^0(\gamma_j u_{0,h}; \gamma_j v_h) \geq \FuncAction{\tilde{f}}{v_h}
\end{aligned} .
\right.
\end{equation}
The existence and uniqueness of this problem were also shown by utilizing the framework in \cite{Han2017}. Then the computational method is direct:
\begin{algorithm}[H]
	\caption{Computation framework for contact problem in small periodic setting}
	\begin{algorithmic}[1]
		\State Solve the equations on correctors \cref{eq:correctors} and obtain numerical solution $N_l^\star(\bm{y})$. Calculate numerical homogenized coefficients $\hat{A}^\star$. In this step, the cost of computation is independent with original problem, thus we can implement high accuracy method.
		
		\State Choose grid size $h$ to mesh the domain $\Omega$. Solve \cref{eq:contact problem homofem} and obtain $u_{0,h}$. 
		
		\State Construct numerical gradient value $(\partial_i u_\epsilon)^\star$ by 
		\[
		(\partial_i u_\epsilon)^\star(\bm{x}) \coloneqq \partial_i u_{0,h}(\bm{x}) + \partial_i N_l^\star(\bm{x}/\epsilon) \partial_l u_{0,h}(\bm{x}) .
		\]
	\end{algorithmic}
\end{algorithm}
Rewrite the error analysis \cref{lem:gradient error analysis} in FEM framework, we derive following numerical error expression:
\[
\Norm{\nabla u_\epsilon-(\nabla u_\epsilon)^\star}_{0,\Omega} \leq C\Brackets{\epsilon^{1/2} \Norm{u_0}_{2,\Omega}+\Seminorm{u_0-u_{0,h}}_{1,\Omega}},
\]
here $(\nabla u_\epsilon)^\star=\SquareBrackets{(\partial_i u_\epsilon)^\star}_{1\leq i \leq d}=\SquareBrackets{\partial_i u_{0,h}(\bm{x}) + \partial_i N_l^\star(\bm{x}/\epsilon) \partial_l u_{0,h}(\bm{x})}_{1\leq i \leq d}$. The estimation for $\Seminorm{u_0-u_{0,h}}_{1,\Omega}$ merely involves the theory of FEM. Fortunately, a C\'{e}a's inequality has been proved in \cite{Han2017} sect. 4.2, and combine their works, we have following theorem:
\begin{theorem}
	Assume $u_0(\bm{x}) \in H^2(\Omega)$ and $N_l(\bm{y}) \in W^{1,\infty}_\sharp(Q)$. Neglect the error brought by calculating numerical correctors $N^\star_l(\bm{y})$ and homogenized coefficients $\hat{A}^\star$. Use Lagrange FEM to Solve \cref{eq:contact problem homofem} with $h$ grid size. Recall $\gamma_ju_0 \in H^{3/2}(\Gamma_C)$ and $\Norm{\gamma_ju_0}_{3/2,\Gamma_C} \leq C \Norm{u_0}_{2,\Omega}$. Then error between numerical gradient $(\nabla u_\epsilon)^\star$ and $\nabla u_\epsilon$ satisfies the relation:
	\[
	\Norm{\nabla u_\epsilon-(\nabla u_\epsilon)^\star}_{0,\Omega} \leq C\SquareBrackets{(\epsilon^{1/2}+h) \Norm{u_0}_{2,\Omega}+h^{3/4} \sqrt{\Norm{u_0}_{2,\Omega}}} .
	\]
	If $\gamma_ju_0 \in H^2(\Gamma_C)$, there exists optimal numerical error order:
	\[
	\Norm{\nabla u_\epsilon-(\nabla u_\epsilon)^\star}_{0,\Omega} \leq C (h+\epsilon^{1/2}) \Brackets{\Norm{u_0}_{2,\Omega}+ \sqrt{\Norm{\gamma_ju_0}_{2, \Gamma_C}}},
	\]
	here the constant $C$ depends on $\Omega, \kappa_1, \kappa_2, \Delta, \Norm{N_l}_{W^{1,\infty}(Q)}$ and quality of the mesh (see \cite{Brenner2008} for detail description).
\end{theorem}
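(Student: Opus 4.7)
The plan is to split the total error into a homogenization part and a FEM part via the triangle inequality, then handle each with an already-quoted tool. Define the "exact" first-order gradient approximation $(\partial_i u_\epsilon)^{\text{hom}} \coloneqq \partial_i u_0 + (\partial_i N_l)^\epsilon \partial_l u_0$, so that
\[
\Norm{\nabla u_\epsilon - (\nabla u_\epsilon)^\star}_{0,\Omega} \leq \Norm{\nabla u_\epsilon - (\nabla u_\epsilon)^{\text{hom}}}_{0,\Omega} + \Norm{(\nabla u_\epsilon)^{\text{hom}} - (\nabla u_\epsilon)^\star}_{0,\Omega}.
\]
The first term is exactly what \cref{lem:gradient error analysis} controls: it is bounded by $C \epsilon^{1/2} \Norm{u_0}_{2,\Omega}$, where $C$ absorbs $\Omega, \kappa_1, \kappa_2, \Delta$ and $\Norm{N_l}_{W^{1,\infty}(Q)}$.

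For the second term, componentwise expansion gives $(\partial_i u_\epsilon)^{\text{hom}} - (\partial_i u_\epsilon)^\star = \partial_i(u_0 - u_{0,h}) + (\partial_i N_l)^\epsilon \partial_l(u_0 - u_{0,h})$. Since $N_l \in W^{1,\infty}_\sharp(Q)$, the factor $(\partial_i N_l)^\epsilon$ is uniformly bounded in $L^\infty(\Omega)$, hence
\[
\Norm{(\nabla u_\epsilon)^{\text{hom}} - (\nabla u_\epsilon)^\star}_{0,\Omega} \leq C(1 + \Norm{N_l}_{W^{1,\infty}(Q)}) \Seminorm{u_0 - u_{0,h}}_{1,\Omega}.
\]
So everything reduces to a pure FEM estimate for the homogenized hemivariational inequality \cref{eq:contact problem homofem}.

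For $\Seminorm{u_0 - u_{0,h}}_{1,\Omega}$ I would invoke the C\'{e}a-type inequality from \cite{Han2017} Sect.~4.2. In that abstract framework the best-approximation error on $V$ is controlled by $\inf_{v_h \in V_h}\{\Norm{u_0 - v_h}_V^2 + \Norm{\gamma_j u_0 - \gamma_j v_h}_{V_j}\}$; choosing $v_h$ as the Lagrange interpolant of $u_0$ yields the interior piece $\Norm{u_0 - v_h}_V \leq C h \Norm{u_0}_{2,\Omega}$, while the boundary piece uses the fractional trace theory and Lagrange interpolation on $\Gamma_C$ to give $\Norm{\gamma_j u_0 - \gamma_j v_h}_{L^2(\Gamma_C)} \leq C h^{3/2} \Norm{\gamma_j u_0}_{3/2,\Gamma_C}$ when $\gamma_j u_0 \in H^{3/2}(\Gamma_C)$, and $\leq C h^{2} \Norm{\gamma_j u_0}_{2,\Gamma_C}$ under the stronger regularity. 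Taking square roots and using the stated trace estimate $\Norm{\gamma_j u_0}_{3/2,\Gamma_C} \leq C \Norm{u_0}_{2,\Omega}$ gives respectively
\[
\Seminorm{u_0 - u_{0,h}}_{1,\Omega} \leq C \bigl( h \Norm{u_0}_{2,\Omega} + h^{3/4} \sqrt{\Norm{u_0}_{2,\Omega}} \bigr), \qquad \Seminorm{u_0 - u_{0,h}}_{1,\Omega} \leq C h \bigl( \Norm{u_0}_{2,\Omega} + \sqrt{\Norm{\gamma_j u_0}_{2,\Gamma_C}} \bigr).
\]

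Summing the homogenization piece $C\epsilon^{1/2}\Norm{u_0}_{2,\Omega}$ and the FEM piece produces the two claimed bounds directly. The main obstacle, in my view, is not this combination, which is purely mechanical, but pinning down the exact form of the abstract C\'{e}a inequality: the $h^{3/4}$ factor (rather than $h^{3/2}$) comes from the fact that the boundary mismatch enters the Han--Sofonea--type inequality only to the first power, so after square-rooting one loses half an order. Once that square-root step is correctly handled and the hidden $\sqrt{\Norm{\gamma_j u_0}_{3/2,\Gamma_C}}$ is controlled by $\sqrt{\Norm{u_0}_{2,\Omega}}$, the two estimates fall out, with the constant $C$ depending exactly on the listed quantities $\Omega, \kappa_1, \kappa_2, \Delta, \Norm{N_l}_{W^{1,\infty}(Q)}$ and the regularity of the mesh.
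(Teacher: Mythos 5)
Your proposal is correct and follows essentially the same route as the paper: decompose the error into a homogenization piece controlled by \cref{lem:gradient error analysis} and a pure FEM piece $\Seminorm{u_0-u_{0,h}}_{1,\Omega}$ (with the $(\partial_i N_l)^\epsilon$ multiplier absorbed via the $W^{1,\infty}$ bound), then quantify the latter by the C\'{e}a inequality of \cite{Han2017}, whose square-rooted boundary-trace term produces the $h^{3/4}$ (resp.\ $h$) factor under $H^{3/2}(\Gamma_C)$ (resp.\ $H^2(\Gamma_C)$) regularity. The paper does not expand these steps beyond exactly the intermediate estimate $\Norm{\nabla u_\epsilon-(\nabla u_\epsilon)^\star}_{0,\Omega}\leq C(\epsilon^{1/2}\Norm{u_0}_{2,\Omega}+\Seminorm{u_0-u_{0,h}}_{1,\Omega})$ followed by the citation to \cite{Han2017}, so your write-up is, if anything, more explicit.
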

\begin{remark}
	For Robin problem, the conclusion is more elegant:
	
	Assume $u_0(\bm{x}) \in H^2(\Omega)$ and $N_l(\bm{y}) \in W^{1,\infty}_\sharp(Q)$. Neglect the error brought by calculating numerical correctors $N^\star_l(\bm{y})$ and homogenized coefficients $\hat{A}^\star$. Use Lagrange FEM to Solve \cref{eq:homogenized robin problem variation form} with $h$ grid size. The numerical error is:
		\[
		\Norm{\nabla u_\epsilon-(\nabla u_\epsilon)^\star}_{0,\Omega} \leq C (h+\epsilon^{1/2}) \Norm{u_0}_{2,\Omega},
		\]	
	here the constant $C$ depends on $\Omega, \kappa_1, \kappa_2, \alpha_1, \alpha_2, \Norm{N_l}_{W^{1,\infty}(Q)}$ and quality of the mesh.
\end{remark}

\section{Numerical Experiments}\label{sec:experiments}

Generally, when to solve contact problems described by hemivariational inequalities (for examples, \cref{eq:contact problem homohemiform} and \cref{eq:contact problem hemiform}), we need to convert the original problems to an optimization problem and use optimizaiton algorithm. Inspired by \cite{Barboteu2013}, we discover an iterative method to solve simplified problem \cref{eq:simplified contact problem}, and it performs effectively in our numerical experiment.

\begin{algorithm}[H]
	\caption{Iterative method for solving simplified contact problem \cref{eq:simplified contact problem}}
	\begin{algorithmic}[1]
		\State Mesh the domain $\Omega$ and construct FEM space $V_h$. Then set tolerance $\mathit{tol}$ and initial solutions $u^{(0)}_h$.
		\State Solve a variational problem: \label{al:1}
		\[
		\left\{
		\begin{aligned}
			&\text{Find } u^{(n+1)}_h \in V, \text{ s.t. } \forall v_h \in V_h \\
			&\int_\Omega A \nabla u^{(n+1)}_h \cdot \nabla v_h = \FuncAction{f}{v_h}+\int_{\Gamma_N} gv_h - \int_{\Gamma_C} \mathrm{B}\Brackets{u^{(n)}_h} v_h 
		\end{aligned}	
		\right. ,
		\]
		which is equivalent to solve a linear system:
		\[
		A_h u^{(n+1)} = b_h + r_h\Brackets{u^{(n)}} .
		\]
		\State Loop until {$\Norm{u^{(n+1)}_h - u^{(n)}} < \mathit{tol} \Norm{u^{(n)}}$}.
	\end{algorithmic}
\end{algorithm}

Actually, One can prove if \cref{pp:simplified contact problem} holds, the algorithm above will converge linearly to the real solution. The technique used here is to elucidate nonlinear map $u^{(n)} \mapsto u^{(n+1)}$ is contractive, and we omit the details here.

Now, we can set up our experiment problem:

Take $\Omega$ as square $(0,1)\times(0,1)$, and partition $\Omega$ into $N\times N$ whole cells. Hence, in this case $\epsilon = 1/N$. We set $A^\epsilon(\bm{x})=\kappa^\epsilon(\bm{x}) I$, $I$ is identity matrix, and $\kappa^\epsilon(\bm{x})$ can merely take two values respectively in different subdomains on each cell. Here a figure to illustrate those relations:
\begin{figure}[htbp]
	\centering
	\begin{tikzpicture}
		\draw[->] (-2.0, -1) -- (-1, -1);
		\draw[->] (-2.0, -1) -- (-2.0, 0);
		\node at (-0.7, -1) {$x$};
		\node at (-2.0, 0.3) {$y$};
		\draw (0, 0) rectangle (4, 4);
		\draw (0, 0) -- (0, -0.3) (2, 0) -- (2, -0.3) (4, 0) -- (4, -0.3);
		\draw[<-] (0, -0.15) -- (0.5, -0.15);
		\draw[<-] (2, -0.15) -- (2.5, -0.15);
		\draw[->] (1.5, -0.15) -- (2.0, -0.15);
		\draw[->] (3.5, -0.15) -- (4.0, -0.15);
		\draw (0, 0) -- (-0.3, 0) (0, 4) -- (-0.3, 4);
		\draw[<-] (-0.15, 4) -- (-0.15, 2.5);
		\draw[->] (-0.15, 1.5) -- (-0.15, 0);		
		\draw (4, 0) -- (4.3, 0) (4, 4) -- (4.3, 4);
		\draw[<-] (4.15, 4) -- (4.15, 2.5);
		\draw[->] (4.15, 1.5) -- (4.15, 0);
		\draw (0, 4) -- (0, 4.3) (4, 4) -- (4, 4.3);
		\draw[<-] (0, 4.15) -- (1.5, 4.15);
		\draw[->] (2.5, 4.15) -- (4, 4.15);
		\node at (1.0, -0.25) {$\Gamma_{C'}$};
		\node at (3.0, -0.25) {$\Gamma_{C''}$};
		\node at (-0.25, 2) {$\Gamma_N$};
		\node at (4.3, 2) {$\Gamma_N$};
		\node at (2.0, 4.25) {$\Gamma_D$};

		\foreach \m in {1,...,4}
			\foreach \n in {1,...,4}
			{
				\fill[mycolor1] (\m-1, \n-1) rectangle (\m, \n);
				\fill[mycolor2] (\m-0.75, \n-0.75) rectangle (\m-0.25, \n-0.25);
			}
		\foreach \i in {1,...,3}
		{
			\draw[dotted] (0, \i) -- (4, \i);
			\draw[dotted] (\i, 0) -- (\i, 4);
		}
	
	\fill[mycolor1] (7,0) rectangle (11, 4);
	\fill[mycolor2] (8,1) rectangle (10, 3);
	\draw[dotted] (7, -0.3) -- (7, 4.3) (6.7, 4) -- (11.3, 4) (11, 4.3) -- (11, -0.3) (11.3, 0.0) -- (6.7, 0);
	\draw[<-] (7, 2) -- (7.25, 2);
	\draw[->] (7.75, 2) -- (8, 2);
	\node at (7.5, 2) {$\rho$};
	\draw[<-] (9, 0) -- (9, 0.25);
	\draw[<-] (9, 1) -- (9, 0.75);
	\node at (9, 0.5) {$\rho$};
	\node at (9, 2) {$\kappa(\bm{y}) \equiv \kappa_2$};
	\node at (9, 3.5) {$\kappa(\bm{y}) \equiv \kappa_1$};
	
	\draw[mycolor3] (4.3, 2.6) -- (6.6, 3.6);
	\draw[mycolor3] (4.3, 1.4) -- (6.6, 0.4);
	\node[rounded corners=3pt, draw, fill=mycolor4] at (5.5, 2.0) {$\bm{y}=\bm{x}/\epsilon$};
	
	\node[rounded corners=3pt, draw, fill=mycolor4] at (2.0, 2.0) {$\Omega$};
	\end{tikzpicture}
	\caption{In this figure, $\Omega$ is composed by $4\times 4$ whole cells, a parameter $\rho$ indicates the geometric relation between the two subdomains of cell, here we set $\rho = 0.25$.}
\end{figure}
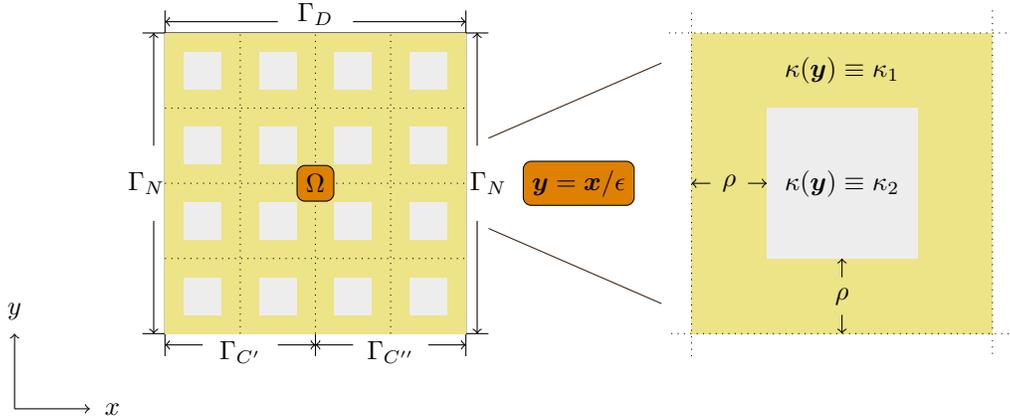

Boundary $\Gamma$ is divided into four parts $\Gamma_D, \Gamma_N, \Gamma_{C'}, \Gamma_{C''}$, and the boundary conditions are expressed in following variational problem:
\begin{equation}\label{eq:numerical experiment}
\left\{
\begin{aligned}
&\text{Find } u^\epsilon \in V= \{ v \in H^1(\Omega): v \equiv 0 \text{ on } \Gamma_D\}, \text{ s.t. } \forall v \in V \\
& \int_\Omega \kappa^\epsilon(\bm{x}) \nabla u^\epsilon \cdot \nabla v + \alpha \int_{\Gamma_{C'}} u^\epsilon v + \alpha \int_{\Gamma_{C''}} \Brackets{u^\epsilon}^+v = g \int_{\Gamma_N} v + f \int_{\Omega} v  
\end{aligned}
\right. .
\end{equation}
Here for simplicity, we set $\alpha, g, f$ as constants. To guarantee that this problem is solvable, we give following proposition:
\begin{proposition}
	If $\kappa_1 > \Seminorm{\alpha}$, then the solution of \cref{eq:numerical experiment} exists.
\end{proposition}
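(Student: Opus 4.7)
The plan is to cast \cref{eq:numerical experiment} into the hemivariational framework of \cref{sec:preliminaries} and verify assumptions \cref{ass:A}--\cref{ass:C}, so that existence (and in fact uniqueness) follows from the general solvability theorem quoted from \cite{Han2017}. Assumption \cref{ass:A} is immediate since the coefficient matrix $\kappa^\epsilon I$ is symmetric and uniformly elliptic with constants $\kappa_1,\kappa_2$, so the real work lies in identifying the potential $j$ on $V_j=L^2(\Gamma_C)$ and checking the structural conditions on it.

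For the potential I would take
\[
j(\phi)=\frac{\alpha}{2}\int_{\Gamma_{C'}}\phi^2+\frac{\alpha}{2}\int_{\Gamma_{C''}}(\phi^+)^2.
\]
The key observation is that although $x\mapsto x^+$ has a kink at $0$, $x\mapsto (x^+)^2/2$ is $C^1$ with derivative $x^+$; hence $j$ is $C^1$ on $V_j$ and its Clarke subdifferential collapses to the classical G\^ateaux derivative $\partial j(\phi)=\alpha\phi$ on $\Gamma_{C'}$ and $\alpha\phi^+$ on $\Gamma_{C''}$. These are exactly the boundary terms appearing in \cref{eq:numerical experiment}, so \cref{eq:numerical experiment} is indeed the variational equality associated to the generalized gradient of this $j$.

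Verifying \cref{ass:B1} is trivial with $c_0=0$ and $c_1=|\alpha|$, since both $|\alpha\phi|$ and $|\alpha\phi^+|$ are bounded pointwise by $|\alpha||\phi|$. For \cref{ass:B2}, smoothness of $j$ gives
\[
j^0(\phi_1;\phi_2-\phi_1)+j^0(\phi_2;\phi_1-\phi_2)=-\alpha\int_{\Gamma_{C'}}(\phi_1-\phi_2)^2-\alpha\int_{\Gamma_{C''}}(\phi_1^+-\phi_2^+)(\phi_1-\phi_2),
\]
and combining $\alpha\ge -|\alpha|$ with the elementary inequality $0\le (\phi_1^+-\phi_2^+)(\phi_1-\phi_2)\le (\phi_1-\phi_2)^2$ bounds the right-hand side by $|\alpha|\Norm{\phi_1-\phi_2}^2_{V_j}$; hence $\alpha_j=|\alpha|$. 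Assumption \cref{ass:C} then reduces to $\kappa_1-|\alpha|c_j^2>0$, which is the precise smallness condition underlying the hypothesis of the proposition (with the trace-norm factor $c_j^2$ understood as part of the stated hypothesis $\kappa_1>|\alpha|$).

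The only delicate point is that $\phi\mapsto\phi^+$ fails to be classically differentiable at $0$, which is exactly why the Clarke subdifferential must be used; the observation that squaring absorbs the kink into a $C^1$ function dispatches this obstacle and reduces \cref{ass:B2} to the one-line monotonicity inequality above. An equivalent shortcut is to bypass the hemivariational framework entirely and apply the strongly monotone operator theorem of \cite{Zeidler1995} to the nonlinear operator $A:V\to V^*$ defined by the left-hand side of \cref{eq:numerical experiment}; the same computation yields $\FuncAction{Au_1-Au_2}{u_1-u_2}\ge(\kappa_1-|\alpha|c_j^2)\Norm{u_1-u_2}_V^2$, while Lipschitz continuity of both boundary perturbations is immediate, so the hypotheses of Zeidler's theorem hold and the conclusion follows.
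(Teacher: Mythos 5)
Your reduction to the hemivariational framework and the verification of assumptions \cref{ass:B1} and \cref{ass:B2} with $c_0=0$, $c_1=\alpha_j=\Seminorm{\alpha}$ are correct, and this preliminary work is indeed what the paper tacitly takes for granted (it was essentially done when the paper introduced the Robin-type $j$ in \cref{sec:preliminaries}). However, the place where you wave your hands is exactly the place where the paper's proof does its only work: passing from the structural requirement $\kappa_1-\Seminorm{\alpha}c_j^2>0$ to the proposition's stated hypothesis $\kappa_1>\Seminorm{\alpha}$. Your parenthetical remark that the factor $c_j^2$ is ``understood as part of the stated hypothesis'' simply re-states \cref{ass:C} rather than deriving it from what the proposition actually assumes; as written, this is a gap. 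The factor $c_j=\Norm{\gamma_j}_{V\to V_j}$ is a geometry-dependent trace constant, and nothing in your argument shows $c_j\le 1$, which is what is needed for $\kappa_1>\Seminorm{\alpha}$ to imply $\kappa_1>\Seminorm{\alpha}c_j^2$. (On a long thin strip, for example, this trace constant can be large, so the inequality $c_j\le1$ is not automatic.)

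The paper's proof is precisely the verification that $c_j\le1$ in the experiment's concrete geometry: $\Omega=(0,1)^2$ with $\Gamma_D=\{y=1\}$ and $\Gamma_C=\{y=0\}$. Since $u(x,1)\equiv0$ for $u\in V$, one writes $u(x,0)=-\int_0^1\partial_y u(x,y)\,\dx y$ and applies Cauchy--Schwarz in $y$ to obtain
\[
\int_0^1\Seminorm{u(x,0)}^2\,\dx x\le\int_0^1\int_0^1\Seminorm{\partial_y u(x,y)}^2\,\dx y\,\dx x\le\Norm{\nabla u}_{0,\Omega}^2=\Norm{u}_V^2 ,
\]
giving $c_j\le1$ and hence $\Delta=\kappa_1-\alpha_j c_j^2\ge\kappa_1-\Seminorm{\alpha}>0$. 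You should supply this (or an equivalent) estimate; note that your proposed shortcut via Zeidler's strongly monotone operator theorem inherits the same $c_j^2$ factor in the monotonicity constant and therefore also needs $c_j\le1$ to conclude under the stated hypothesis.
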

\begin{proof}
	According to \cref{ass:C}, we are left to show $c_j \leq 1$. Use the fact that $u(x, 1)\equiv 0$, we have
	\[
	\int_0^1 \Seminorm{u(x, 0)}^2 \dx x = \int_0^1 \Seminorm{\int_{0}^{1} \partial_y u(x,y) \dx y}^2 \dx x \leq \int_{0}^{1} \int_{0}^{1} \Seminorm{\partial_y u(x, y)}^2 \dx x \dx y ~ ,
	\]
	and this gives $c_j \leq 1$.
\end{proof}

We slice each cell equally into $M \times M$ elements, therefore, we actually solve the original problem \ref{eq:numerical experiment} and its homogenized version on a $NM\times NM$ grid. Our numerical experiment focus on verifying the homogenization error, we use following notation to measure the errors:
\[
\begin{aligned}
\mathbf{ERR_0} &\coloneqq \Norm{u_\epsilon - u_0}_{0, \Omega}/ \Norm{u_0}_{0,\Omega} \\
\mathbf{ERR_1} &\coloneqq \SquareBrackets{\sum_i\Norm{\partial_i u_\epsilon - \partial_i u_0 -(\partial_i N_l)^\epsilon \partial_l u_0}_{0, \Omega}^2}^{1/2} / \Seminorm{u_0}_{1, \Omega} \\
\mathbf{ERR_2} &\coloneqq \Seminorm{u_\epsilon - u_0}_{1,\Omega}/ \Seminorm{u_0}_{1,\Omega}
\end{aligned} ~~ .
\]
We set $\alpha=0.5, f=1.0, g=1.0, \kappa_1=1.0, \kappa_2 = 2.0$ and list the results in \cref{tab:1}.
\begin{table}[htbp] 
	\caption{The results of numerical experiments} \label{tab:1}
	\centering
	\begin{tabular}{cccc}
		\toprule
		~ & $\mathbf{ERR_0}$ & $\mathbf{ERR_1}$ & $\mathbf{ERR_2}$ \\
		\midrule
		$N=16, M=128$ & $0.00328$ & $0.00737$ & $0.21898$ \\
		$N=32, M=64$ & $0.00164$ & $0.00480$ & $0.21899$ \\
		$N=64, M=32$ & $0.00082$ & $0.00331$ & $0.21886$ \\
		$N=128, M=16$ & $0.00049$ & $0.00233$ & $0.21843$ \\
		Convergence rate & $0.92$ & $0.55$ & - \\
		\midrule
		$N=32, M=128$ & $0.00164$ & $0.00480$ & $0.21904$ \\
		\bottomrule
	\end{tabular}
\end{table}
From this table, the numerical convergence rate is actually close to its theoretical value $1.0$ and $0.5$, some differences may credit to that grid get coarser as heterogeneity or $1/\epsilon$ increase. In the last row of the table, we show the case $N=32, M=128$ as a compare to $N=32, M=64$, and the difference is few. This means that the $\mathbf{ERR_0}, \mathbf{ERR_1}$ and $\mathbf{ERR_2}$ we compute are accurate enough when $M$ is not small. Because of the limitation on computation resource, further and refined experiments such as $N=64, M=128$ or $N=128, M=128$ do not get conducted. We also notice that $\mathbf{ERR_2}$ do not decrease, This observation convinces the necessity of using First-order asymptotic solution.  

\section{Conclusions}
To model real scientific or engineering problems, Only studying the Dirichlet or Neumann boundary conditions is not completely adequate, and the situation has been encountered commonly in contact problems. Hence, the study on more suitable boundary condition is needed. A hemivariational inequalities framework for contact problems has been developed and also proved to be effective. Many physical and mechanical phenomena occur in highly heterogeneous media, and the simplified occasion is setting the coefficients of governing PDEs to have small periodicity. Contact problems in small periodicity setting have two major difficulties: one comes from nonlinearity in hemivariational inequalities, and the other originates from high oscillation due to multiscale property.

In this paper, several relatively reasonable assumptions are postulated to make the problems well posed, and a homogenization theorem is obtained by div-curl lemma. The key part is to derive $O(\epsilon^{1/2})$ estimation, and this result quantifies the convergence rate for first order expansion. Then, a computational method is proposed, and its numerical accuracy is also analyzed in FEM framework. We examine the special case--- Rubin problem and find out that an optimal $L^2$ estimation is obtainable.

It should be emphasized that, direct computational methods will cost enormous resources because of nonlinearity and high heterogeneity in this problem. It leads to the development of specialized computational methods. A thorough comparison of these two approaches and nontrivial numerical experiments will be more persuasive, and it will be provided in the future work.

\appendix

\begin{lemma}
	Let $\Omega$ be a Lipschitz domain and $\Gamma$ its boundary. Then there exists a constant $C(\Omega)$, such that $\forall \psi \in H^1(\Omega)$:
	\[
	C(\Omega) \Norm{\psi}_{1,\Omega}^2 \leq \int_{\Omega} \Seminorm{\nabla \psi}^2 + \int_{\Gamma} \psi^2 .
	\]
\end{lemma}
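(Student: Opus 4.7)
The plan is to argue by contradiction using a standard compactness argument. Suppose no such constant $C(\Omega)$ exists. Then I can extract a sequence $\{\psi_n\} \subset H^1(\Omega)$ with the normalization $\Norm{\psi_n}_{1,\Omega} = 1$ and
\[
\int_\Omega \Seminorm{\nabla \psi_n}^2 + \int_\Gamma \psi_n^2 \longrightarrow 0.
\]
This would violate the desired inequality for every candidate constant $C$.

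First, I would exploit the compact embedding $H^1(\Omega) \hookrightarrow\hookrightarrow L^2(\Omega)$ (Rellich--Kondrachov, which holds on Lipschitz domains). Passing to a subsequence, $\psi_n \to \psi$ strongly in $L^2(\Omega)$ and weakly in $H^1(\Omega)$ for some $\psi \in H^1(\Omega)$. Since $\int_\Omega \Seminorm{\nabla \psi_n}^2 \to 0$, the full sequence $\nabla \psi_n$ converges to $\bm 0$ in $L^2(\Omega)^d$, which upgrades the convergence to $\psi_n \to \psi$ strongly in $H^1(\Omega)$. In particular $\nabla \psi = \bm 0$ a.e., so on the connected components of $\Omega$ the limit $\psi$ is constant.

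Next, I would use the continuity of the trace operator $\gamma: H^1(\Omega) \to L^2(\Gamma)$ to conclude
\[
\int_\Gamma \psi^2 = \lim_{n\to\infty}\int_\Gamma \psi_n^2 = 0,
\]
so $\psi = 0$ on $\Gamma$ in the trace sense. Combined with the fact that $\psi$ is piecewise constant on components of $\Omega$ (and every component of a Lipschitz domain has boundary touching $\Gamma$ with positive $(d-1)$-measure), this forces $\psi \equiv 0$. But strong convergence then gives $\Norm{\psi}_{1,\Omega} = \lim \Norm{\psi_n}_{1,\Omega} = 1$, a contradiction.

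The main obstacle is purely bookkeeping: ensuring that the argument that $\psi = 0$ on a set of positive surface measure forces the constant $\psi$ to vanish on each connected component. If $\Omega$ is connected this is immediate; otherwise one applies the argument componentwise, using that each component of a bounded Lipschitz domain has boundary of positive Hausdorff measure on $\Gamma$. No delicate estimate is required — the whole proof is the standard compactness/contradiction trick, so I would not belabor any computation beyond citing Rellich--Kondrachov and the continuity of the trace.
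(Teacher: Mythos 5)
Your proof is correct and follows essentially the same compactness/contradiction argument as the paper. You handle the boundary term a bit more cleanly by upgrading to strong $H^1(\Omega)$ convergence and then invoking trace continuity directly, whereas the paper runs a weak-convergence $\liminf$ argument on $\int_\Gamma\psi_n^2$; you also explicitly flag the disconnected-domain case, which the paper tacitly suppresses.
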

\begin{proof}
	If not, we have a sequence of $\{ \psi_n\}$ with $\Norm{\psi_n}_{1,\Omega}=1$ and $\int_{\Omega} \Seminorm{\nabla \psi_n}^2 + \int_{\Gamma} \psi_n^2 \leq 1/n$. Up to a subsequence, we will have:
	\[
	\left\{
	\begin{aligned}
	& \psi_n \rightarrow \psi_0 ~~~~ \text{ in } ~~ L^2(\Omega) \\
	& \nabla \psi_n \rightharpoonup \nabla \psi_0 ~~~~ \text{ in } ~~ L^2(\Omega)^d
	\end{aligned} ~~ .
	\right.
	\]
	Since $\int_{\Omega} \Seminorm{\nabla \psi_n}^2 \leq 1/n$, we obtain $\nabla \psi_n \rightarrow \bm{0}$ in $L^2(\Omega)^d$. We now have $\nabla \psi_0 \equiv \bm{0}$, and $\psi_0 \equiv C$. By $\int_\Omega \psi_0^2=\lim\limits_{n}\int_{\Omega} \psi_n^2 = 1-\lim\limits_{n}\int_{\Omega} \Seminorm{\nabla \psi_n}^2=1$, we know $C \neq 0$. Due to trace theorem, $\FuncAction{u}{\phi}=\int_{\Gamma}u\phi$ is a bounded functional on $H^1(\Omega)$ for any $u \in H^1(\Omega)$. Use weak convergence we have $\int_{\Gamma} \psi_0^2 = \lim\limits_{n} \int_{\Gamma} \psi_0 \psi_n \leq \Norm{\psi_0}_{0,\Gamma} \liminf_n \Norm{\psi_n}_{0,\Gamma}$, and 
	\[
	0 = \lim\limits_{n} \Seminorm{\nabla \psi_n}^2 + \int_{\Gamma} \psi_n^2 \geq \liminf_n \int_{\Gamma} \psi_n^2 \geq \int_{\Gamma} \psi_0^2 = \int_{\Gamma} C^2 ,
	\]
	and this contradicts $C\neq 0$.
\end{proof}

\begin{proposition}
	Suppose that $\Omega$ has $C^{1,1}$ boundary. In addition to uniformly ellipticity, coefficients $A(x)=\SquareBrackets{A_{ij}}_{1\leq i,j \leq d}$ are in $C^{0,1}(\Omega)$, and $\alpha(\bm{x})$ is $C^{0,1}(\Gamma)$ (in the sense of local coordinate). Then $\forall f \in L^2(\Omega)$, $u$ is the solution of Robin problem:
	\begin{equation}
	\left\{
	\begin{aligned}
	-\Div(A \nabla u) = f  \\
	\bm{n}\cdot A \nabla u + \alpha(\bm{x})u = 0
	\end{aligned} .
	\right.
	\end{equation}
	Then $u \in H^2(\Omega)$ with estimation $\Norm{u}_{2,\Omega} \leq C \Norm{f}_{0,\Omega}$, here $C$ depends on $\Omega, \Norm{A_{ij}}_{C^{0,1}(\Omega)}, \Norm{\alpha}_{C^{0,1}(\Gamma)}$ and $\kappa_1, \kappa_2$.	
\end{proposition}
\begin{proof}
	By flatting boundary technique, we only need to consider a half sphere $B_1^+$ in $\R^d_+$ as domain, and plane $T=\{\bm{x}\in \R^d \colon x_d=0\}$ as boundary where Robin condition imposed. The original problem can locally transfer to following form:
	\[
	\int_{B_1^+} A \nabla u \cdot \nabla v + \int_T \alpha u v = \int_{B_1^+} f v ~~~~ \forall v \in C^\infty(B_1^+) \text{ s.t. } \mathrm{dist}\Brackets{\Supp(v), \partial B_1^+ \cap \R^d_+ } > 0 .
	\]
	Take differential quotient $\Delta^h=\Delta^h_k$, and by a similar process we will get a result as in \cite{Gilbarg1983} sect. 8.3, here $k\neq d$:
	\[
	\begin{aligned}
		&\int_{B_1^+} A(\bm{x}+h\bm{e}_k) \nabla(\Delta ^h u) \cdot \nabla v + \int_T \alpha(\bm{x}+h\bm{e}_k) (\Delta ^h u) v \\
		\leq & \Brackets{\Norm{f}_{0, B_1^+}+\Norm{A}_{C^{0,1}(B_1^+)}\Norm{\nabla u}_{0, B_1^+}} \Norm{\nabla v}_{0, B_1^+} + \Norm{\alpha}_{C^{0,1}(T)}\Norm{u}_{0, T}\Norm{v}_{0, T}
	\end{aligned}.
	\]
	Take $\eta$ as cut-off function which $\eta \equiv 1$ in $B_{1/2}^+$ and $\eta \equiv 0$ in $B_{1}^+\setminus B_{3/4}^+$, then substitute $\eta^2 \Delta ^h u$ for $v$ into above inequality. We have:
	\[
		\int_{B_{1}^+} \Seminorm{\eta \nabla \Delta ^h u}^2 + \int_T \Seminorm{\eta \Delta ^h u}^2 \leq C \Brackets{\Norm{u}_{1, B_{1}^+}+\Norm{f}_{0, B_{1}^+}}.
	\]
	This implies $\Norm{\partial_{ij}u}_{0, B_{1/2}^+} \leq C\Brackets{\Norm{u}_{1, B_{1}^+}+\Norm{f}_{0, B_{1}^+}}$ for $i \neq d$ or $j \neq d$. Recall 
	\[
		f = -a_{dd} \partial_{dd} u - \sum_{i \neq d \text{ or } j \neq d} a_{ij} \partial_{ij} u - \sum_{ij} \partial_i a_{ij} \partial_j u
	\]
	It then follows $\Norm{\partial_{dd}u}_{0, B_{1/2}^+} \leq C\Brackets{\Norm{u}_{1, B_{1}^+}+\Norm{f}_{0, B_{1}^+}}$. Finally, we have $\Norm{u}_{2, B_{1/2}^+} \leq C \Norm{f}_{0, B_{1}^+}$.
\end{proof}


\bibliographystyle{plain}
\bibliography{references}

\end{document}